\newcommand{\newsiamassump}[2]{
  \theoremstyle{plain}
  \theoremheaderfont{\normalfont \bf}
  \theorembodyfont{\normalfont}
  \theoremseparator{}
  \theoremsymbol{}
  \newtheorem{#1}{#2}
}
\newcommand{\newsiamprob}[2]{
  \theoremstyle{plain}
  \theoremheaderfont{\normalfont \bf}
  \theorembodyfont{\normalfont}
  \theoremseparator{:}
  \theoremsymbol{}
  \newtheorem{#1}{#2}
}
\newcommand{\veryshortarrow}[1][3pt]{\mathrel{%
   \hbox{\rule[\dimexpr\fontdimen22\textfont2-.2pt\relax]{#1}{.4pt}}%
   \mkern-4mu\hbox{\usefont{U}{lasy}{m}{n}\symbol{41}}}}
\newcommand{\scriptveryshortarrow}[1][3pt]{{%
    \hbox{\rule[\scriptratio\dimexpr\fontdimen22\textfont2-.2pt\relax]
               {\scriptratio\dimexpr#1\relax}{\scriptratio\dimexpr.4pt\relax}}%
   \mkern-4mu\hbox{\let\f@size\sf@size\usefont{U}{lasy}{m}{n}\symbol{41}}}}
\newcommand{\fromsource}{_{\mathrm{s}\veryshortarrow}}
\newcommand{\todest}{_{\veryshortarrow\mathrm{d}}}
\newcommand{\source}{x_{\textrm{src}}}
\newcommand{\dest}{x_{\textrm{dst}}}
\newcommand{\sourceset}{\mathcal{K}_{\textrm{src}}}
\newcommand{\destset}{\mathcal{K}_{\textrm{dst}}}
\newcommand{\optsourceset}{\mathcal{X}_{\textrm{src}}}
\newcommand{\optdestset}{\mathcal{X}_{\textrm{dst}}}
\newcommand{\Fine}{\text{\sc Fine}\xspace}
\newcommand{\Coarsegrid}{\text{\sc Coarse-grid}\xspace}
\newcommand{\Far}{\text{\sc Far}\xspace}
\newcommand{\Accepted}{\text{\sc Accepted}\xspace }
\newcommand{\Narrow}{\text{\sc NarrowBand}\xspace } 
\newcommand{\Active}{\text{\sc Active}\xspace }
\newcommand{\Start}{\text{\sc Start}\xspace } 
\newcommand{\End}{\text{\sc End}\xspace } 
\newcommand{\lowerboundf}{\underline{f}}
\newcommand{\upperboundf}{\overline{f}}
\newcommand{\spacecom}{\mathcal{C}_{spa}}
\newcommand{\computcom}{\mathcal{C}_{comp}}
\newcommand{\R}{\mathbb{R}}
\newcommand{\A}{{\mathcal A}}
\newcommand{\Omegab}{\overline{\Omega}}
\newcommand{\F}{{\mathcal F}}
\newcommand{\Cc}{{\mathcal C}}
\newcommand{\NEW}[1]{{\em #1}}
\newcommand{\Oc}{\mathcal{O}}
\newcommand{\gT}{\partial_{\mathrm t} \Oc}
\newcommand{\Oetab}{\overline{\mathcal{O}}_{\eta}}
\newcommand{\Oeta}{\mathcal{O}_{\eta}}
\newcommand{\Od}{\mathcal{O}}
\newcommand{\Oetamu}{\mathcal{O}_{\eta}^\mu}
\newcommand{\Oetabmu}{\overline{\mathcal{O}_{\eta}^{\mu}}}
\newcommand{\etaH}{\eta_H}
\newcommand{\epsH}{\varepsilon_H}
\newcommand{\argmin}{\operatorname{Argmin}}
\newcommand{\myfrac}[2]{{#1}/{#2}}
\title{A multilevel fast-marching method For The Minimum time problem\thanks{Submitted \today.
	}}
\author{Marianne Akian\thanks{Inria and CMAP, \'Ecole polytechnique, IP Paris, CNRS
    (\email{Marianne.Akian@inria.fr, Stephane.Gaubert@inria.fr}).}
  \and St\'ephane Gaubert\footnotemark[2]
	\and Shanqing LIU\thanks{CMAP, \'Ecole polytechnique, IP Paris, CNRS, and Inria
		(\email{Shanqing.Liu@polytechnique.edu}). }}
\begin{document}
 
\maketitle

\begin{abstract}
	We introduce a new numerical method to approximate the solutions of a class of stationary Hamilton-Jacobi (HJ) partial differential equations arising from minimum time optimal control problems. We rely on nested grid approximations, and look for the optimal trajectories by using
	the coarse grid approximations to reduce the search space %
	in fine grids.
	This provides an infinitesimal version of the ``highway hierarchy'' method which has been developed to solve shortest path problems
(with discrete time and discrete state).
	We obtain, for each level, an approximate value function on a sub-domain of the state space. 
        We show that the sequence %
	obtained in this way does converge to the viscosity solution of the HJ equation.
Moreover, for our multi-level algorithm, if $0<\gamma \leq 1$ is the convergence rate of the classical numerical scheme, then the number of arithmetic operations needed to obtain an error in $O(\varepsilon)$ is in  $\widetilde{O}(\varepsilon^{-\theta })$, with $\theta< \frac{d}{\gamma}$,
to be compared with $\widetilde{O}(\varepsilon^{-d/ \gamma})$ for ordinary grid-based methods. 
Here $d$ is the dimension of the problem, $\theta $ depends on 
$d,\gamma$ and on the ``stiffness" of the value function around optimal trajectories, and the notation $\widetilde{O}$ ignores logarithmic factors. 
In particular, in typical smooth cases, one has $\gamma=1$ and $\theta=(d+1)/2$. 
\end{abstract}

\begin{keywords}
	Hamilton-Jacobi equations, minimum time, fast marching, eikonal equation, state constraints, curse-of-dimensionality
\end{keywords}

\begin{MSCcodes}
	49LXX, 49M25, 65N22, 65N50, 65N55, 65Y20  
\end{MSCcodes}

\maketitle
\section{Introduction}
\subsection{Motivation and context}
We consider a class of optimal control problems, consisting of finding the minimum traveling time between two given sets in a given domain.
 Such optimal control problems are associated to a stationary Hamilton-Jacobi (HJ) equation via the Bellman dynamic programming principle  (see for instance~\cite{flemingsoner}). In particular, the value function is characterized as the solution of the associated HJ equation in the viscosity sense. 
 Problems with state constraints can be addressed with the notion of constrained viscosity solution~\cite{soner1986optimal1}. 

Various classes of numerical methods have been proposed for these problems. 
The \textit{Finite difference schemes} are based on a direct discretization of the HJ equation, see for instance~\cite{Crandall1984TwoAO}. High order schemes, such as ENO and WENO, have been developed~\cite{shu1988efficient,jiang1996efficient}. %
The \textit{Semi-Lagrangian schemes}, as in \cite{falcone2013semi}, arise by applying the Bellman dynamic programming principle to the discrete time optimal control problem obtained after an Runge-Kutta time-discretization of the dynamics.
Other numerical methods, using approximations on grids, have been developed, including the \textit{Discontinuous Galerkin schemes} (as in~\cite{bokanowski2014discontinuous}) and  the \textit{Finite element schemes} (as in~\cite{jensen2013convergence, smears2016discontinuous}). 
Max-plus based discretization schemes have also been deeloped~\cite{fleming2000max, akian2008max}. In some of these cases, the discretized system of equations can be interpreted as the dynamic programming equation of a deterministic or stochastic optimal control problem \cite{kushner2001numerical} with discrete time and state space.

Traditional numerical methods apply iterative steps to solve the discretized stationary HJ equation, for instance value iteration or policy iteration. At each step, the value function is computed in the {whole} discretization grid. 
In the particular case of the shortest path problem (with discrete time and state space), with a nonnegative cost function,
one can solve the stationary dynamic programming equation by Dijkstra's algorithm.
The fast marching method introduced by Sethian~\cite{sethian1996fast} and by Tsitsiklis~\cite{tsitsiklis1995efficient}, then further developed in particular in~\cite{sethian2003ordered,  cristiani2007fast, carlini2011generalized, clawson2014causal,Mir14a,mirebeau2019riemannian}, provides an analogue of Dijkstra's method for the HJ equation
of shortest path problems
with {\em continuous} time and state; it relies on a monotone finite difference or semi-lagrangian discretization scheme.
It is called "single pass" because at every point of the discretization grid, the value is computed at most $k$ times, where $k$ is a bound not related to the discretization mesh.
This approach was initially developed to solve the front propagation problem, then extended to more general stationary Hamilton-Jacobi equations~\cite{sethian2003ordered}. It takes advantage of the property that the evolution of the region behind a ``propagation front'' is monotonely increasing, the so called the ``causality'' property.
Fast marching remains a grid-based method, and hence still suffers from the "curse of dimensionality".
Indeed, the number of grid nodes grows exponentially with the dimension $d$, making the reading, writing, storing and computation untractable even on modern computers. 
Several types of discretizations or representations have been developed recently to overcome the curse of dimensionality for HJ equations. One may cite the sparse grids approximations%
~\cite{zbMATH06176972},  %
the tensor decompositions%
~\cite{zbMATH07364328,zbMATH07547920}, %
the deep learning techniques~\cite{zbMATH07508503,bokanowski2023neural},
the curse-of-dimensionality free
max-plus numerical method of McEneaney \cite{%
	Mc2007}, 
and the Hopf formula based method of~\cite{zbMATH06626046},
apply to problems with specific structures.

Another way to overcome the curse of dimensionality is to replace the general problem of solving the HJ equation by the one of computing the optimal trajectories between two given points. The latter problem can be solved, under some convexity assumptions, by the Pontryagin Maximum Principle approach (see~\cite{raymond1998pontryagin} and the reference there).  %
Another method is the stochastic dual dynamic programming (SDDP), see~\cite{girardeau2015convergence} and reference there, in which the value function is approximated by a finite supremum of affine maps, and thus can be computed efficiently by linear programming solvers. In the absence of convexity assumptions, these methods may only lead to a local minimum. In that case, more recent methods consist in exploiting the structure of the problem, in order to reduce the set of possible trajectories among which the optimization is done, for instance in~\cite{alla2019efficient,bokanowski2022optimistic}.

This work is inspired by the recent development of "Highway Hierarchies"~\cite{delling2006highway, sanders2012engineering} for the (discrete) shortest path problems. Highway hierarchies allow one to accelerate Dijkstra's algorithm by computing multi-level aggregated graphs using ``highways''  in which the optimal paths should go through. This provides the exact shortest path between two given points, in a much shorter time.
Highway hierarchies are specially useful when numerous queries have to be solved quickly, on a fixed graph
(a typical use case is GPS path planning). Acceleration methods for Dijkstra's algorithm were also used for continuous shortest path problems in~\cite{peyre2008heuristically, clawson2014causal}, in order to accelerate fast marching method.

\subsection{Contribution}
In this paper, we intend to find the optimal trajectories between two given sets, for the minimal time problem. 
Our approach combines the idea of highway hierarchy with the one of multi-level
grids, in order to reduce the search space. 
We compute approximate ``geodesics'' by using a coarse grid.
Then, we perform a fast marching in a finer grid, restricted to a neigborhood of the highways. This method is iterated, with finer and finer grids, and smaller
and smaller neighborhoods, until the desired accuracy is reached. 
It can be seen as a highway hierarchy method for the eikonal equation, comparing geodesic with highways and multi-level grids with multi-level graphs. 

To carry out our analysis, we suppose that the fast marching method with a mesh step $h$ has an error in $O(h^\gamma)$, and the same bound remains valid under suitable restrictions of the state space (\Cref{assup_error}). We show (in~\Cref{theo-corr-multi}) that the final approximation error for the value function, restricted to a neighborhood
of the optimal trajectory,
is as good as the one obtained by discretizing directly the whole domain with the finest grid. 
Moreover, we show (in~\Cref{complexity_1}) that the number of  elementary operations and the size of the memory needed to get an error of $\varepsilon$
are considerably reduced. 
Indeed, %
recall that the number of arithmetic operations
of fast marching methods
is in $\widetilde{O}(\varepsilon^{-\frac{d}{\gamma}}K_d)$,
where $d$ is the dimension of the problem, 
$K_d \in [2d,L^d]$ for some constant $L$ depending on the 
diameter of discrete neighborhoods 
and the notation $\widetilde{O}(\cdot)$ ignores the logarithmic factors (see~\Cref{sec-conv}).
For our multi-level method, with suitable parameters,
the number of arithmetic operations
is in $\widetilde{O}(C^d \varepsilon^{-\theta})$, where $\theta < \frac{d}{\gamma}$ further depends on a geometric parameter $0< \beta \leq 1$, defined in \Cref{distance_beta} which measures the ``stiffness" of the value function around optimal trajectories. 
In typical situations in which the value
function is smooth with a nondegenerate Hessian in the neighborhood
of an optimal trajectory, one has $\beta=1/2$ and $\theta = \frac{1}{2}+ (1 - \frac{\gamma}{2})\frac{d}{\gamma}$. 
In some exceptional cases, we have $\beta=\gamma=1$, and then the complexity bound reduces to %
$\widetilde{O}(C^d\varepsilon^{-1}) $, which is of the same order as for 
one dimensional problems. 
The theoretical complexity bounds are consistent with 
the complexity observed in numerical experiments.
However, as said above, our theoretical bounds
hold under \Cref{assup_error},
requiring the error of the fast marching method to be uniform
when the domain varies in a certain class,
whereas standard results suppose that the domain is fixed and focus on the order of convergence. The issue of the explicit dependence of this convergence in the shape of the domain, for state constraints problems, seems to have been little  explored, see the discussion in~\Cref{remark_error}. This issue
is of independent interest, and we leave it as an open question.

This paper is organized as follows. In \Cref{Pre}, we give preliminary results on the HJ equation and the minimum time optimal control problem. In \Cref{sec-cotra}, we establish key results justifying the restriction of the state space to a neighborhood of optimal trajectories. 
In \Cref{sec-algo}, we describe our algorithm and establish
its correctness.
Complexity estimates are given in \Cref{sec-conv}. 
Numerical examples are given in~\Cref{sec-test}. 

\section{Hamilton-Jacobi equation for the Minimum Time Problem} \label{Pre}

\subsection{The Minimum Time Problem }
Let $\Omega$ be an open, bounded domain in $\R^{d}$. Let $S_{1}$ be the unit sphere in $\R^{d}$, i.e., $S_{1} = \{ x \in \R^{d}, \| x\| =1  \} $ where $\|\cdot\|$ denotes the Euclidean norm.
Let $\mathcal{A} = \{ \alpha : [0,\infty) \mapsto S_{1} : \alpha(\cdot) \text{ is measurable} \}  $ denote the set of controls, every $\alpha \in \A$ is then the unit vector determining the direction of motion. 
We denote by $f$ the speed function, and assume the following basic regularity properties:
\begin{assumption} \label{assp1}$ $
      \begin{enumerate}
      \item $f : \Omegab\times S_1 \mapsto (0,\infty)$  is continuous.	
      \item $f$ is bounded on $\Omegab\times S_1$, i.e., $\exists M_f >0$ s.t. $\|f(x,\alpha) \| \leq M_f$, $\forall x \in \Omegab, \forall \alpha \in S_1$.
      \item There exists constants $L_{f}, L_{f,\alpha}>0$ s.t. $ |f(x,\alpha) - f(x^{'},\alpha)| \leq L_{f}| x - x^{'} |, \forall \alpha \in S_1, \forall x, x^{'}\in \Omega$ and  $| f(x,\alpha) - f(x,\alpha^{'})| \leq L_{f,\alpha}|\alpha - \alpha^{'}|, \forall x \in \Omega, \forall \alpha, \alpha^{'} \in S_1$.
      \end{enumerate}
\end{assumption}
It's worth noting that the original fast marching method presented in~\cite{tsitsiklis1995efficient,sethian1996fast} focuses on cases where $f(x,\alpha) = f(x)$, meaning the speed function does not depend on the direction, a property known as ``isotropy''.  
Furthermore,~\Cref{assp1} applies to both Riemannian and Finslerian geometry but excludes sub-Riemannian systems and, more broadly, the dynamics of nonholonomic systems.

Let $\sourceset$ and $\destset$ be two disjoint compact subsets of $\Omega$ 
(called the \NEW{source} and the \NEW{destination} resp.).
Our goal is to find the minimum time necessary to travel from $\sourceset$ to $\destset$, and the optimal trajectories between $\sourceset$ and $\destset$, together with the optimal control $\alpha$. 
So, we consider the following optimal control problem:
\begin{equation}\label{problem}
	\begin{aligned}
		&\inf \ \tau \geq 0\qquad 
		s.t. \left\{
			\begin{aligned}
				& \dot{y}(t) = f(y(t),\alpha(t))\alpha(t), \ \forall t\in[0,\tau] \enspace , \\
				& y(0)\in\sourceset, \ y(\tau)\in\destset \enspace, \\
				& y(t) \in \overline{\Omega}, \ \alpha(t) \in \A, \ \forall t \in [0,\tau] \enspace .
			\end{aligned} \right.
	\end{aligned}
\end{equation}
\subsection{HJ Equation for the Minimum Time Problem}\label{subsec-hj1} A well known sufficient and necessary optimality condition for the above problem is given by the Hamilton-Jacobi-Bellman equation, which is deduced from the dynamic programming principle. Indeed, one can consider the following controlled dynamical system:  
\begin{equation}
\label{dynmsys}
\dot{y} (t) = f(y(t), \alpha(t)) \alpha(t), \ \forall t \geq 0 \ ,\qquad
y(0) = x \ .
\end{equation}
We denote by $y_{\alpha}(x;t)$ the solution of
\eqref{dynmsys} with $\alpha \in \mathcal{A}$ 
and $y_\alpha(x;s) \in \overline{\Omega}$ for all $0\leq s \leq t$. More precisely, we restrict the set of control trajectories so that the state $y$ stays inside the domain $\overline{\Omega}$, considering the following set of controls: 
\begin{equation}\label{controlset}
	\mathcal{A}_{\Omega,x} := \{ \alpha \in \mathcal{A} \ | \  y_\alpha(x;s) \in \overline{\Omega}, \text{ for all }  s \geq 0  \} \enspace .
\end{equation}
 By~\Cref{assp1}, $\A_{\Omega,x} \neq \emptyset$. In other words, the structure of the control set $A_{\Omega,x}$ is adapted to the state constraint ``~$y\in \overline{\Omega}$~''. 
Let us define the cost functional by: 
\begin{equation}\label{cost-func}
J\todest(\alpha(\cdot),x) = \inf \{ \tau \geq 0\mid y_{\alpha}(x;\tau) \in \destset \} \enspace,
\end{equation}
in which "$\todest$" stands for ``arrival to destination''. The value function $T\todest : \bar{\Omega} \mapsto \R \cup \{+\infty\}$ is defined by %
\begin{equation}\label{Mintime}
T\todest(x) = \inf_{\alpha \in \mathcal{A}_{\Omega,x}} J\todest(\alpha(\cdot),x) \ . %
\end{equation}
Then, restricted to $\overline{\Omega \setminus \destset}$, informally $T\todest$ is a solution of the following state constrained Hamilton-Jacobi-Bellman equation:
\begin{equation} \label{hjes1}
\left\{
\begin{aligned}
&-(\min_{\alpha \in S_1} \{ (\nabla T\todest(x) \cdot \alpha) f(x,\alpha) \} +1) =0, \enspace & x\in \Omega \setminus \destset \ ,\\
&T\todest(x) = 0 \enspace \enspace & x\in \partial \destset \ .
\end{aligned}
\right.
\end{equation}
In order to relate the minimum time function and the HJB equation,  
 we use the following definition  for 
the viscosity solution of the following state constrained HJ equation 
with continuous hamiltonian $F: \R^d\times \R\times \R^d\to \R$,
open bounded domain $\Oc\subset \R^d$, 
and ``target'' $\gT\subset \partial \Oc$: 
\begin{equation}\label{constrainHJ}
	\tag{\text{$SC(F,\Oc,\gT )$}}
	\left\{
	\begin{aligned}
	& F(x, u(x), Du(x)) = 0, \enspace & x \in \Oc\ , \\
	&F(x, u(x), Du(x)) \geq 0, \enspace & x \in \partial\Oc\setminus(\gT)
 \ ,  \\
	&u(x) = 0, \enspace & x \in \gT \ .
	\end{aligned}
	\right.
	\end{equation}
When there is no target set, that is $\gT=\emptyset$,
this definition corresponds to the one
introduced first by Soner in~\cite{soner1986optimal1}
(see also \cite{bardi2008optimal}), the case with a nonempty
closed target set $\gT$
is inspired by the results of \cite{capuzzo-lions}.
\begin{definition}[compare with \protect{\cite{soner1986optimal1,bardi2008optimal,capuzzo-lions}}]
\label{constraint_vis} 
	Let $u: \overline{\Oc} \to \R$ be continuous.
	\begin{enumerate}
		\item The function $u$ is a viscosity subsolution 
		of \eqref{constrainHJ} if %
		for every test function $\psi \in \Cc^{1}(\overline{\Oc} )$, for all local maximum points $x_0 \in \overline{\Oc}$ of the function $u-\psi$, we have
                $F(x_0,u(x_0),D\psi (x_0) ) \leq 0$
                if 
		$x_0\in \Oc$ and
                $u(x_0)\leq 0$
                if
		$x_0\in \gT$.
	      \item The function $u$ is a viscosity supersolution of \eqref{constrainHJ} if for every test function $\psi \in \Cc^{1}(\overline{\Oc})$, for all local minimum points $x_0 \in \overline{\Oc}$ of the function $u-\psi$, we have
                $F(x_0,u(x_0),D\psi (x_0) ) \geq 0$ if
		$x_0\not\in \gT$
                and $u(x_0)\geq 0$ otherwise.
		\item The function $u$ is a viscosity solution of \eqref{constrainHJ} if and only if it is a viscosity subsolution and supersolution of \eqref{constrainHJ}.
	\end{enumerate}
\end{definition}

A basic method in the studies of the above system (see \cite{vladimirsky2006static}, \cite[Chapter-IV]{bardi2008optimal}) is the change of variable:
\begin{equation} \label{changevar}
	v\todest(x) = 1 - e^{-T\todest(x)} \enspace,
\end{equation}
which was first used by Kruzkov~\cite{kruvzkov1975generalized}.
The function $v\todest(x)$ is bounded and Lipschitz continuous. The minimum time is recovered
by 
$T\todest(x) = - \log(1 - v\todest(x))$.  

In fact, consider a new control problem associated to the dynamical system \eqref{dynmsys}, and the discounted cost  functional defined by
\begin{equation}\label{disc-cost}
J\todest^{'}(\alpha (\cdot),x) = \inf \left\{ \int_{0}^{\tau} e^{-t} dt \mid \tau\geq 0 , \ y_{\alpha}(x;\tau) \in \destset  \right\} \enspace ,
\end{equation}
for $\alpha \in \mathcal{A}_{\Omega,x}$. Then, the value function $v$ of the control problem given by
\begin{equation}\label{value-disc}
v (x) = \inf_{\alpha \in \mathcal{A}_{\Omega,x}} J\todest^{'}(\alpha(\cdot),x)
\end{equation}
 coincides with $v\todest$ in \eqref{changevar}.
Define now 
\begin{equation}
\label{defF}
F(x,r,p) = -\min_{\alpha \in S_{1}} \{ p \cdot f(x,\alpha)\alpha + 1 - r \}
\enspace .\end{equation}
 This Hamiltonian corresponds to the new control problem {\rm (\ref{dynmsys},\ref{disc-cost},\ref{value-disc})}, and the restriction of the value function $v\todest$ to $\overline{\Omega \setminus \destset}$ is a viscosity solution of the state constrained HJ equation $SC(F,\Omega\setminus \destset , \partial\destset)$.

The uniqueness of the solution of Equation $SC(F,\Omega\setminus \destset , \partial\destset)$ in the viscosity sense and the equality of this solution with the value function need not hold if the boundary condition is not well defined. 
When the target set is empty,
Soner \cite{soner1986optimal1} introduced sufficient conditions for the uniqueness of the viscosity solution
of \eqref{constrainHJ} and the equality with the corresponding value function.
One of these conditions involves the dynamics of the controlled process on 
$\partial \Oc$, the so-called ``inward-pointing condition'' (
see \cite[(A3)]{soner1986optimal1}), which is automatically
satisfied when $F$ is as in \eqref{defF}, and $f$ satisfies \Cref{assp1}
with $\Oc$ instead of $\Omega$. 
Similar conditions are proposed in \cite{capuzzo-lions}.
We state below the result of \cite{capuzzo-lions}
with the remaining conditions, and for a general open bounded domain $\Oc$,
instead of $\Omega$, 
as we shall need such a result in the sequel.

\begin{theorem}[Corollary of \protect{\cite[Th.\ IX.1, IX.3 and X.2]{capuzzo-lions}, see also \cite{soner1986optimal1}}]\label{th_soner}
Let $\Oc$ be an open domain of $\R^d$, let 
$\gT\subset \partial \Oc$ be compact, and assume that
 $\partial \Oc\setminus \gT$ is of class $\Cc^1$.
Let $F$ be as in \eqref{defF} with $f$ satisfying \Cref{assp1}
with $\Oc$ instead of $\Omega$.
Then the comparison principle holds for $SC(F,\Oc , \gT)$, i.e., any viscosity subsolution is upper bounded by any viscosity supersolution. In particular, the viscosity solution is unique. Moreover, it coincides with 
the value function $v\todest$
in \eqref{value-disc} of the optimal control problem with dynamics 
\eqref{dynmsys} and
criteria \eqref{disc-cost},
in which $\Omega$ and $\destset$ are replaced by $\Oc$ and $\gT$, respectively.
\end{theorem}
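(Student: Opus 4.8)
The plan is to verify that the Hamiltonian $F$ of \eqref{defF}, together with the domain $\Oc$ and target $\gT$, satisfies the structural hypotheses under which the comparison and representation theorems of \cite{capuzzo-lions} apply, and then to quote those theorems. First I would record the algebraic consequences of the explicit form of $F$. Since the discount term $-r$ and the constant $1$ enter additively inside the minimum, one has $F(x,r,p) = r - 1 - \min_{\alpha\in S_1}\{p\cdot f(x,\alpha)\alpha\}$, so $F$ is affine and strictly increasing in $r$ with slope $1$; this is the strict monotonicity (properness) that makes the comparison argument a contraction in the discount variable. Using \Cref{assp1}, $f$ is continuous, bounded by $M_f$ and Lipschitz in $x$ uniformly in $\alpha$, which yields the continuity modulus $|F(x,r,p)-F(x',r,p)|\le L_f\,|x-x'|\,|p|$ and joint continuity of $F$. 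Finally, because $f$ is continuous and strictly positive on the compact set $\overline{\Oc}\times S_1$, it is bounded below by some $m_f>0$; choosing $\alpha=-p/|p|$ gives $-\min_{\alpha}\{p\cdot f(x,\alpha)\alpha\}\ge m_f|p|$, so $F$ is coercive in $p$, uniformly in $x$.

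Next I would treat the two parts of the boundary separately. On the state-constraint part $\partial\Oc\setminus\gT$, which is $\Cc^1$ by hypothesis, the decisive point is the inward-pointing (controllability) condition: at every such boundary point there is an admissible velocity $f(x,\alpha)\alpha$ pointing strictly into $\Oc$. This holds automatically here because $\alpha$ ranges over the full sphere $S_1$ and $f>0$, so taking $\alpha$ near the inward normal produces a strictly inward velocity; this is exactly the property that makes the supersolution inequality on $\partial\Oc\setminus\gT$ in \Cref{constraint_vis} the correct state-constraint condition and guarantees that the restricted control set is nonempty near the boundary. On the target $\gT$, which is compact, the Dirichlet datum $u=0$ together with the coercivity of $F$ provides the compatibility needed for the boundary value to be attained, matching the treatment of the Dirichlet part in \cite{capuzzo-lions}.

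With these verifications in hand, the comparison principle, and hence uniqueness, would follow from \cite[Th.~IX.1 and IX.3]{capuzzo-lions}, while the identification of the unique viscosity solution with the value function $v\todest$ of the control problem \eqref{dynmsys}, \eqref{disc-cost}, \eqref{value-disc} (with $\Omega,\destset$ replaced by $\Oc,\gT$) would follow from \cite[Th.~X.2]{capuzzo-lions}. Here the representation uses that $v\todest$ is a bounded Lipschitz subsolution and supersolution, the supersolution property along $\partial\Oc\setminus\gT$ again relying on the inward-pointing structure just established.

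I expect the main obstacle to be the boundary analysis at the junction between the state-constraint part $\partial\Oc\setminus\gT$ and the Dirichlet target $\gT$: one must check that the mixed boundary condition of \Cref{constraint_vis} is precisely the one covered by the cited theorems, and that there is no conflict between the constraint inequality and the prescribed value $0$ uniformly near $\partial\gT$. The secondary delicate point is making the continuity and coercivity moduli explicit enough that the hypotheses of \cite{capuzzo-lions} are met verbatim rather than only in spirit; once $F$ is rewritten in the affine-in-$r$, coercive-in-$p$, Lipschitz-in-$x$ form above, this last step should be routine.
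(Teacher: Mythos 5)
Your proposal is correct and takes essentially the same route as the paper: the paper gives no separate proof of this theorem, stating it as a direct corollary of the cited results of Capuzzo-Dolcetta and Lions, with the only substantive verification being the one you carry out — namely that the inward-pointing condition on $\partial\Oc\setminus\gT$ is automatic because the control ranges over the full sphere $S_1$ and $f>0$, and that $F$ has the required monotonicity in $r$, continuity, and coercivity in $p$. Your fleshed-out checking of these structural hypotheses is exactly the intended argument.
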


We should also mention the recent work of \cite{ bokanowski2011deterministic}, 
which characterized the value function of the state constrained problems without any
controllability assumptions.

Once $SC(F,\Omega\setminus \destset , \partial\destset)$ is solved, one can easily get the value of the original minimum time problem by computing the minimum of $v\todest(x)$ over $\sourceset$. We shall denote the set of minimum points by $\optsourceset$, i.e., 
Since $v\todest$ is continuous (by \Cref{th_soner}) and $\sourceset$ is compact,
we get that $\optsourceset$ is a nonempty compact set.
\subsection{HJ Equation in Reverse Direction}\label{sec_reverse} We shall also use an equivalent optimality condition for the minimum time problem \eqref{problem}, obtained by applying the dynamic programming principle in a reverse direction. 

Let us consider the following controlled dynamical system:
\begin{equation}
\label{dynmsys_t}
\dot{\tilde{y} } (t) = -f(\tilde{y}(t), \tilde{\alpha}(t)) \tilde{\alpha}(t), \ \forall t \geq 0 \ ,\qquad
\tilde{y}(0) = x \ .
\end{equation}
We denote by $\tilde{y}_{\tilde{\alpha}}(x;t)$ the solution of the above dynamical system \eqref{dynmsys_t} with $\tilde{\alpha}(t) = \alpha(\tau-t) \in \A$, for all $t\in[0,\tau]$. Then automatically $\tilde{y}(t) = y(\tau - t)$ with $y$ as in \eqref{dynmsys}.
We denote the state constrained control trajectories for this new problem by $\tilde{\A}_{\Omega,x}$:
\begin{equation}
	\tilde{\A}_{\Omega,x} = \{ \tilde{\alpha} \in \A \ | \ \tilde{y}_{\tilde{\alpha}}(x;s) \in \overline{\Omega}, \text{ for all } s \geq 0 \}\ .
\end{equation}
Consider the following cost functional and associated value function:
\begin{eqnarray}
\label{cost_s}
  &J\fromsource(\tilde{\alpha}(\cdot),x) = \inf\{ \tau \geq 0\mid \tilde{y}_{\tilde{\alpha}}(x;\tau) \in \sourceset \} \, ,\\
  &T\fromsource(x) =\inf_{\tilde{\alpha}\in \tilde{\A}_{\Omega,x}} J\fromsource(\tilde{\alpha}(\cdot),x) \in \R \cup \{+\infty\}\enspace,\nonumber
\end{eqnarray}
where "$\fromsource$" means ``from source''.
Using similar change of variable technique as above, we consider $v\fromsource(x) = 1 - e^{-T\fromsource(x)}$. This is the value function of a new optimal control problem with dynamics \eqref{dynmsys_t}, and is defined by 
\begin{equation}\label{value_s_v}
	v\fromsource(x) = \inf_{\tilde{\alpha}\in \tilde{\A}_{\Omega,x}} \inf \left\{ \int_{0}^{\tau} e^{-t} dt \mid \tau\geq 0,\ \tilde{y}_{\tilde{\alpha}}(x;\tau) \in \sourceset \right\} \ .
\end{equation}
Then, $v\fromsource$ is the viscosity solution of
the state constrained HJ equation $SC(\tilde{F},\Omega\setminus\sourceset,\partial \sourceset)$, where $\tilde{F}(x,r,p) = F(x,r,-p)$. %
By doing so, to solve the original minimum time problem \eqref{problem}, one can also solve the equation $SC(\tilde{F},\Omega\setminus\sourceset,\partial \sourceset)$ to get $v\fromsource$, 
and then compute the minimum of $v\fromsource(x)$ over $\destset$. We shall also denote by $\optdestset$ the set of minimum points, i.e. 
	 \(\optdestset = 
\argmin_{x \in \destset}  v\fromsource(x) 
\).
Again, as for $\optsourceset$,  we get that $\optdestset$ 
 is a nonempty compact set.

\section{Reducing the State Space of the Continuous Space Problem}\label{sec-cotra}
\subsection{Geodesic points}
We are interested in the set of points of optimal trajectories:
\begin{definition}
	For every $x\in \Omegab$, 
	We say that $y_{\alpha^{*}}(x;\cdot):[0,\tau] \mapsto \Omegab$ is an optimal trajectory with associated optimal control $\alpha^{*}$ for Problem 
	{\rm (\ref{dynmsys},\ref{disc-cost},\ref{value-disc})}, if the minimum in \eqref{value-disc} is achieved in $\alpha^{*}$. We denote by $\Gamma^{*}_x$ the set of {\em geodesic points} starting from $x$, i.e., 
	\[\Gamma^{*}_x = \{ y_{\alpha^{*}}(x;t) \ | \ t\in[0,\tau] , \ {\alpha}^* \text{ optimal } \}\enspace.\]
        Similarly,
        $\tilde{\Gamma}_x^{*}= \{ \tilde{y}_{\tilde{\alpha}^{*}}(x;t) \ | \ t \in [0,\tau], \ \tilde{\alpha}^* \text{ optimal } \}$ denotes the set of geodesic points starting from $x$ in the reverse direction,
        as per \Cref{sec_reverse}.
\end{definition}
\begin{proposition}\label{geodesicpoints}
We have
	$\cup_{x\in \optsourceset} \{ \Gamma^*_x \} = \cup_{x\in\optdestset} \{ \tilde{\Gamma}^*_x \}$, 
and if the latter set is nonempty, then
\( \inf_{x \in \sourceset} v\todest(x) = \inf_{x  \in \destset} v\fromsource(x)\), \( \inf_{x \in \sourceset} T\todest(x) = \inf_{x  \in \destset} T\fromsource(x)\). 
	\end{proposition}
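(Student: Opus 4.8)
The plan is to reduce everything to the original minimum time problem \eqref{problem} by exploiting the time-reversal correspondence between the forward dynamics \eqref{dynmsys} and the reverse dynamics \eqref{dynmsys_t}. Since $v=1-e^{-T}$ is a strictly increasing continuous bijection of $[0,+\infty]$ onto $[0,1]$, the maps $T\todest\mapsto v\todest$ and $T\fromsource\mapsto v\fromsource$ preserve both minimizers and order; hence $\optsourceset=\argmin_{x\in\sourceset}T\todest(x)$ and $\optdestset=\argmin_{x\in\destset}T\fromsource(x)$, the pair of infimum equalities for $v$ in the statement is equivalent to the single pair for $T$, and (because $\int_0^\tau e^{-t}\,dt=1-e^{-\tau}$ is increasing in $\tau$) the optimal trajectories of the discounted problem (\ref{dynmsys},\ref{disc-cost},\ref{value-disc}) coincide with the time-minimizing trajectories of \eqref{problem}. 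I will therefore work throughout with the minimum time $T$ and write $\tau^*$ for the optimal value of \eqref{problem}.

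First I would record the basic reversal fact already noted after \eqref{dynmsys_t}: a forward admissible trajectory $y_\alpha(\cdot)$ on $[0,\tau]$ with $y(0)=a$, $y(\tau)=b$ has, as its time reversal $\tilde y(t)=y(\tau-t)$, a reverse admissible trajectory with $\tilde y(0)=b$, $\tilde y(\tau)=a$, the same image in $\overline{\Omega}$, the same total time $\tau$, and (since the state constraint $\overline{\Omega}$ is symmetric in time) the same admissibility. This yields a time-preserving bijection between forward trajectories from $\sourceset$ to $\destset$ and reverse trajectories from $\destset$ to $\sourceset$. Taking infima across this bijection gives $\inf_{x\in\sourceset}T\todest(x)=\tau^*=\inf_{x\in\destset}T\fromsource(x)$, both being the least travel time in \eqref{problem}; this is exactly the required infimum identities once the geodesic set is nonempty, so that $\tau^*<\infty$.

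For the set equality I would introduce the set $G$ of all points lying on some time-minimizing trajectory of \eqref{problem}, and show that each union equals $G$. For the forward union: if $x\in\optsourceset$ then $T\todest(x)=\tau^*$, so every optimal trajectory from $x$ reaches $\destset$ in time $\tau^*$ and is time-minimizing for \eqref{problem}, whence $\Gamma^*_x\subseteq G$; conversely, if $\sigma$ is time-minimizing for \eqref{problem} with initial point $a\in\sourceset$, then $\tau^*\leq T\todest(a)\leq\tau^*$ forces $a\in\optsourceset$ and makes $\sigma$ an optimal trajectory from $a$, so its image lies in $\Gamma^*_a$. Hence $\cup_{x\in\optsourceset}\Gamma^*_x=G$. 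For the reverse union I would run the same argument after time reversal: if $x\in\optdestset$, an optimal reverse trajectory from $x$ to $\sourceset$ reverses to a time-minimizing trajectory of \eqref{problem} with the same image, so $\tilde\Gamma^*_x\subseteq G$; and any $\sigma\subseteq G$ ending at $b\in\destset$ reverses to an optimal reverse trajectory from $b$, forcing $b\in\optdestset$ and placing the image of $\sigma$ in $\tilde\Gamma^*_b$. Thus $\cup_{x\in\optdestset}\tilde\Gamma^*_x=G$ as well, and the two unions coincide.

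The main obstacle, and the step requiring the most care, is the reversal correspondence itself: I must verify that the time reversal of an admissible forward trajectory is genuinely admissible for the reverse system (measurability of $\tilde\alpha(t)=\alpha(\tau-t)$ and the constraint $\tilde y(s)\in\overline{\Omega}$ for all $s$), that total time, and hence optimality, are preserved exactly, and — crucially — that a minimizing trajectory of \eqref{problem} must start in $\optsourceset$ and end in $\optdestset$ rather than at a merely admissible endpoint. This last point is a squeeze argument relying on $\inf_{x\in\sourceset}T\todest(x)=\inf_{x\in\destset}T\fromsource(x)=\tau^*$, which is why the infimum identities must be established before, or in tandem with, the set equality.
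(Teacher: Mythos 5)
Your proposal is correct, and although it rests on the same engine as the paper's proof---time reversal of trajectories, $\tilde y(t)=y(\tau-t)$, $\tilde\alpha(t)=\alpha(\tau-t)$, which preserves the image, the total time, and the state constraint---it is organized around a genuinely different decomposition. The paper argues trajectory by trajectory in the Kruzkov variable $v$: it reverses a single optimal forward trajectory issued from $\source\in\optsourceset$, obtains $v\fromsource(\dest)\leq v\todest(\source)$, and excludes strict inequality by contradiction (a strictly faster reverse trajectory would reverse back into a forward trajectory contradicting the optimality of $\source$); this one argument shows simultaneously that the reversed trajectory is optimal and that $\dest\in\optdestset$, giving the inclusion $\cup_{x\in\optsourceset}\Gamma^*_x\subseteq\cup_{x\in\optdestset}\tilde{\Gamma}^*_x$, after which symmetry closes the set equality and the infimum identities fall out of the established equality in \eqref{value_2direction}. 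You instead pivot everything through the original problem \eqref{problem}: you first prove that $\inf_{x\in\sourceset}T\todest(x)$ and $\inf_{x\in\destset}T\fromsource(x)$ both equal the value $\tau^*$ of \eqref{problem}, via a time-preserving bijection between the two classes of admissible connecting trajectories---and note this holds without the nonemptiness hypothesis, a mild strengthening of the stated result---and you then identify each union with the common set $G$ of points lying on time-minimizing trajectories of \eqref{problem}, using squeeze arguments ($\tau^*\leq T\todest(a)\leq\tau^*$) that consume the value identity, so the set equality requires no ``by symmetry'' step and no contradiction. What the paper's route buys in exchange is that it stays in the variable $v$ used throughout \Cref{sec-cotra} and needs only one optimal trajectory to run. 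The two proofs also share the same technical debt, which you correctly flag rather than hide: a reversed control is a priori defined only on $[0,\tau]$, while membership in $\tilde{\A}_{\Omega,x}$ requires the state to remain in $\overline{\Omega}$ for all $s\geq 0$, so the control must be extended past $\tau$; this is possible because, as noted after \eqref{controlset}, \Cref{assp1} guarantees the admissible control sets are nonempty at every point---a point the paper's own proof passes over silently.
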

\begin{proof}
	Let $y_{\alpha^*}(\source;\cdot) : [0,\tau^*] \mapsto \Omegab$  be an optimal trajectory for the problem {\rm (\ref{dynmsys},\ref{disc-cost},\ref{value-disc})}, with $\source \in \sourceset$ and the optimal control $\alpha^*$, assuming one exists. Let us denote $\dest:=y_{\alpha^*}(\source;\tau^*)  \in \destset$. Consider the problem in reverse direction starting at $\dest$, and the control $\tilde{\alpha}^*$ such that $\tilde{\alpha}^*(s) = \alpha^*(\tau^* - s)$, $\forall s \in [0,\tau^*]$. Then, the associated state at time $s$ is $\tilde{y}_{\tilde{\alpha}^*}(\dest;s) = y_{\alpha^*}(\source;\tau^* -s)$. In particular $\tilde{y}_{\tilde{\alpha}^*}(\dest;\tau^*)  = \source \in \sourceset$, and
the trajectory $\tilde{y}_{\tilde{\alpha}^*}(\dest;\cdot)$ arrives in $\sourceset$ 
at time $\tau^*$. By definition of the value function $v\fromsource$, we have
		$v\fromsource(\dest) \leq  
		 v\todest (\source)$, 
with equality holds if and only if $\tilde{\alpha}^*$ is optimal.

Let us assume that $\cup_{x\in \optsourceset} \{ \Gamma^*_x \}$ is nonempty, 
and take $\source \in \optsourceset$, such that $\Gamma^*_{\source}$ is
nonempty, we get 
	\begin{equation}\label{value_2direction}
		v\fromsource(\dest) \leq v\todest(\source)=\inf_{\source \in \sourceset} v\todest(\source) \ .
		\end{equation}
If the above inequality is strict, then there exists
a trajectory (not necessary optimal)
$\tilde{y}_{\tilde{\alpha}'}(\dest;\cdot)$  starting from $\dest$ and arriving
in $\source'\in\sourceset$ at time $\tau'< \tau^*$.
Then, the reverse trajectory $y_{\alpha'}(\source';\cdot)$ 
is starting from $\source'$ and arrives in $\dest$ at time $\tau'$,
and we get $v\todest(\source')= 1-e^{-\tau'}< v\todest(\source)$ which is 
impossible.
This shows the equality in \eqref{value_2direction} and that 
$\tilde{\alpha}^*$ is optimal, so $\tilde{\Gamma}^*_{\dest}$ is nonempty.
Also, if $\dest\notin\optdestset$,
 by the same construction applied to $\dest'\in \optdestset$,
we get a contradiction, showing that $\dest\in\optdestset$.
Hence $\cup_{x\in\optdestset} \{ \tilde{\Gamma}^*_x \}$ is nonempty,
and 
$y_{\alpha^*}(\source;s) =  \tilde{y}_{\tilde{\alpha}^*}(\dest;\tau^*-s)  \in  
\cup_{x\in\optdestset} \{ \tilde{\Gamma}^*_x\} . $ 
Since this holds for all optimal trajectories $y_{\alpha^*}$ starting in
any $\source \in \optsourceset$, we deduce that
$\cup_{x\in \optsourceset} \{ \Gamma^*_x \}\subset  \cup_{x\in\optdestset} \{ \tilde{\Gamma}^*_x \}$.
By symmetry, we obtain the equality, so the first equality of
the proposition.
Moreover, by the equality in \eqref{value_2direction}, 
we also get the second equality of the proposition. 
	\end{proof} 
From now on,
 we set $\Gamma^* =\cup_{x\in \optsourceset} \{ \Gamma^*_x \} = \cup_{x\in\optdestset} \{ \tilde{\Gamma}^*_x \} $, and call it \NEW{the set of geodesic points from $\sourceset$ to $\destset$}.
When $\Gamma^*$ is nonempty, using \Cref{geodesicpoints},
we shall denote by $v^*$ the following value:
\[ 
v^*:=\inf_{x \in \sourceset} v\todest(x) = \inf_{x 	\in \destset} v\fromsource(x)
\enspace . \] 
Once $v^*$ is obtained, we can get the minimum time by $\tau^* = -\log(1-v^*)$.

\begin{lemma}\label{pro-tra} Assume $\Gamma^{*}$ is non-empty. Then, we have 
\begin{equation}\label{opt-traj}
v^*= \inf_{y \in \Omegab} \{v\fromsource (y) + v\todest(y) - v\fromsource(y) v\todest(y) \}\enspace ,
\end{equation}
and the infimum is attained for every $x \in \Gamma^{*}$.
Moreover, if there exists an optimal trajectory between any two points of $\Omegab$, then $x$ is optimal in 
\eqref{opt-traj}, that is $(v\fromsource(x) + v\todest(x) - v\fromsource(x)v\todest(x) ) = v^*$, if and only if $x \in \Gamma^{*}$.
\end{lemma}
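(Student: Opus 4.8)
The plan is to recognize that the quantity minimized in \eqref{opt-traj} is nothing but the Kruzkov transform of the \emph{sum} of the two one-sided minimum times. Writing $a=v\fromsource(y)$, $b=v\todest(y)$ and using $a=1-e^{-T\fromsource(y)}$, $b=1-e^{-T\todest(y)}$, one has
\[
a+b-ab \;=\; 1-(1-a)(1-b)\;=\;1-e^{-(T\fromsource(y)+T\todest(y))}.
\]
Since $z\mapsto 1-e^{-z}$ is increasing, and since $v^*=1-e^{-\tau^*}$ with $\tau^*=-\log(1-v^*)=\inf_{x\in\sourceset}T\todest(x)$ the minimum travel time from $\sourceset$ to $\destset$, the identity \eqref{opt-traj} is equivalent to
\[
\tau^*=\inf_{y\in\Omegab}\bigl(T\fromsource(y)+T\todest(y)\bigr),
\]
with the same minimizers. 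So the whole statement reduces to an additive one about minimum-time functions, where, by the reverse-dynamics correspondence of \Cref{sec_reverse}, $T\fromsource(y)$ is the least forward time to steer the system from $\sourceset$ to $y$ inside $\Omegab$ and $T\todest(y)$ the least forward time from $y$ to $\destset$ inside $\Omegab$.

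First I would establish the lower bound $T\fromsource(y)+T\todest(y)\ge\tau^*$ for every $y\in\Omegab$. Given $\varepsilon>0$, pick an admissible trajectory from some $\source\in\sourceset$ to $y$ of duration at most $T\fromsource(y)+\varepsilon$, and one from $y$ to some $\dest\in\destset$ of duration at most $T\todest(y)+\varepsilon$. Both stay in $\Omegab$, so their concatenation at $y$ is an admissible $\sourceset$-to-$\destset$ trajectory of duration at most $T\fromsource(y)+T\todest(y)+2\varepsilon$; as every such trajectory lasts at least $\tau^*$, letting $\varepsilon\to0$ gives the bound (the case where a time is $+\infty$ being trivial).

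For the attainment, take $x\in\Gamma^*$, so that $x=y_{\alpha^*}(\source;t)$ for some optimal trajectory $y_{\alpha^*}(\source;\cdot):[0,\tau^*]\to\Omegab$ starting at a point $\source\in\optsourceset$ and some $t\in[0,\tau^*]$. Its restriction to $[0,t]$ is an admissible passage from $\sourceset$ to $x$ of duration $t$, so $T\fromsource(x)\le t$; its restriction to $[t,\tau^*]$ gives $T\todest(x)\le\tau^*-t$. Summing and combining with the lower bound forces $T\fromsource(x)+T\todest(x)=\tau^*$. This proves at once that the infimum equals $\tau^*$, hence \eqref{opt-traj}, and that it is attained at every geodesic point.

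It then remains to prove the ``Moreover'' equivalence, assuming optimal trajectories exist between any two points of $\Omegab$. The implication $x\in\Gamma^*\Rightarrow$ optimality in \eqref{opt-traj} is exactly what precedes. Conversely, if $T\fromsource(x)+T\todest(x)=\tau^*$, I would use the existence hypothesis to select an \emph{optimal} passage from some $\source\in\sourceset$ to $x$ of duration $T\fromsource(x)$ and an optimal passage from $x$ to $\destset$ of duration $T\todest(x)$; glued at $x$, these form an admissible trajectory from $\source$ to $\destset$ of total duration $\tau^*$, hence of minimal duration, hence optimal. This yields $T\todest(\source)\le\tau^*$, so $v\todest(\source)\le v^*$, and since $v^*=\inf_{\sourceset}v\todest$ we get $\source\in\optsourceset$; the glued trajectory therefore starts in $\optsourceset$ and passes through $x$, so $x\in\Gamma^*_{\source}\subset\Gamma^*$. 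The delicate points I expect are this last bookkeeping --- one must verify not merely that $x$ lies on \emph{some} optimal $\sourceset$-to-$\destset$ trajectory but that such a trajectory can be taken to emanate from $\optsourceset$, which is what the definition $\Gamma^*=\cup_{\source\in\optsourceset}\Gamma^*_{\source}$ requires --- together with checking that the concatenated controls keep the glued trajectory admissible (in $\overline{\Omega}$) across the junction at $x$.
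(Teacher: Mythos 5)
Your proof is correct and follows essentially the same route as the paper's: the Kruzkov reduction of \eqref{opt-traj} to $\tau^*=\inf_{y\in\Omegab}\bigl(T\fromsource(y)+T\todest(y)\bigr)$, the lower bound by concatenating (near-)optimal one-sided trajectories at $y$, attainment by splitting an optimal trajectory at $x\in\Gamma^*$, and the converse by gluing optimal passages under the existence hypothesis. The only difference is that you explicitly verify that the glued trajectory emanates from a point of $\optsourceset$ (via $v\todest(\source)\leq v^*$), a bookkeeping step the paper's proof leaves implicit; this is a welcome precision rather than a deviation.
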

\begin{proof}
	We first notice that, by an elementary computation, \eqref{opt-traj} is equivalent to 
	\begin{equation}\label{opt-traj-time}
		\tau^* = \inf_{y \in \Omegab} \{ T\fromsource(y) + T\todest(y) \} \enspace.
		\end{equation}
Fix $x \in \Omegab$. 
For any $\tau>0$ and $\tilde{\alpha}\in {\tilde{\A}}_{\Omega,x}$ s.t.\ $\tilde{y}_{\tilde{\alpha}} (x;\tau) \in \sourceset$, denote $ \source = \tilde{y}_{\tilde{\alpha}} (x;\tau) \in \sourceset$. Then, by the change of variable $s = \tau -t$ and $\alpha(s)=\tilde{\alpha}(\tau-t)$, we have $\alpha \in \A_{\Omega,\source}$ and $y_{\alpha}(\source;\tau)=x$. This implies 
\begin{equation}\label{value_xfroms}
	T\fromsource(x) =  \inf \limits_{\tau>0,\tilde{\alpha}\in {\tilde{\A}}_{\Omega,x}\atop \tilde{y}_{\tilde{\alpha}} (x;\tau) \in \sourceset } \left\{ \tau \right\} = \inf \limits_{\tau >0, \source \in \sourceset, \alpha \in \A_{\Omega,\source} \atop y_{\alpha}(\source;\tau)=x} \left\{ \tau \right\} \enspace .
\end{equation}
Similarly for $T\todest$, we have 
\begin{equation}\label{value_xtodest}
	T\todest(x) = \inf_{\tau'>0,\alpha'\in {\mathcal A}_{\Omega,x} \atop  y_{\alpha'} (x;\tau^{'}) \in \destset }\left\{ \tau  \right\} = \inf_{\tau'>0, \dest\in  \destset, \alpha'\in {\mathcal A}_{\Omega,x} \atop  y_{\alpha'} (x;\tau^{'}) = \dest } \left\{  \tau  \right\} \enspace .
\end{equation}
Let use take $\tau >0$ 
such that $y_{\alpha}(\source;\tau)=x$, and $\tau'>0$ 
such that $ y_{\alpha'} (x;\tau^{'}) = \dest$.
Concatenating $\alpha$ stopped at time $\tau$ and $t\in [\tau,\infty)\mapsto \alpha'(t-\tau)$, we obtain $\alpha''\in {\mathcal A}_{\Omega,\source}$
such that $y_{\alpha''}(\source;\tau+\tau')=\dest$  and
the trajectory from $\source$ to $\dest$ is going through $x$ at time $\tau$.
 This implies
 \begin{equation}\label{sumoftau}
 	T\fromsource(x) + T\todest(x) \geq \tau^* \enspace ,
 	\end{equation}
where the inequality comes from the last
equality in \Cref{geodesicpoints}. Since the above inequality is an equality for $x\in\optdestset$ or $x\in\optsourceset$, we deduce \eqref{opt-traj-time}, and so~\eqref{opt-traj}.
If $x \in \Gamma^{*}$, there exist $\source\in\sourceset$, $\dest\in\destset$ and $\alpha\in {\mathcal A}_{\Omega,\source}$ such that $y_{\alpha}(\source;\tau^{*})=\dest$ and $y_{\alpha}(\source;\tau) = x$ for some $0\leq \tau\leq \tau^*$. 
Taking $\tau'=\tau^*-\tau$, we get an equality in \eqref{sumoftau}.

Let now $x\in \Omega$ be optimal in \eqref{opt-traj-time}. %
Assuming that there exists an optimal trajectory for each of the two minimum time problems starting from any point, %
that is the infimum in \eqref{value_xfroms} and \eqref{value_xtodest} are achieved.
So again concatenating $\alpha$ stopped at time $\tau=T\fromsource(x)$ and $t\in [\tau,\infty)\mapsto \alpha'(t-\tau)$, we obtain $\alpha''\in {\mathcal A}_{\Omega,\source}$ and
a trajectory from $\source$ to $\dest$ going through $x$ at time $\tau$ and arriving
at $\dest$ at time $\tau+\tau'$.
Using \eqref{sumoftau}, we get 
$\tau^*=T\fromsource(x) + T\todest(x)$, so $\alpha''$ is optimal, which shows 
$x\in \Gamma^*$.
\end{proof}

For easy expression, for every 
$x\in \overline{\Omega}$ and $v = (v\fromsource, v\todest)$, we denote
\begin{equation}\label{operF}
	\F_v(x) = v\fromsource(x) + v\todest(x) - v\fromsource(x) v\todest(x) \enspace.
\end{equation}

\subsection{Reduction of The State Space}
Let us now consider the open subdomain $\mathcal{O}_{\eta}$ of $\Omega$,
determined by a parameter $\eta > 0$, and defined as follows:
\begin{equation} \label{eqo}
	\Oeta = \{ x\in (\Omega \setminus ( \sourceset \cup \destset) )  \ | \ \F_v(x) < \inf_{y \in \Omega} \{ \F_v(y) + \eta \} \ \} \enspace.
\end{equation}
By \Cref{assp1}, we observe that $v\fromsource$, $v\todest$ are continuous in $\overline{\Omega}$, so does $\F_v$, thus the infimum in \eqref{eqo} is achieved by an element $y\in \overline{\Omega}$, and by \Cref{pro-tra}, it is equal to $v^*+\eta$.
This also implies that $\Oeta$ is an open set.
Since $\eta>0$, we also have that $\Oeta$ is nonempty.
\begin{proposition}\label{newboundary} Under \Cref{assp1}, 
and assuming that $\Gamma^*$ is nonempty, we have
\(\optsourceset \subseteq (\partial \Oeta) \cap (\partial \sourceset),  \ \optdestset \subseteq (\partial \Oeta) \cap (\partial \destset), 
\; \text{and}\;\Gamma^* \subset \Oetab \quad \forall \eta > 0 \).
	\end{proposition}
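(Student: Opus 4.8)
The plan is to exploit the identity $v^* = \inf_{y\in\Omegab}\F_v(y)$ established in \Cref{pro-tra}, which lets me rewrite the subdomain as $\Oeta = \{x \in \Omega\setminus(\sourceset\cup\destset) : \F_v(x) < v^*+\eta\}$, together with the continuity of $\F_v$ (a consequence of the continuity of $v\fromsource$ and $v\todest$ under \Cref{assp1}). All three inclusions will then reduce to two ingredients: a computation of $\F_v$ on the relevant sets, and the openness of the level set $\{\F_v < v^*+\eta\}$. I would treat the source side in detail; the destination side is identical after exchanging the forward and reverse problems of \Cref{sec_reverse}.

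First I would record the elementary fact that $v\fromsource(x)=0$ for every $x\in\sourceset$ (the reverse trajectory already starts in $\sourceset$, so $T\fromsource(x)=0$), whence $\F_v(x)=v\todest(x)$ on $\sourceset$. Consequently $\min_{\sourceset}\F_v = \min_{\sourceset}v\todest = v^*$, attained exactly on $\optsourceset$; in particular $\F_v(x)=v^*$ for every $x\in\optsourceset$.

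The key step, and the main obstacle, is to show $\optsourceset\subseteq\partial\sourceset$. I would argue by contradiction: if some $x\in\optsourceset$ lay in the interior of $\sourceset$, pick $r>0$ with $B(x,r)\subseteq\sourceset$ and fix $t\in(0,r/M_f)$, so that $y_{\alpha}(x;t)\in\sourceset$ for every admissible control, since the displacement is bounded by $M_f t$ under \Cref{assp1}. As $\Gamma^*\neq\emptyset$ forces $v^*<1$, hence $T\todest(x)<\infty$, I can choose a near-optimal control $\alpha\in\A_{\Omega,x}$ with $J\todest(\alpha,x)<T\todest(x)+t/2$. Following $\alpha$ for time $t$ and then continuing with the same control reaches $\destset$ in total time $J\todest(\alpha,x)$, so $T\todest(y_{\alpha}(x;t)) \le J\todest(\alpha,x)-t < T\todest(x)$, while $y_{\alpha}(x;t)\in\sourceset$; this contradicts $T\todest(x)=\min_{\sourceset}T\todest$. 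Hence $x\in\partial\sourceset$. Because $\sourceset\subset\Omega$ is compact, points near $x$ lying outside $\sourceset$ belong to $\Omega\setminus(\sourceset\cup\destset)$, and by continuity of $\F_v$ together with $\F_v(x)=v^*<v^*+\eta$ such points satisfy $\F_v<v^*+\eta$; thus every neighbourhood of $x$ meets $\Oeta$, giving $x\in\overline{\Oeta}$. Since $x\in\sourceset$ excludes $x$ from $\Oeta$, we conclude $x\in\partial\Oeta\cap\partial\sourceset$, which proves the first inclusion and, symmetrically, the second.

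Finally, for $\Gamma^*\subseteq\Oetab$, take $x\in\Gamma^*$; by \Cref{pro-tra} the infimum defining $v^*$ is attained at $x$, so $\F_v(x)=v^*$. If $x\in\Omega\setminus(\sourceset\cup\destset)$ then $x\in\Oeta$ outright. If $x\in\sourceset$, then $v\todest(x)=\F_v(x)=v^*$ forces $x\in\optsourceset$, and likewise $x\in\destset$ forces $x\in\optdestset$, so the inclusions just proved place $x$ in $\partial\Oeta\subseteq\Oetab$. The only remaining case is $x\in\partial\Omega\setminus(\sourceset\cup\destset)$, where the open set $\{\F_v<v^*+\eta\}$ contains $x$ and therefore meets $\Omega\setminus(\sourceset\cup\destset)$ in every neighbourhood of $x$, so again $x\in\overline{\Oeta}$. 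In all cases $x\in\Oetab$, which completes the argument.
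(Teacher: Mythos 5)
Your proof is correct, and it follows the same overall skeleton as the paper's: identify $\F_v$ with $v^*$ on the relevant sets via \Cref{pro-tra}, rule out interior points of $\sourceset$ by a positivity-of-cost argument, and conclude from the continuity of $\F_v$ and the openness of $\Oeta$. Two steps are carried out differently, and the differences are worth recording. First, to obtain $\F_v(x)=v^*$ on $\optsourceset$, the paper argues $\optsourceset\subseteq\sourceset\cap\Gamma^*$ and then applies \Cref{pro-tra} on $\Gamma^*$; that inclusion tacitly assumes an optimal trajectory exists from every minimizer in $\optsourceset$ (a point of $\optsourceset$ from which the infimum is not attained need not be a geodesic point). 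You instead observe that $v\fromsource\equiv 0$ on $\sourceset$, so $\F_v=v\todest$ there and $\min_{\sourceset}\F_v=v^*$ is attained exactly on $\optsourceset$; this uses only $\Gamma^*\neq\emptyset$ (through \Cref{geodesicpoints} and \Cref{pro-tra}) and is therefore slightly more robust than the paper's corresponding step. Second, where the paper dismisses interior points of $\sourceset$ with a one-line appeal to the dynamic programming principle (any trajectory from the interior must cross $\partial\sourceset$, and the running cost is positive), you give the quantitative version: follow a near-optimal control for a time $t<r/M_f$, so that the state stays inside $\sourceset$ while $T\todest$ decreases by at least $t/2$, contradicting minimality. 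The two arguments are the same in substance; yours is longer but fully self-contained. Finally, your three-case analysis for $\Gamma^*\subseteq\Oetab$ (interior points land in $\Oeta$; points of $\sourceset$ or $\destset$ are forced into $\optsourceset$ or $\optdestset$ and hence into $\partial\Oeta$; points of $\partial\Omega$ handled by density of $\Omega$ and continuity) reproduces, case by case, what the paper obtains in one stroke from the inclusion $\Oetab\supseteq\{x\in\overline{\Omega\setminus(\sourceset\cup\destset)}\mid \F_v(x)<v^*+\eta\}$.
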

\begin{proof}
Let us first notice that,  since $\F_v$ is continuous,
we have $\Oetab \supseteq \{ x\in \overline{(\Omega\setminus (\sourceset \cup \destset) )}  \mid \F_v(x) <  \inf_{y \in \Omega} \{ \F_v(y) + \eta \}$. Moreover, since 
$\sourceset$ and $\destset$ are disjoint compact subsets of $\Omega$,
then  $\partial \Oeta \supseteq \{ x\in \partial\Omega\cup\partial\sourceset \cup \partial \destset   \ | \ \F_v(x) <  \inf_{y \in \Omega} \{ \F_v(y) + \eta \} \}$.

	By the dynamic programming principle, and since the cost in
\eqref{cost_s} is $1$ (so positive),
and $\sourceset$ and $\destset$ are disjoint,
we have 
	\( v\todest (x) > \inf_{y \in \partial \sourceset} v \todest(y)\), 
for all $x$ in the interior of $\sourceset$ (any trajectory starting in
$x$ need to go through $\partial \sourceset$).
This implies that $\Gamma^*$ does not intersect the interior of $\sourceset$, 
and similarly  $\Gamma^*$ does not intersect the interior of $\destset$,
so
$\Gamma^*\subseteq \overline{(\Omega\setminus (\sourceset \cup \destset) )}$.
Moreover, $\optsourceset \subseteq \sourceset\cap \Gamma^* \subseteq \partial \sourceset$ and $\optdestset \subseteq \destset\cap \Gamma^* \subseteq \partial \destset$.

Now for all $x\in \Gamma^*$, we have $\F_v(x) = v^* = \inf_{y\in \Omegab} \F_v(y),$
so $x\in \Oetab$, showing that $\Gamma^*\subseteq  \Oetab$.
Let us now take $x\in \optsourceset$. We already shown that
$\optsourceset \subseteq \partial \sourceset$,
and we also have $\optsourceset\subset \Gamma^*$,
so $\F_v(x) = v^*$. All together, this  implies that 
$x\in \partial \Oeta$.
The same argument holds for $\optdestset$. 
	\end{proof}
To apply the comparison principle (\Cref{th_soner}), we need to work with
a domain with a $\Cc^1$ boundary. To this end, we make the following assumption:
\begin{assumption}\label{assump_trajec}
	The set $\Gamma^*$ is nonempty and there exists $\bar{\eta}>0$ such that $\overline{\Oc}_{\bar{\eta}} \subset \Omega$.
	\end{assumption}
Then, for every $\mu >0$, and $\eta<\bar{\eta}$,
we select a function $\F^\mu_v : \Omegab \to \R$, that is $\Cc^d$ on $\Omega$, and that approximates $\F$ on  $\Oetab$, i.e.,
\begin{equation}\label{F_regu}
	\sup_{x\in\Oetab} | \F^\mu_v(x) -\F_v (x)|\leq  \mu \enspace . 
	\end{equation}
Let us also consider a domain $\Oeta^\mu$, deduced from $\F^\mu_v$ and defined as follows:
\begin{equation}\label{Oeta_regu}
	\Oeta^\mu = \{ x\in (\Omega \setminus (\sourceset \cup \destset)) \mid  \F^\mu_v (x) < \inf_{y \in \Omega} \{ \F_v (y) + \eta  \} \ \} \ . 
	\end{equation}
We notice that $ \mathcal{O}_{\eta - \mu}^\mu \subseteq \Oeta \subseteq \mathcal{O}_{\eta+\mu}^\mu$ for arbitrary small $\mu$. 
Moreover $\overline{\Oeta^\mu}\subset \Omega$, for  $\eta<\bar{\eta}$ and $\mu
\leq \bar{\eta}-\eta$. 
Then, $\Oeta^\mu$ can be compared with $\Oeta$, and it is 
the strict sublevel set of the $\Cc^d$ function. It can then be 
seen as a regularization of $\Oeta$. 
So, for almost all $\eta$ (small enough) and $\mu$ small enough,
 $(\partial \Oeta^\mu) \setminus (\destset\cup 
 \sourceset)$ is $\Cc^1$ (Corollary of Morse-Sard Theorem, see~\cite{morse1939behavior,sard1942measure}). 
We shall see that $\mathcal{O}_\eta^\mu$ is a smooth neighborhood of optimal trajectories, and we intend to reduce the state space of our optimal control problem from $\Omegab$ to the closure $\Oetabmu$ of $\Oetamu$. 

 Starting with the problem in direction ``to destination'', the reduction of the state space leads to a new set of controls:
\begin{equation}\label{controlseto}
	\mathcal{A}_{\eta,x} := \{\alpha \in \mathcal{A} \ | \ y_\alpha(x;s) \in \Oetabmu, \text{ for all }  s \geq 0 \} \enspace .
\end{equation}
Let $v\todest^{\eta}(x)$ denote the value function of the optimal control problem when the set of controls is $\mathcal{A}_{\eta,x}$. Consider a new state constrained HJ equation: $ SC(F,\Oetamu, (\partial \Oetamu) \cap (\partial \destset ))$, 
we have the following result:

\begin{proposition}[Corollary of~\Cref{th_soner}]
The value function $v\todest^{\eta}$ of the control problem in $\Oetabmu$ is the unique viscosity solution of $ SC(F,\Oetamu, (\partial \Oetamu) \cap (\partial \destset ))$. 
\end{proposition}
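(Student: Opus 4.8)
The plan is to obtain the statement as a direct application of \Cref{th_soner} with the domain $\Oc = \Oetamu$ and the target set $\Theta := (\partial \Oetamu) \cap (\partial \destset)$. Concretely, I would first check that the pair $(\Oetamu,\Theta)$ satisfies the structural hypotheses of \Cref{th_soner}, and then read off both uniqueness (from the comparison principle) and the identification of the unique viscosity solution with the restricted value function (from the ``moreover'' part of that theorem).

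First I would verify that $\Oetamu$ is an open bounded domain. Openness is immediate from the definition \eqref{Oeta_regu}, since it is the intersection of the open set $\Omega \setminus (\sourceset \cup \destset)$ with the strict sublevel set $\{\F^\mu_v < v^* + \eta\}$ of the continuous (indeed $\Cc^d$) function $\F^\mu_v$. Boundedness follows from $\Oetamu \subset \Omega$; moreover $\overline{\Oetamu} \subset \Omega$ for $\eta < \bar\eta$ and $\mu \le \bar\eta - \eta$ by the remark following \eqref{Oeta_regu} and \Cref{assump_trajec}, so that no part of $\partial\Omega$ enters the boundary. Compactness of $\Theta$ is then clear: $\destset$ is compact, hence $\partial\destset$ is compact, and $\partial\Oetamu$ is closed, so $\Theta$ is a closed subset of a compact set.

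The main obstacle will be the regularity of $\partial\Oetamu \setminus \Theta$. Decomposing $\partial\Oetamu$ into the genuine level-set portion contained in $\{\F^\mu_v = v^*+\eta\}$ and the portions lying on $\partial\sourceset \cup \partial\destset$, I would invoke the Morse--Sard corollary already recorded after \eqref{Oeta_regu}: for almost every small $\eta$ and small $\mu$, the level-set portion $(\partial\Oetamu)\setminus(\sourceset \cup \destset)$ is $\Cc^1$. Since $\sourceset$ and $\destset$ are disjoint and $\partial\Oetamu \cap \mathrm{int}(\destset) = \emptyset$, one has $\partial\Oetamu \setminus \Theta = \partial\Oetamu \setminus \partial\destset$, which is the union of this smooth level-set piece with the piece of $\partial\Oetamu$ lying on $\partial\sourceset$. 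The latter piece is genuinely present (near any optimal source point $\source^*\in\optsourceset$ one has $\F_v(\source^*)=v^*<v^*+\eta$, so $\source^*\in\partial\Oetamu\cap\partial\sourceset$ once $\mu<\eta$), and the Morse--Sard genericity only controls the level set, not $\partial\sourceset$. Hence to conclude that $\partial\Oetamu \setminus \Theta$ is of class $\Cc^1$ one must also know that $\partial\sourceset$ is $\Cc^1$ where it meets $\Oetamu$ and that the level set meets $\partial\sourceset$ transversally; this is the delicate point, requiring the source boundary to be smooth either as a standing regularity hypothesis or by construction.

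Finally, $f$ satisfies \Cref{assp1} with $\Oetamu$ in place of $\Omega$, since continuity, boundedness, and the two Lipschitz estimates are inherited on the smaller closed set $\overline{\Oetamu}\subset\Omega$. With the hypotheses in hand, \Cref{th_soner} yields the comparison principle for $SC(F,\Oetamu,\Theta)$, hence uniqueness, and identifies its unique viscosity solution with the value function of the optimal control problem with dynamics \eqref{dynmsys}, criterion \eqref{disc-cost}, state constraint $y\in\Oetabmu$, and target $\Theta$. It then remains to observe that this value function is exactly $v\todest^{\eta}$: because $\destset\cap\Oetamu=\emptyset$ and $\partial\Oetamu\cap\destset\subseteq\partial\destset$, any admissible trajectory staying in $\overline{\Oetamu}$ can reach $\destset$ only through $\Theta$, so reaching $\destset$ and reaching $\Theta$ give the same optimal cost, while the control set $\mathcal{A}_{\eta,x}$ of \eqref{controlseto} is precisely the state-constrained control set for $\Oetabmu$. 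This matches the value function supplied by \Cref{th_soner} with $v\todest^{\eta}$ and completes the argument.
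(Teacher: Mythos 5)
Your proposal is correct and takes essentially the same route as the paper: the paper offers no written proof at all, stating the result as an immediate corollary of \Cref{th_soner} applied with $\Oc=\Oetamu$ and $\gT=(\partial\Oetamu)\cap(\partial\destset)$, which is exactly the verification of hypotheses (openness and boundedness of $\Oetamu$, compactness of the target, generic $\Cc^1$ regularity via Morse--Sard, and inheritance of \Cref{assp1} on the subdomain) that you carry out. The delicate point you flag is real and is glossed over by the paper: its Morse--Sard remark only guarantees smoothness of $(\partial\Oetamu)\setminus(\sourceset\cup\destset)$, while $\partial\Oetamu\setminus\gT$ also contains a nonempty piece of $\partial\sourceset$ (e.g.\ near $\optsourceset$), so strictly speaking an additional regularity hypothesis on $\partial\sourceset$ (or its inclusion in the target) is needed to invoke \Cref{th_soner}; identifying this is a merit of your write-up, not a defect.
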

The same reduction works for the problem in the reverse direction. %
We denote $v^{\eta}\fromsource$ the value function of this problem with the set of controls be $\tilde{\A}_{\eta, x} :=\{ \tilde{\alpha} \in \A \mid \tilde{y}_{\tilde{\alpha}} (x;s) \in \Oetabmu, \text{ for all } s \geq 0 \}  $.
\begin{proposition}\label{lemmatra} 
	If $\ \Gamma^{*}$ is not empty, then $\Gamma^{*} \subseteq \Oetabmu $, and for all $x \in \Gamma^{*}$, we have: $v\fromsource (x) = v\fromsource^{\eta} (x), \ v\todest(x) = v\todest^{\eta} (x) $.
	
%
\end{proposition}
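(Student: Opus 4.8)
The statement has two parts: the geometric inclusion $\Gamma^*\subseteq\Oetabmu$, and the two value identities, which I would obtain from a single argument applied once to the forward dynamics and once to the reversed dynamics of \Cref{sec_reverse}.

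For the inclusion, the decisive fact is \Cref{pro-tra}: every geodesic point satisfies $\F_v(x)=v^*=\inf_{y\in\Omegab}\F_v(y)$, so geodesic points lie below the threshold $v^*+\eta$ defining $\Oeta$ with a full margin of $\eta$. By \Cref{newboundary} we already know $\Gamma^*\subseteq\Oetab$, so the approximation bound \eqref{F_regu} applies at any $x\in\Gamma^*$ and gives $\F^\mu_v(x)\le\F_v(x)+\mu=v^*+\mu$; choosing $\mu<\eta$ (compatibly with the standing constraints $\eta<\bar{\eta}$, $\mu\le\bar{\eta}-\eta$) yields the strict inequality $\F^\mu_v(x)<v^*+\eta=\inf_{y\in\Omega}\{\F_v(y)+\eta\}$. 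Since $\Oeta$, hence $\Oetab$, avoids the interiors of $\sourceset$ and $\destset$, each $x\in\Gamma^*$ either lies in $\Omega\setminus(\sourceset\cup\destset)$ or on $\partial\sourceset\cup\partial\destset$. In the first case the displayed inequality is precisely the membership condition \eqref{Oeta_regu}, so $x\in\Oetamu\subseteq\Oetabmu$. In the second case (which contains $\optsourceset$ and $\optdestset$ by \Cref{newboundary}) I would argue by closure: $\F^\mu_v$ is $\Cc^d$, hence continuous, on $\Omega$ and $\F^\mu_v(x)<v^*+\eta$, so this strict inequality persists on a neighborhood of $x$; as $x\in\partial\sourceset$ (or $\partial\destset$) and $\sourceset,\destset$ are disjoint compacts, that neighborhood meets $\Omega\setminus(\sourceset\cup\destset)$ in points of $\Oetamu$, whence $x\in\overline{\Oetamu}=\Oetabmu$. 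This establishes $\Gamma^*\subseteq\Oetabmu$.

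For the value identities I would prove $v\todest(x)=v\todest^\eta(x)$; the identity for $v\fromsource$ follows verbatim from the reversed control problem. Since $\Oetabmu\subseteq\Omegab$, the state constraint $y_\alpha(x;s)\in\Oetabmu$ is more restrictive, so $\mathcal{A}_{\eta,x}\subseteq\mathcal{A}_{\Omega,x}$, and taking the infimum over the smaller control set gives $v\todest^\eta(x)\ge v\todest(x)$. For the converse, I use that $x\in\Gamma^*$ lies on an optimal trajectory running from some $\source\in\optsourceset$ to some $\dest\in\optdestset$; by the dynamic programming principle its tail from $x$ is an optimal control for the unconstrained ``to destination'' problem at $x$, and this whole tail consists of geodesic points, hence by the first part is contained in $\Gamma^*\subseteq\Oetabmu$, reaching $\destset$ at $\dest\in\optdestset\subseteq\Gamma^*\subseteq\Oetabmu$. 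Thus this optimal control is admissible for the restricted problem, so $v\todest^\eta(x)\le v\todest(x)$, and equality follows.

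The main obstacle is the treatment of the geodesic endpoints sitting on $\partial\sourceset\cup\partial\destset$: by construction $\Oetamu$, and hence $\Oetabmu$, excludes $\sourceset\cup\destset$, so these endpoints are never interior to $\Oetamu$ and must be recovered through the closure, which is precisely why the continuity of the smoothed $\F^\mu_v$ together with the disjointness of $\sourceset$ and $\destset$ is needed. A secondary bookkeeping point is that \eqref{F_regu} only controls $\F^\mu_v$ on $\Oetab$, so the inclusion $\Gamma^*\subseteq\Oetab$ from \Cref{newboundary} must be invoked before the approximation bound, and $\mu$ must be taken strictly below $\eta$ so that the $\mu$-perturbation does not overshoot the $\eta$-margin enjoyed by geodesic points.
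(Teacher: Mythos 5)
Your proof is correct and follows essentially the same route as the paper's: the inclusion $\Gamma^*\subseteq\Oetabmu$ is derived from \Cref{newboundary} (you merely spell out the $\mu$-approximation bound \eqref{F_regu} and the closure argument at points of $\partial\sourceset\cup\partial\destset$, which the paper leaves implicit as a ``straightforward consequence''), and the value identities are obtained by the same two inequalities — $v\todest\leq v\todest^{\eta}$, $v\fromsource\leq v\fromsource^{\eta}$ from the restriction of the control set since $\Oetabmu\subseteq\Omegab$, and the reverse inequalities on $\Gamma^*$ because optimal trajectories issued from geodesic points stay in $\Gamma^*\subseteq\Oetabmu$ and are therefore admissible for the constrained problem.
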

\begin{proof} $\Gamma^* \subseteq \Oetabmu$ is a straightforward
  consequence of \Cref{newboundary}. Then, we have $v\fromsource(x) \leq v\fromsource^\eta(x), v\todest (x)\leq v\todest^\eta(x)$ for all $x\in \Oetamu$, since $\Oetamu \subseteq \Omega$.
Then, we also have $v\fromsource(x) \geq v\fromsource^\eta(x), v\todest (x)\geq v\todest^\eta(x)$ for all $x\in \Gamma^*$, since there exists optimal trajectories from $x\in \Gamma^*$ staying in $\Gamma^*$ and $\Gamma^* \subseteq \Oetabmu$.
\end{proof}	

\subsection{$\delta$-optimal trajectories and the value function}
The above results express properties of exact optimal trajectories. We will also consider
approximate, $\delta-$optimal, trajectories.
\begin{definition}
	For every $x\in \Omega$, we say that $y_{\alpha^{\delta}}(x;\cdot):[0,\tau] \to \Omegab$ is a $\delta$-optimal trajectory with associated $\delta$-optimal control $\alpha^{\delta}:[0,\tau] \to S^1$ for the problem {\rm (\ref{dynmsys},\ref{disc-cost},\ref{value-disc})} if :
	\[
\ y_{\alpha^{\delta}}(x;\tau) \in \destset\quad\text{and}\quad \int_{0}^{\tau} e^{-t} dt  \leq v\todest(x) + \delta \enspace.
	\]
	We denote by $\Gamma^{\delta}_x$ the set of $\delta-$geodesic points starting from $x$, i.e., 
	\(\Gamma^{\delta}_x = \{ y_{\alpha^{\delta}}(x;t) \ | \ t\in[0,\tau], \ \alpha^\delta :[0,\tau] \to S^1\text{ $\delta$-optimal }\}\).
	We define analogously $\delta$-optimal trajectories for the problem in reverse direction, and denote by $\tilde{\Gamma}^{\delta}_x$ the set of $\delta$-geodesic points starting from $x$ in the reverse direction. 
\end{definition}

Following the same argument as in \Cref{geodesicpoints}, we obtain the following result:
\begin{proposition}\label{delta_geodesic}
	Let us denote
	\[ \optsourceset^\delta = \{ x \in \partial \sourceset \mid v\todest(x) \leq v^* +\delta \}, \ \optdestset^\delta = \{x \in \partial \destset \mid v\fromsource(x) \leq v^* + \delta \} \ , \]
	then we have: 
	\begin{equation}\label{delta_geoset}
\cup_{\delta'\in [0,\delta]}
	\cup_{x\in \optsourceset^{\delta-\delta'}} \{ \Gamma^{\delta'}_x \} = \cup_{\delta'\in [0,\delta]} \cup_{x\in \optdestset^{\delta-\delta'}} \{ \tilde{\Gamma}^{\delta'}_x \}.
	\end{equation}
\qed
	\end{proposition}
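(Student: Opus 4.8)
The plan is to prove that the left-hand set is contained in the right-hand set; the reverse inclusion then follows by interchanging the roles of $\sourceset$ and $\destset$ (the forward and reverse problems), exactly as in the proof of \Cref{geodesicpoints}. The whole content is a bookkeeping of how the suboptimality ``budget'' $\delta$ is split, when a forward trajectory is reversed, between the defect of its endpoint (its distance from being an exact optimal source/destination point) and the defect of the trajectory itself.

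First I would fix a point $z$ belonging to the left-hand set, so that $z\in\Gamma^{\delta'}_x$ for some $\delta'\in[0,\delta]$ and some $x\in\optsourceset^{\delta-\delta'}$. By definition there is a $\delta'$-optimal control $\alpha^{\delta'}$ with trajectory $y_{\alpha^{\delta'}}(x;\cdot):[0,\tau]\to\Omegab$ such that $z=y_{\alpha^{\delta'}}(x;t_0)$ for some $t_0\in[0,\tau]$, $\dest:=y_{\alpha^{\delta'}}(x;\tau)\in\destset$, and the discounted cost $c:=\int_0^\tau e^{-t}\,dt$ satisfies $c\le v\todest(x)+\delta'$. Reducing $\tau$ to the first entrance time of $\destset$ does not increase $c$, so I may assume $\dest\in\partial\destset$. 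Using $x\in\optsourceset^{\delta-\delta'}$, i.e.\ $v\todest(x)\le v^*+(\delta-\delta')$, I obtain the single key estimate $c\le v^*+\delta$.

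Next I would reverse the trajectory as in \Cref{geodesicpoints}: with $\tilde\alpha(s)=\alpha^{\delta'}(\tau-s)$ one has $\tilde y_{\tilde\alpha}(\dest;s)=y_{\alpha^{\delta'}}(x;\tau-s)$, an admissible reverse trajectory visiting the same points of $\Omegab$, arriving at $\tilde y_{\tilde\alpha}(\dest;\tau)=x\in\sourceset$ with cost $c$ and passing through $z=\tilde y_{\tilde\alpha}(\dest;\tau-t_0)$. Hence $v\fromsource(\dest)\le c$, and I set $\delta'':=c-v\fromsource(\dest)\ge0$. Then the reverse trajectory has cost exactly $v\fromsource(\dest)+\delta''$, so it is $\delta''$-optimal and $z\in\tilde\Gamma^{\delta''}_{\dest}$. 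It remains to check $\delta''\le\delta$ and $\dest\in\optdestset^{\delta-\delta''}$: the former is $c-v\fromsource(\dest)\le(v^*+\delta)-v^*=\delta$, using $v\fromsource(\dest)\ge v^*$; the latter is $v\fromsource(\dest)=c-\delta''\le v^*+(\delta-\delta'')$ together with $\dest\in\partial\destset$. This places $z$ in the right-hand set, and the symmetric argument gives the opposite inclusion.

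The only step that is more than a transcription of \Cref{geodesicpoints} is the defect bookkeeping: one must \emph{not} reuse $\delta'$ for the reversed trajectory but define the fresh defect $\delta'':=c-v\fromsource(\dest)$, and then confirm that it still lies in $[0,\delta]$. The decisive ingredient is the inequality $v\fromsource(\dest)\ge v^*$, which holds precisely because $v^*=\inf_{y\in\destset}v\fromsource(y)=\inf_{y\in\sourceset}v\todest(y)$ by \Cref{geodesicpoints}; without it the reversed defect could exceed the allotted budget $\delta$. Everything else---admissibility of the reversed control, the first-entrance normalization ensuring endpoints lie on $\partial\destset$ (resp.\ $\partial\sourceset$), and the monotonicity of the cost $\tau\mapsto 1-e^{-\tau}$---is routine, so I would not anticipate further difficulty.
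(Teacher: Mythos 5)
Your proof follows exactly the route the paper intends: the paper gives no argument for this proposition beyond the sentence ``Following the same argument as in \Cref{geodesicpoints}'', and your trajectory-reversal plus defect bookkeeping is precisely that argument made explicit. The bookkeeping itself is correct and is indeed the real content: defining the fresh defect $\delta'':=c-v\fromsource(\dest)$, checking $\delta''\in[0,\delta]$ via $v\fromsource(\dest)\geq v^*$ (which holds by \Cref{geodesicpoints}, since $v^*=\inf_{y\in\destset}v\fromsource(y)$ when $\Gamma^*\neq\emptyset$), and verifying $v\fromsource(\dest)=c-\delta''\leq v^*+(\delta-\delta'')$ are all sound, as is the symmetry argument for the reverse inclusion.

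There is, however, one step that does not hold up as stated: ``Reducing $\tau$ to the first entrance time of $\destset$ does not increase $c$, so I may assume $\dest\in\partial\destset$'' is not a without-loss-of-generality reduction. The paper's definition of a $\delta'$-optimal trajectory only requires $y_{\alpha^{\delta'}}(x;\tau)\in\destset$ and $\int_0^\tau e^{-t}dt\leq v\todest(x)+\delta'$; the trajectory may cross $\partial\destset$ and continue, even into the interior of $\destset$, and $\Gamma^{\delta'}_x$ contains \emph{all} of its points. If the given point $z=y_{\alpha^{\delta'}}(x;t_0)$ has $t_0$ strictly larger than the first entrance time, your truncation discards $z$, and the rest of the proof says nothing about it. This is not merely a technicality, because for such points the claimed equality can genuinely fail under the literal definitions: take $f\equiv 1$, $\sourceset$ and $\destset$ two disjoint balls, $x\in\partial\sourceset$ the point closest to $\destset$ (so $v\todest(x)=v^*$), and a straight trajectory from $x$ that crosses $\partial\destset$ and stops at a point $z$ at depth $\epsilon>0$ inside $\destset$, with $\epsilon$ chosen so that its total cost equals $v^*+\delta$. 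Then $z$ lies in the left-hand set, but any reverse trajectory from a point of $\partial\destset$ passing through $z$ and then reaching $\sourceset$ needs time at least $\operatorname{dist}(z,\partial\destset)+\operatorname{dist}(z,\sourceset)$, which exceeds by $\epsilon$ the time budget corresponding to the cost $v^*+\delta$; hence $z$ is not in the right-hand set. The honest conclusion is that your proof (and the proposition) is correct provided $\delta$-optimal trajectories are understood as stopped at their first hitting of the target --- the reading consistent with the cost functional \eqref{disc-cost}, under which your truncation is vacuous --- whereas under the paper's literal definition the points beyond the first entrance time constitute a gap, in your argument and in the statement itself.
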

Let us denote the set in \eqref{delta_geoset} by $\Gamma^\delta$, and call it  \NEW{the set of $\delta-$geodesic points from $\sourceset$ to $\destset$}. In the following, we intend to deduce the relationship between $\Gamma^\delta$ and our $\eta-$neighborhood, $\Oeta$. Let us start with a property of the $\delta-$optimal trajectories.

\begin{lemma}\label{delta-programming}
	Let $y_{\alpha^{\delta}}(x;\cdot):[0,\tau_x^{\delta}]\to\Omegab$ be a $\delta-$optimal trajectory of Problem {\rm (\ref{dynmsys},\ref{disc-cost},\ref{value-disc})} with associated $\delta$-optimal control $\alpha^{\delta}$ and $\delta$-optimal time $\tau^{\delta}_x$. Assume $v\todest(x)<1$, i.e., the minimum time from $x$ to $\destset : \tau_x < + \infty$. For every $z = y_{\alpha^{\delta}}(x;t_z)$, let us define a control $\alpha': [0, \tau^\delta_x - t_z] \to S_1$ such that $ \alpha'(s) = \alpha^{\delta}(s+t_z), \forall s \in [0,\tau^\delta_x-t_z]$. Then, the associated trajectory starting in $z$ with control $\alpha'$, $y_{\alpha'}(z;\cdot):[0,\tau_x^{\delta}-t_z] \to \Omegab$, is at least $(e^{t_z}\delta)$-optimal for the problem {\rm (\ref{dynmsys},\ref{disc-cost},\ref{value-disc})} with initial state $z$. 
\end{lemma}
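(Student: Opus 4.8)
The plan is to reduce everything to the identity $\int_0^\tau e^{-t}\,dt = 1-e^{-\tau}$ and to exploit the multiplicative way in which the discount factor splits along the trajectory. First I would rewrite the $\delta$-optimality hypothesis as
\[
1 - e^{-\tau^\delta_x} \le v\todest(x) + \delta ,
\]
and record the two basic facts about the tail trajectory $y_{\alpha'}(z;\cdot)$: since it is a sub-arc of the original $\delta$-optimal trajectory, it stays in $\Omegab$, so $\alpha'\in\A_{\Omega,z}$; and it reaches the target, because $y_{\alpha'}(z;\tau^\delta_x-t_z)=y_{\alpha^\delta}(x;\tau^\delta_x)\in\destset$. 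Consequently its cost equals $1-e^{-(\tau^\delta_x-t_z)}$, and by the change of variable $t=s+t_z$ the total cost of the original trajectory splits as
\[
1 - e^{-\tau^\delta_x} = \bigl(1-e^{-t_z}\bigr) + e^{-t_z}\bigl(1-e^{-(\tau^\delta_x-t_z)}\bigr) .
\]

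The one nontrivial ingredient is a suboptimality inequality (the easy half of dynamic programming) comparing $v\todest$ at $x$ and at $z$, namely
\[
v\todest(x) \le \bigl(1-e^{-t_z}\bigr) + e^{-t_z}\, v\todest(z) .
\]
I would prove it directly by concatenation: for any $\varepsilon>0$ choose $\beta^\varepsilon\in\A_{\Omega,z}$ reaching $\destset$ with cost at most $v\todest(z)+\varepsilon$, and glue it to the restriction of $\alpha^\delta$ to $[0,t_z]$, which drives $x$ to $z$ while staying in $\Omegab$. The glued control then lies in $\A_{\Omega,x}$, reaches $\destset$, and by the same discount-splitting identity has cost $(1-e^{-t_z}) + e^{-t_z}(v\todest(z)+\varepsilon)$; taking the infimum over admissible controls and letting $\varepsilon\to 0$ yields the claim.

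Combining the three displays and cancelling the common term $1-e^{-t_z}$ gives
\[
e^{-t_z}\bigl(1-e^{-(\tau^\delta_x-t_z)}\bigr) \le e^{-t_z}\, v\todest(z) + \delta ,
\]
and multiplying through by $e^{t_z}$ produces $1-e^{-(\tau^\delta_x-t_z)} \le v\todest(z) + e^{t_z}\delta$. Since the left-hand side is exactly the cost of $y_{\alpha'}(z;\cdot)$ and this trajectory reaches $\destset$, this is precisely the definition of an $(e^{t_z}\delta)$-optimal trajectory from $z$, which concludes the argument. I expect the only genuine obstacle to be the concatenation step for the suboptimality inequality, where the admissibility of the glued control (staying in $\Omegab$) and the passage to the infimum must be handled with care; everything else is the routine algebra of the discount factor. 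Note also that the standing assumption $v\todest(x)<1$ is what guarantees the $\delta$-optimal trajectory has finite length, and that $v\todest(z)\le 1-e^{-(\tau^\delta_x-t_z)}<1$ automatically, so the notion of $(e^{t_z}\delta)$-optimal trajectory from $z$ is well posed.
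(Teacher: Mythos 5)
Your proof is correct and follows essentially the same route as the paper's: split the discounted cost at time $t_z$, invoke the suboptimality half of the dynamic programming principle $v\todest(x) \leq (1-e^{-t_z}) + e^{-t_z}v\todest(z)$, and cancel/rescale to get the $(e^{t_z}\delta)$-optimality of the tail trajectory. The only difference is that you prove the dynamic programming inequality explicitly by concatenation, whereas the paper simply cites it; this is a fleshed-out detail, not a different argument.
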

\begin{proof}
	By definition, we have $y_{\alpha^{\delta}}(x;\tau^{\delta}_x) \in \destset $,
and $\int_{0}^{\tau^{\delta}_x} e^{-t}dt \leq v\todest(x) + \delta. $ Then, considering the control $\alpha'$ defined above, we have $y_{\alpha'}(z;\tau_x^{\delta}-t_z) \in \destset$ and 
		\( e^{-t_z} \int_{0}^{\tau_x^{\delta}-t_z} e^{-s} ds \leq v\todest(x) - \int_{0}^{t_z}e^{-s}ds + \delta\). 
By the dynamic programming equation, we have
\( v\todest(x)\leq \int_{0}^{t_z}e^{-s}ds +e^{-t_z} v\todest(z)\),
 which implies
		\( \int_{0}^{\tau_x^{\delta}-t_z} e^{-s} ds 
\leq v\todest(z) + e^{t_z} \delta\). 
	We deduce the result from the definition of $(e^{t_z}\delta)$-optimal 
trajectories.
\end{proof}
\begin{remark}
	One can deduce the same result for the $\delta-$optimal trajectory of the problem in reverse direction. In fact, for the minimum time problem, our definition of the $\delta-$optimal trajectory implies $\tau^{\delta}_x - \tau^{*}_x \leq e^{\tau^{*}_x} \delta$, where $\tau^{\delta}_x$ and $\tau^{*}_x$ denote the $\delta-$optimal time and the true optimal time respectively.
\end{remark}

\begin{lemma}\label{delta-tra}
	For every $\eta > \delta >0$, we have $\Gamma^{\delta}\subseteq \Oetab $ .
\end{lemma}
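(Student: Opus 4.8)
The plan is to reduce the inclusion $\Gamma^\delta\subseteq\Oetab$ to a single scalar inequality, namely $\F_v(z)<v^*+\eta$ for every $z\in\Gamma^\delta$, and then to invoke the description of the closure obtained inside the proof of \Cref{newboundary}: since $\F_v$ is continuous and $\inf_{y\in\Omega}\F_v(y)=v^*$ by \Cref{pro-tra}, one has $\Oetab\supseteq\{x\in\overline{\Omega\setminus(\sourceset\cup\destset)}\mid \F_v(x)<v^*+\eta\}$. Thus it suffices to prove the scalar bound together with $z\in\overline{\Omega\setminus(\sourceset\cup\destset)}$.

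First I would fix $z\in\Gamma^\delta$ and unwind the definition \eqref{delta_geoset} in the ``to destination'' direction: there exist $\delta'\in[0,\delta]$, a point $\source\in\optsourceset^{\delta-\delta'}$, and a $\delta'$-optimal trajectory $y_{\alpha^{\delta'}}(\source;\cdot):[0,\tau]\to\Omegab$ reaching $\destset$ at time $\tau$, with $z=y_{\alpha^{\delta'}}(\source;t_z)$ for some $t_z\in[0,\tau]$. I would then split this trajectory at $z$. The initial piece, reversed, is an admissible reverse trajectory from $z$ that reaches $\source\in\sourceset$ in time $t_z$, giving $v\fromsource(z)\le 1-e^{-t_z}$; the terminal piece is an admissible forward trajectory from $z$ that reaches $\destset$ in time $\tau-t_z$, giving $v\todest(z)\le 1-e^{-(\tau-t_z)}$.

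The key step is to combine these two one-sided estimates through the multiplicative identity $\F_v(z)=1-(1-v\fromsource(z))(1-v\todest(z))$, which is just \eqref{operF} rewritten using $v=1-e^{-T}$. Since both factors are positive, $(1-v\fromsource(z))(1-v\todest(z))\ge e^{-t_z}e^{-(\tau-t_z)}=e^{-\tau}$, hence $\F_v(z)\le 1-e^{-\tau}$. Now $\delta'$-optimality of the trajectory yields $1-e^{-\tau}\le v\todest(\source)+\delta'$, while $\source\in\optsourceset^{\delta-\delta'}$ gives $v\todest(\source)\le v^*+(\delta-\delta')$; adding these and using $\delta<\eta$ produces the desired $\F_v(z)\le v^*+\delta<v^*+\eta$.

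Finally I would check the set membership. A trajectory point $z$ lying in $\Omega\setminus(\sourceset\cup\destset)$ then satisfies $z\in\Oeta\subseteq\Oetab$ outright, and the two endpoints $\source\in\partial\sourceset$ and $y_{\alpha^{\delta'}}(\source;\tau)\in\partial\destset$ lie in $\overline{\Omega\setminus(\sourceset\cup\destset)}$ because $\sourceset,\destset$ sit in the interior of $\Omega$. I expect the only genuine obstacle to be excluding points in the \emph{open} interiors of $\sourceset$ or $\destset$, where the $\F_v$-bound still holds but the characterization of $\Oetab$ no longer applies. For the destination this is handled by taking $\tau$ to be the first entrance time into $\destset$, so that no $y_{\alpha^{\delta'}}(\source;t)$ with $t\le\tau$ lies in $\mathrm{int}\,\destset$; for the source one argues as in \Cref{newboundary}, using the dynamic programming principle and the strict inequality $v\todest(\cdot)>v^*$ on $\mathrm{int}\,\sourceset$ to exclude interior excursions of the geodesic points under consideration. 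This bookkeeping, rather than the clean $\F_v$ estimate, is where I would be most careful.
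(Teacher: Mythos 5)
Your core argument is exactly the paper's: split the $\delta'$-optimal trajectory at $z$, bound $T\fromsource(z)\le t_z$ and $T\todest(z)\le \tau-t_z$ by reversing the head and keeping the tail, combine these into $\F_v(z)\le 1-e^{-\tau}\le v\todest(\source)+\delta'\le v^*+\delta<v^*+\eta$, and conclude via \Cref{pro-tra}. Your multiplicative form $\F_v(z)=1-(1-v\fromsource(z))(1-v\todest(z))$ is precisely the paper's ``elementary computation'' (and, incidentally, your inequalities are stated correctly, whereas the paper's text contains two typos at this step: it writes $T\fromsource(x)+T\todest(x)\geq\tau$ instead of $\leq\tau$, and drops the minus sign in the exponent of its display).

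Where you go beyond the paper is the final bookkeeping paragraph. The paper's proof passes directly from the scalar bound $\F_v(x)<v^*+\eta$ to $x\in\Oetab$, silently using $x\in\overline{\Omega\setminus(\sourceset\cup\destset)}$, and you are right that this is the only delicate point. Be aware, however, that your source-side patch does not close it as stated: the strict inequality $v\todest>v^*$ on $\mathrm{int}\,\sourceset$ (from the proof of \Cref{newboundary}) excludes \emph{exact} geodesic points from the interior, but a $\delta'$-optimal trajectory may still make an excursion of small depth into $\mathrm{int}\,\sourceset$ while wasting only $O(\delta')$ time, and the points of such an excursion belong to $\Gamma^{\delta}$ as defined, yet lie outside $\overline{\Omega\setminus(\sourceset\cup\destset)}\supseteq\Oetab$. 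This is a looseness in the paper's definition of $\delta$-optimal trajectories (which the paper's own proof ignores entirely), not a defect of your main estimate; the clean repair is the convention you already use on the destination side, namely reading the trajectories up to first entrance into $\destset$ and, symmetrically, from the last exit out of $\sourceset$.
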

\begin{proof}
	Let $y_{\alpha^{\delta'}}(\source;\cdot):[0,\tau] \to \Omegab$ denote  a $\delta'-$optimal trajectory for the problem  {\rm (\ref{dynmsys},\ref{disc-cost},\ref{value-disc})} with $\source \in \optsourceset^{\delta-\delta'}$ and $\delta'\leq \delta$.
Denote $\dest:=y_{\alpha^{\delta}}(\source;\tau) \in \destset$. 
It is sufficient to show that $y_{\alpha^{\delta'}}(\source;t_x) \in \Oetab$ for every $t_x \in [0,\tau]$. %
	
	For an arbitrary $t_x\in[0,\tau]$, let us denote $x:=y_{\alpha^{\delta'}}(\source;t_x)$. Let $\alpha': [0, \tau -t_x] \to S_1 $ be a control such that $ \alpha'(s) = \alpha^{\delta'}(s+t_x), \forall s \in [0,\tau-t_x]$. Then, the associated trajectory starting at $x$ with control $\alpha'$, $y_{\alpha'}(x;\cdot):[0,\tau-t_x] \to \Omegab$, satisfies $y_{\alpha'}(x;s) = y_{\alpha^{\delta'}}(\source; s+t_x)$, for every $s\in[0,\tau-t_x]$. %
By the definition of %
$T\todest$, and the fact that $y_{\alpha'}(x;\tau-t_x) = \dest\in \destset$,
we have $T\todest(x) \leq \tau-t_x$. 
Similarly, using simple change of variable $s = t_x - s'$, we have 
$T\fromsource(x) \leq t_x$. 
Thus we have  $T\fromsource(x) + T\todest(x) \geq (\tau - t_x) + t_x = \tau $.
By  an elementary computation, we have 
\begin{equation}v\fromsource(x) + v\todest(x) - v\fromsource(x) v\todest(x ) = 1 - e^{T\fromsource(x)+ T\todest(x)} \leq 1 - e^{-\tau} \leq v^* + \delta \ . \end{equation}
Using %
 \Cref{pro-tra}, and $\eta>\delta$,  we obtain that $x=y_{\alpha^{\delta}}(\source;t_x) \in \Oetab$ for all $t_x \in [0,\tau]$.
Since this is true for all $0\leq \delta'\leq \delta$, we obtain $\Gamma^{\delta}   \subseteq \Oetab$.
\end{proof}

\begin{lemma}\label{Oinclu-gamma}
	For every $\delta^{'}>0$, we have $\mathcal{O}_{\eta} \subset \Gamma^{\eta+\delta'} $. %
\end{lemma}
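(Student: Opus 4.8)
The plan is to take an arbitrary $x\in\Oeta$ and realize it as a point of an approximately optimal trajectory from $\sourceset$ to $\destset$, splitting the admissible slack between the two indices that appear in the definition of $\Gamma^{\eta+\delta'}$ (see \Cref{delta_geodesic}). First I would record, exactly as in the proof of \Cref{delta-tra}, that $\F_v(x)=1-e^{-(T\fromsource(x)+T\todest(x))}$, and that $\inf_{y\in\Omegab}\F_v(y)=v^*$ by \Cref{pro-tra}. Thus $x\in\Oeta$ means precisely $\F_v(x)<v^*+\eta$, and in particular $\F_v(x)<1$, so both $T\fromsource(x)$ and $T\todest(x)$ are finite.

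Next I would build the trajectory. By definition of $v\fromsource(x)$ as an infimum, for any $\delta_1>0$ there is a reverse trajectory $\tilde y_{\tilde\alpha}(x;\cdot):[0,\tilde\tau]\to\Omegab$ first hitting $\sourceset$ at a point $\source\in\partial\sourceset$ with $1-e^{-\tilde\tau}\le v\fromsource(x)+\delta_1$; likewise, for any $\delta_2>0$ there is a forward $\delta_2$-optimal trajectory $y_{\alpha_2}(x;\cdot):[0,\hat\tau]\to\Omegab$ from $x$ to some $\dest\in\partial\destset$ with $1-e^{-\hat\tau}\le v\todest(x)+\delta_2$. Reversing the first and concatenating it with the second, exactly as in the proof of \Cref{pro-tra}, yields an admissible control $\alpha\in\mathcal{A}_{\Omega,\source}$ whose trajectory starts at $\source$, passes through $x$ at time $\tilde\tau$, and reaches $\dest\in\destset$ at time $\tilde\tau+\hat\tau$, with cost $1-e^{-(\tilde\tau+\hat\tau)}$.

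For the estimate, note that $\tilde\tau\ge T\fromsource(x)$ and $\hat\tau\ge T\todest(x)$, so the cost is $\ge\F_v(x)$, while the near-optimality bounds force $\tilde\tau\to T\fromsource(x)$ and $\hat\tau\to T\todest(x)$ as $\delta_1,\delta_2\to0$; hence the cost decreases to $\F_v(x)$. Using $\F_v(x)<v^*+\eta$ I would fix $\delta_1,\delta_2$ small enough that
\[
1-e^{-(\tilde\tau+\hat\tau)}<\F_v(x)+\delta'<v^*+\eta+\delta'\ .
\]
Now set $\rho:=\big(1-e^{-(\tilde\tau+\hat\tau)}\big)-v\todest(\source)$, the exact defect of the constructed trajectory viewed as a trajectory issued from $\source$. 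Since $v\todest(\source)$ is $\le$ the cost (the trajectory is admissible from $\source$) and $\ge v^*$ (because $\source\in\sourceset$), we get $0\le\rho<\eta+\delta'$, so the trajectory is $\rho$-optimal from $\source$ and $x$ lies on it, i.e. $x\in\Gamma^{\rho}_{\source}$. Moreover $v\todest(\source)=(\text{cost})-\rho<v^*+(\eta+\delta')-\rho$, which says exactly $\source\in\optsourceset^{(\eta+\delta')-\rho}$. Taking the union in \Cref{delta_geodesic} then gives $x\in\Gamma^{\eta+\delta'}$.

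The only genuinely delicate point is this last bookkeeping: $\Gamma^{\eta+\delta'}$ is a \emph{double} union over an optimality gap $\rho$ and over an enlarged source set $\optsourceset^{(\eta+\delta')-\rho}$, and one must take $\rho$ to be the \emph{actual} sub-optimality of the constructed trajectory so that the two budget constraints ``$x\in\Gamma^{\rho}_{\source}$'' and ``$v\todest(\source)\le v^*+(\eta+\delta')-\rho$'' hold at the same time. Everything else reduces to monotonicity of $t\mapsto 1-e^{-t}$ and finiteness of the travel times on $\Oeta$. I would also remark that pushing the cost below $v^*+\eta$ (possible since $\F_v(x)<v^*+\eta$ strictly) in fact yields the slightly stronger inclusion $\Oeta\subset\Gamma^{\eta}$, the stated $\delta'$ merely providing convenient slack.
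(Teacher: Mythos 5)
Your proof is correct and follows essentially the same route as the paper: concatenate a near-optimal reverse trajectory from $x$ to $\sourceset$ with a near-optimal forward trajectory from $x$ to $\destset$, and bound the cost of the concatenation by $v^*+\eta+\delta'$ using $\F_v(x)<v^*+\eta$. The only differences are minor: the paper splits the slack explicitly as $\delta'/2+\delta'/2$ for the two legs and leaves the final double-union bookkeeping implicit, whereas you choose the slacks by a limiting argument and make explicit the split between the trajectory's actual defect $\rho$ and the membership $\source\in\optsourceset^{(\eta+\delta')-\rho}$ --- a welcome clarification, but the same proof.
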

\begin{proof}
	Take a $x\in \mathcal{O}_{\eta}$, it is sufficient to show that there exists at least one $(\eta+\delta^{'})-$optimal trajectory from $\sourceset$ to $\destset$ 
	that passes through $x$.

	Suppose there exist an optimal trajectory from  $x$ to $\sourceset$, and an optimal trajectory from $x$ to $\destset$, then,  concatenating the reverse trajectory of the optimal trajectory from  $x$ to $\sourceset$, with the optimal trajectory from $x$ to $\destset$, we obtain an $\eta-$optimal trajectory from one point of $\sourceset$ to $\destset$ (by definition). %

	Otherwise, one can consider a $\frac{\delta'}{2}-$optimal trajectory from $x$ to $\sourceset$, $y_{\alpha_1}(x;\cdot): [0,\tau_1] \to \Omega,$ and a $\frac{\delta'}{2}-$optimal trajectory from $x$ to $\destset$, $\tilde{y}_{\tilde{\alpha}_2}(x;\cdot): [0,\tau_2] \to \Omega$.  %
	Then: 
	\[
	\begin{aligned}
	\int_{0}^{\tau_1+\tau_2} \!\!\!\!e^{-s}ds &= \int_{0}^{\tau_1} \!e^{-s} ds + e^{-\tau_1} \int_{0}^{\tau_2} \!e^{-s}ds 
        \leq v\fromsource(x) + \frac{\delta'}{2}+ (1-v\fromsource(x))(v\todest(x) + \frac{\delta'}{2})  \\
	& \leq v\fromsource(x)+v\todest(x)-v\fromsource(x)v\todest(x) + (\frac{\delta'}{2}+\frac{\delta'}{2}) \leq v^*+ (\eta+\delta') \enspace ,
 	\end{aligned}
	\]
	where the first inequality is deduced from $\tau_1 \geq T\fromsource(x)$. 
	Thus, concatenating those two trajectories, we obtain a  $(\eta+\delta')-$optimal trajectory from $y_{\alpha_1}(x;\tau_1)\in \sourceset$ 
to $\destset$.
\end{proof}

The above two lemmas entail that the sets of $\delta-$geodesic points $\Gamma^\delta$ and $\mathcal{O}_\eta$ constitute equivalent families of neighborhoods of the optimal trajectory, and, in particular, $\Oetab$ contains at least all $\delta-$optimal trajectories for every $\delta < \eta$. 
Moreover, the sets $\mathcal{O}_\eta$, and $\Oetamu$ are also equivalent families of neighborhoods of the optimal trajectory, since $ \mathcal{O}_{\eta - \mu}^\mu \subseteq \Oeta \subseteq \mathcal{O}_{\eta+\mu}^\mu$ for arbitrary small $\mu$. Based on these properties, we have the following result regarding the value functions. %
\begin{theorem}\label{valuefunction}
	For every $\delta < \eta$ and $x\in \Gamma^{\delta}$, we have 
	$v^{\eta}\fromsource(x) = v\fromsource(x)$ and $v^{\eta}\todest(x) = v\todest(x) $.
\end{theorem}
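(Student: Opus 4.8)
The plan is to prove both identities by a sandwich argument, treating $v\todest^{\eta}$ and $v\fromsource^{\eta}$ symmetrically; I describe the ``to destination'' case. The easy inequality is $v\todest(x)\le v\todest^{\eta}(x)$: since $\Oetamu\subseteq\Omega$, every control keeping the trajectory inside $\Oetabmu$ also keeps it inside $\Omegab$, so $\mathcal{A}_{\eta,x}\subseteq\mathcal{A}_{\Omega,x}$ and the infimum defining $v\todest^{\eta}$ is taken over a smaller set of controls. This holds at every point of $\Oetabmu$, and in particular at $x\in\Gamma^{\delta}$, which does lie in $\Oetabmu$: the displayed computation in the proof of \Cref{delta-tra} gives $\F_v(x)\le v^*+\delta$, whence by \eqref{F_regu} one has $\F^{\mu}_v(x)\le v^*+\delta+\mu<v^*+\eta$ as soon as $\mu<\eta-\delta$.

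For the reverse inequality I would fix a small $\epsilon\in(0,\eta-\delta-\mu)$ and an $\epsilon$-optimal trajectory $\beta=y_{\alpha^{\epsilon}}(x;\cdot):[0,\sigma]\to\Omegab$ from $x$ to $\destset$ for the unrestricted problem. The crux is to show that $\beta$ never leaves $\Oetabmu$, so that $\alpha^{\epsilon}\in\mathcal{A}_{\eta,x}$ and hence $v\todest^{\eta}(x)\le v\todest(x)+\epsilon$; letting $\epsilon\to0$ then yields $v\todest^{\eta}(x)\le v\todest(x)$, and together with the easy inequality this gives equality. To control $\beta$, fix $t\in[0,\sigma]$ and set $z=\beta(t)$. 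The tail of $\beta$ from $z$ reaches $\destset$ in time $\sigma-t$, so $T\todest(z)\le\sigma-t$; and since $z$ can reach $\sourceset$ by first running the reversed head of $\beta$ back to $x$ (duration $t$) and then leaving $x$ optimally, the definition of $T\fromsource$ gives $T\fromsource(z)\le t+T\fromsource(x)$. Adding these and using $\F_w=1-e^{-(T\fromsource(w)+T\todest(w))}$ together with the $\epsilon$-optimality bound $e^{-\sigma}\ge(1-v\todest(x))-\epsilon$, an elementary computation (exactly as in \Cref{delta-tra}) yields $\F_v(z)\le\F_v(x)+\epsilon\le v^*+\delta+\epsilon<v^*+\eta$. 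This places $z$ in $\Oetab$ (via the characterization of $\Oetab$ established in the proof of \Cref{newboundary}), where \eqref{F_regu} is valid, so $\F^{\mu}_v(z)\le\F_v(z)+\mu\le v^*+\delta+\epsilon+\mu<v^*+\eta$, i.e.\ $z\in\Oetabmu$.

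The reverse-direction identity $v\fromsource^{\eta}(x)=v\fromsource(x)$ then follows by the same argument with the roles of $\sourceset$ and $\destset$, and of $T\fromsource$ and $T\todest$, interchanged, using the reverse dynamics \eqref{dynmsys_t}.

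I expect the main obstacle to be the boundary bookkeeping rather than the estimate itself, which only mirrors \Cref{delta-tra}. One must check that the trajectory points $z$ lie in $\overline{\Omega\setminus(\sourceset\cup\destset)}$ (so that the strict inequality $\F_v(z)<v^*+\eta$ really forces $z\in\Oetab$), and that the endpoint $\beta(\sigma)\in\destset$, where $\F_v=v\fromsource$ attains $v^*$ on $\optdestset\subseteq(\partial\Oeta)\cap(\partial\destset)$, is reached \emph{within} $\Oetabmu$ rather than just outside it; both are handled by passing to closures and by the fact, used in \Cref{newboundary}, that geodesic and near-geodesic trajectories avoid the interiors of $\sourceset$ and $\destset$. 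A secondary point is to make explicit the admissible range of the regularization parameter: the argument needs $\mu<\eta-\delta$, which is consistent with the standing convention that $\mu$ can be taken arbitrarily small in the definition \eqref{Oeta_regu} of $\Oetamu$.
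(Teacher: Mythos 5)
Your proof is correct and follows essentially the same route as the paper: the same sandwich argument, with the easy inequality coming from $\mathcal{A}_{\eta,x}\subseteq\mathcal{A}_{\Omega,x}$, and the hard inequality obtained by showing that an $\epsilon$-optimal trajectory from $x$ for the unrestricted problem stays in $\Oetabmu$, hence is admissible for the restricted one, so that $v\todest^{\eta}(x)\leq v\todest(x)+\epsilon$ for all small $\epsilon$. The only difference is packaging: the paper checks that the concatenation of the path reaching $x$ with the $\epsilon$-optimal tail is a $(\delta'+\epsilon)$-optimal trajectory, places it in $\Gamma^{\delta+\epsilon}$ and invokes \Cref{delta-tra}, whereas you inline the equivalent pointwise estimate $\F_v(z)\leq \F_v(x)+\epsilon\leq v^*+\delta+\epsilon<v^*+\eta$ along the tail --- the same computation that proves \Cref{delta-tra}, together with the same $\mu$-bookkeeping the paper leaves implicit.
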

\begin{proof}
We have $v\fromsource(x) \leq v\fromsource^\eta(x), v\todest (x)\leq v\todest^\eta(x)$ for all $x\in \Oetamu$, since $\Oetamu \subseteq \Omega$.

Now, let $\delta < \eta$. We have $\Gamma^{\delta}\subset \Oetabmu$ for $\mu$ small enough, using \Cref{delta-tra}. Then, to show the reverse inequalities $v\fromsource(x) \geq v\fromsource^\eta(x), v\todest (x)\geq v\todest^\eta(x)$ for $x\in \Gamma^{\delta}$, it is sufficient to show that for all $\epsilon>0$,
 there exist 
$\epsilon$-optimal trajectories from $x\in \Gamma^\delta$ staying in $\Oetabmu$.

	Let $y_{\alpha^{\delta'}}(\source;\cdot): [0,\tau^{\delta'}] \to \Omegab $, be a $\delta'$-optimal trajectory from $\source$ to $\destset$,  with $\delta'\in [0,\delta]$, $\source \in \optsourceset^{\delta-\delta'}$ and $ y_{\alpha^{\delta'}}(\source;\tau^{\delta'}) =  \dest \in \destset$, and the associated $\delta'-$optimal control  $\alpha^{\delta'}$. Let $x = y_{\alpha^{\delta'}}(\source;t_x)$, for $t_x\in [0,\tau^{\delta'}]$.
Let $\epsilon>0$ and consider a $\epsilon$-optimal trajectory from $x$ to $\destset$ with time length $\tau'$. So we have
$v\todest(x) \leq (1-e^{-\tau'})\leq v\todest(x) + \epsilon$.
Replacing the trajectory $y_{\alpha^{\delta'}}(x;\cdot): [0,\tau^{\delta'}-t_x] \to \Omegab $ by the $\epsilon$-optimal trajectory from $x$ to $\destset$, we have 
$(1-e^{-\tau'})\leq v\todest(x) + \epsilon\leq 1-e^{-(\tau^{\delta'}-t_x)}+\epsilon$.
Then, we obtain a trajectory from $\source$ to $\destset$
with time $t_x+\tau'$ such that
$v\todest(\source) \leq (1-e^{-(t_x+\tau')})
\leq 1-e^{-\tau^{\delta'}}+e^{-t_x}\epsilon\leq v\todest(\source)+\delta+\epsilon$.
Then, this trajectory is in $\Gamma_{\source}^{\delta'+\epsilon}\subseteq \Gamma^{\delta+\epsilon}$. For $\epsilon$ small enough we have $\delta+\epsilon<\eta$, so 
 $\Gamma^{\delta+\epsilon}\subset \Oetabmu$,  for $\mu$ small enough. We deduce that the  $\epsilon$-optimal trajectory from $x$ to $\destset$ is included in $\Oetabmu$, which implies that 
$v\todest^\eta(x)\leq v\todest(x)+\epsilon$.
Since it is true for all $\epsilon$ small enough, we deduce 
$v\todest^\eta(x)\leq v\todest(x)$ and so the equality.

        By the same  arguments, we have $v^{\eta}\fromsource(x) = v\fromsource(x)$. 
\end{proof}

Based on the above results, if we are only interested to find $v^*$ and optimal trajectories between $\sourceset$ and $\destset$, we can focus on solving the reduced problem in the subdomain $\Oetamu$, i.e., solving the system $ SC(F,\Oetamu, (\partial \Oetamu) \cap (\partial \destset))$. %

\section{The Multi-level Fast-Marching Algorithm}\label{sec-algo}

We now introduce the multi-level fast-marching algorithm,
to solve the initial minimum time problem \eqref{problem}. 

To begin with, let us briefly recall the classical numerical schemes to solve the control problem {\rm (\ref{dynmsys},\ref{disc-cost},\ref{value-disc})} and associated system $SC(F,\Omega\setminus \destset , \partial\destset)$.  Assume a finite grid mesh $X$ discretizing $\Omega$ is given, and the approximate value function $V: X \to \R$ is obtained by solving a discretized equation of the form  
\begin{equation}\label{discrete_HJB}
	V(x) = \mathcal{U}(V)(x), \ \forall \ x \in X \ ,
\end{equation}
where $\mathcal{U}$ is an operator from $L^\infty(X)$ to itself, which is monotone and contracting for the sup-norm.
In~\eqref{discrete_HJB}, classical operators $\mathcal{U}$ are based on ``monotone'' finite difference or semi-Lagrangian discretizations of the system $SC(F,\Omega\setminus \destset , \partial\destset)$. 
Notice that \eqref{discrete_HJB} includes the boundary conditions. Since a solution $V$ to the discretized equation \eqref{discrete_HJB} is equivalently a fixed point of the operator $\mathcal{U}$, the existence and uniqueness of a solution $V$ to \eqref{discrete_HJB} follows from the contraction property of the operator $\mathcal{U}$.
The existence of such ``monotone'' schemes, as well as the convergence of the solution of discretized equation \eqref{discrete_HJB} to the solution of HJB equation, are often obtained for discounted problems under regularity assumptions such as~\Cref{assp1} and for a mesh step small enough, see for instance~\cite{kushner2001numerical} for finite difference discretization and~\cite{falcone2013semi} for semi-Lagrangian discretization. This includes the case of our 
problem {\rm (\ref{dynmsys},\ref{disc-cost},\ref{value-disc})}.
The usual numerical method to solve~\eqref{discrete_HJB} consists in 
updating $V$ successively in each node of $X$. The convergence of this method
to the fixed point of $\mathcal{U}$ follows from the contraction property.

\subsection{Classical Fast Marching Method}\label{sec-fast}
The fast marching method was first introduced by Sethian~\cite{sethian1996fast} and Tsitsiklis~\cite{tsitsiklis1995efficient} to deal with the front propagation problem, then extended to general static HJ equations. Its initial idea takes advantage of the property that the evolution of the domain encircled by the front is monotone non-decreasing, thus one is allowed to only focus on the computation around the front at each iteration.
Then, it is a single-pass method which is faster than standard iterative algorithms. Generally, it has computational complexity (number of arithmetic operations) in the order of $K_d M \log (M)$  in a $d$-dimensional grid with
$M$ points %
(see for instance~\cite{sethian1996fast,cristiani2007fast}). %
The constant $K_d$ is the maximal number of nodes of the
discrete neighborhoods that are considered,
so it depends on $d$ and satisfies $K_d \in [2d,L^d]$ where $L$ is the maximal 
diameter of discrete neighborhoods. For instance $K_d=3^d$ for 
a local semilagrangian discretization, whereas $K_d=2d$ for 
a first order finite difference discretization.

In fast marching method, $\mathcal{U}$ (as in~\eqref{discrete_HJB}) is also called the update operator. 
The fast marching algorithm
visits the nodes of $X$ in a special ordering and computes the approximate value function in just one iteration. The special ordering 
is such that the value function is monotone non-decreasing in the direction of propagation. This construction is done by dividing the nodes into three groups (see figure below): \Far, which contains the nodes that have not been searched yet; \Accepted, which contains the nodes
at which
the value function has been already computed and settled (by the monotone non-decreasing property of the front propagation, in the subsequent search, we do not need to update the value function of those nodes, see for instance~\cite{sethian2003ordered}); and \Narrow, which contains the nodes "around" the front (we only need to update the value function at these nodes).
\begin{figure}[H]
	\centering
	\includegraphics[width=0.5\textwidth]{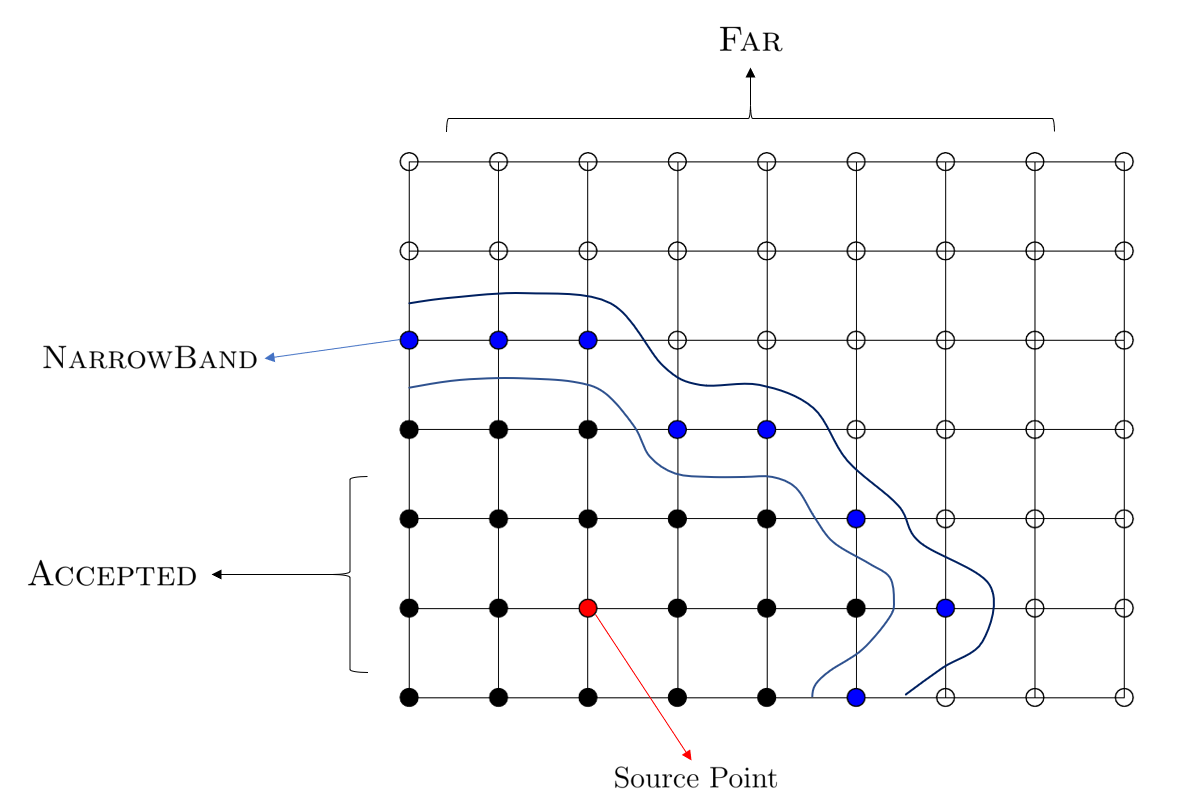}
	\label{fmfigure}
\end{figure}
At each step, the node in \Narrow  with the smallest value is added into
the set of \Accepted nodes, and then the \Narrow and the value function over \Narrow  are updated, using the value of the last accepted node. The computation is done by appying an update operator $\mathcal{U}$. 
Sufficient conditions on the update operator $\mathcal{U}$ 
for the convergence of the fast marching algorithm 
are %
that  $\mathcal{U}$ is not only monotone, but also causal (see for instance~\cite{sethian1996fast, cristiani2007fast}).

 A generic partial fast marching algorithm is given in \Cref{fmalgo} (compare with \cite{sethian1996fast,cristiani2007fast}). 
 We call it {\em partial} because the search stops when all the nodes of the ending set \End are accepted. 
Then, the approximate value function may only be computed in \End. %
The usual fast marching algorithm is obtained with \End equal to the mesh grid
$X$.
Moreover, for an eikonal equation, the starting set \Start plays
the role of the target (intersected with $X$).
If we only need to solve Problem \eqref{problem}, then we can apply
\Cref{fmalgo} with an update operator adapted to $SC(F,\Omega\setminus \destset , \partial\destset)$ with $F$ as in
\eqref{defF} and the sets 
\Start and \End equal to $\destset\cap X$ and $\sourceset\cap X$ respectively.
Similarly, we can apply \Cref{fmalgo} with an update operator adapted to
 the reverse HJ equation $SC(\tilde{F},\Omega\setminus \sourceset , \partial\sourceset)$ (with $\tilde{F}$ as in \Cref{sec_reverse}), which implies that the sets 
\Start and \End are equal to $\sourceset\cap X$ and $\destset\cap X$  respectively.

\begin{algorithm}[htbp]\small
	\caption{Partial Fast Marching Method. } 
	\label{fmalgo}
	\hspace*{0.02in} {{\bf Input:} Mesh grid $X$; Update operator $\mathcal{U}$. Two sets of nodes: \Start	and \End. }  \\
	\hspace*{0.02in} {{\bf Output:} Approximate value function $V$ and \Accepted set.}  \\
	\hspace*{0.02in} {{\bf Initialization:} Set $V(x) = +\infty,\forall x \in X$. Set all nodes as \Far. }
	\begin{algorithmic}[1]
		\State Add \Start to \Accepted, add all neighborhood nodes to \Narrow. 
		\State Compute the initial value $V(x)$ of the nodes in \Narrow.
		\While {(\Narrow \ is not empty and \End is not accepted)}
		\State Select $x^{*}$ having the minimum value $V(x^{*})$ among the \Narrow \ nodes.
		\State Move $x^{*}$ from \Narrow \ to \Accepted.  
		\For{All nodes $y$ not in \Accepted, such that $\mathcal{U}(V)(y)$ depends on $x^{*}$}
		\State $V(y) = \mathcal{U} (V)(y)$
		\If {y is not in \Narrow}
		\State Move $y$ from \Far \ to \Narrow.
		\EndIf
		\EndFor
		\EndWhile
	\end{algorithmic}
\end{algorithm}

\subsection{Two Level Fast Marching Method}
Our method combines coarse and fine grids discretizations, in order to obtain at a low cost, the value function on a subdomain of $\Omega$ around optimal trajectories. %
We start by describing our algorithm with only two levels of grid. %
\subsubsection{Computation in the Coarse Grid}\label{2lfm_coarse} We denote by  $X^H$ a coarse grid with constant mesh step $H$ on $\Omegab$, and by $x^H$ a node in this grid.
We perform the two following steps:
\begin{enumerate}
\item %
Do the partial fast marching search in the coarse grid $X^H$ in both froward and backward directions, 
to solve Problem \eqref{problem} as above.
\item %
  Select and store the \NEW{active} nodes (see~\Cref{OHcoarse}) 
  based on the two approximate value functions, as follows.
\end{enumerate}

The first step in coarse grid consists in applying \Cref{fmalgo} to each direction, that is 
a partial fast marching search, with mesh grid $X^{H}$, and
an update operator adapted to HJ equation $SC(F,\Omega\setminus \destset , \partial\destset)$ or $SC(\tilde{F},\Omega\setminus \sourceset , \partial\sourceset)$ and the appropriate sets \Start and \End. 
In particular, for a given parameter $\eta$, let us denote $\sourceset^{\eta} = \sourceset + B(0,{\eta})$, $\destset^{{\eta}} = \destset + B(0,{\eta})$. For the direction ``from source'', that is to solve the equation $SC(\tilde{F},\Omega\setminus \sourceset , \partial\sourceset)$, the set \Start is defined as  $\sourceset \cap X^H$, and the set \End is defined as $\destset^{{\eta}} \cap X^H$. For the other direction, the set \Start is defined as $\destset \cap X^H$ and the set \End is defined as  $\sourceset^{{\eta}} \cap X^H$.

This yields to the functions $V\fromsource^{H,1}$ and $V\todest^{H,1}$
that are numerical approximations of the value functions $v\fromsource$ and $v\todest$ on the sets of accepted nodes $A^H\fromsource$ and $A^H\todest$, respectively. 
Indeed,  $V\fromsource^{H,1}$
(resp.\ $V\todest^{H,1}$) is a function defined on all $X^H$, 
which coincides on the set of accepted nodes $A^H\fromsource$ (resp.\ $A^H\todest$) with the unique fixed point of the update operator, that is the solution 
$V\fromsource^{H}$ (resp.\ $V\todest^{H}$) of the 
discretized equation; elsewhere it may be lower bounded by $V\fromsource^{H}$
(resp.\ $V\todest^{H}$), or $+\infty$. 
Then, $\F_{V^{H,1}}$ is an approximation
of $\F_v$ on the set $A^H\fromsource\cap A^H\todest$
of accepted nodes for both directions, which should be an approximation of
the set of geodesic points. We thus construct an approximation of
$\Oeta$ as follows.
\begin{definition}\label{OHcoarse}
	For a given parameter $\etaH > 0$, we say that a node $x^H \in X^H$ is {\em active} if $x^H\in A^H\fromsource\cap A^H\todest$ and
	\begin{equation}\label{eqactive}
		\F_{V^H} (x^H) \leq \min_{y^H \in X^H} \F_{V^H}(y^H) + \etaH \ .
		\end{equation}
	We denote by $O^H_{\eta}$ the set of all active nodes for the parameter $\etaH$.
	\end{definition}

\subsubsection{Computation in the Fine Grid} Let us denote by $X^h$ a grid discretizing $\Omegab$ with a constant mesh step $h<H$. For the computation in the fine grid, we again have two steps:
\begin{enumerate}
\item Construct the fine grid, by keeping only the nodes of $X^h$ that are in a neighborhood   of the set of \NEW{active} nodes of the coarse grid.
	\item Do a fast marching search in forward direction in this fine grid. 
	\end{enumerate}

More precisely, we select the fine grid nodes as follows: %
\begin{equation}\label{constructfine}
	G^{h}_{\eta} = \{ x^{h} \in X^{h} \ | \ \exists  x^{H} \in O^{H}_{\eta} : \| x^{h} -x^{H} \|_\infty \leq \max (H-h, h) \} \enspace.
\end{equation}

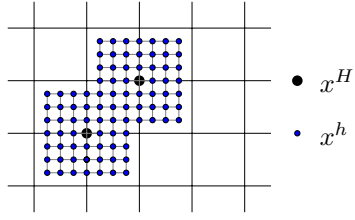
\begin{figure}[htbp]
	\centering
	\begin{tikzpicture}	[scale=0.7]
		\draw[step = 1, help lines, black] (0,0) grid (4,3);
		\fill (1,1) circle (.1);
		\fill (2,2) circle (.1);
		\draw[step = 0.25, help lines, gray] (0.25,0.25) grid (1.75,1.75);
		\draw[step = 0.25, help lines, gray] (1.25,1.75) grid (2.75,2.75);
		\draw[step = 0.25, help lines, gray] (1.75,1.25) grid (2.75,1.75);
		\draw[ help lines, gray] (1.75,1.25) -- (2.75,1.25) (1.25,1.75)--(1.25,2.75) ;
		\draw[ help lines, black] (-0.5,0) -- (0,0) (0,-0.5)--(0,0)(-0.5,1) -- (0,1) (-0.5,2) -- (0,2)(-0.5,3) -- (0,3)(1,-0.5)--(1,0)(2,-0.5)--(2,0)(3,-0.5)--(3,0)(4,-0.5)--(4,0)(0,3)--(0,3.5)(1,3)--(1,3.5)(2,3)--(2,3.5) (3,3)--(3,3.5)(4,3)--(4,3.5)(4,0)--(4.5,0)(4,1)--(4.5,1) (4,2)--(4.5,2) (4,3)--(4.5,3) ;
		\draw[fill = blue] (0.25,0.25) circle (.05)(0.25,0.5) circle (.05)(0.25,0.75) circle (.05)(0.25,1) circle (.05)(0.25,1.25) circle (.05)(0.25,1.5) circle (.05)(0.25,1.75) circle (.05)(0.5,0.25) circle (.05)(0.5,0.5) circle (.05)(0.5,0.75) circle (.05)(0.5,1) circle (.05)(0.5,1.25) circle (.05)(0.5,1.5) circle (.05)(0.5,1.75) circle (.05)(0.75,0.25) circle (.05)(0.75,0.5) circle (.05)(0.75,0.75) circle (.05)(0.75,1) circle (.05)(0.75,1.25) circle (.05)(0.75,1.5) circle (.05)(0.75,1.75) circle (.05)(1,0.25) circle (.05)(1,0.5) circle (.05)(1,0.75) circle (.05)(1,1.25) circle (.05)(1,.5) circle (.05)(1,1.75) circle (.05)(1,1.5) circle (.05)(1.25,0.25) circle (.05)(1.25,0.5) circle (.05)(1.25,0.75) circle (.05)(1.25,1) circle (.05)(1.25,1.25) circle (.05)(1.25,1.5) circle (.05)(1.25,1.75) circle (.05)(1.25,2) circle (.05)(1.25,2.25) circle (.05)(1.25,2.5) circle (.05)(1.25,2.75) circle (.05)(1.5,0.25) circle (.05)(1.5,0.5) circle (.05)(1.5,0.75) circle (.05)(1.5,1) circle (.05)(1.5,1.25) circle (.05)(1.5,1.5) circle (.05)(1.5,1.75) circle (.05)(1.5,2) circle (.05)(1.5,2.25) circle (.05)(1.5,2.5) circle (.05)(1.5,2.75) circle (.05)(1.75,0.25) circle (.05)(1.75,0.5) circle (.05)(1.75,0.75) circle (.05)(1.75,1) circle (.05)(1.75,1.25) circle (.05)(1.75,1.5) circle (.05)(1.75,1.75) circle (.05)(1.75,2) circle (.05)(1.75,2.25) circle (.05)(1.75,2.5) circle (.05)(1.75,2.75) circle (.05)(2,1.25) circle (.05)(2,1.5) circle (.05)(2,1.75) circle (.05)(2,2.25) circle (.05)(2,2.5) circle (.05)(2,2.75) circle (.05)(2.25,1.25) circle (.05)(2.25,1.5) circle (.05)(2.25,1.75) circle (.05)(2.25,2) circle (.05)(2.25,2.25) circle (.05)(2.25,2.5) circle (.05)(2.25,2.75) circle (.05)(2.5,1.25) circle (.05)(2.5,1.5) circle (.05)(2.5,1.75) circle (.05)(2.5,2) circle (.05)(2.5,2.25) circle (.05)(2.5,2.5) circle (.05)(2.5,2.75) circle (.05)(2.75,1.25) circle (.05)(2.75,1.5) circle (.05)(2.75,1.75) circle (.05)(2.75,2) circle (.05)(2.75,2.25) circle (.05)(2.75,2.5) circle (.05)(2.75,2.75) circle (.05);
		
		\fill (5,2) circle (.1);
		\node at (5.25,2)[right] {$x^{H}$};
		\draw[fill = blue] (5,1) circle (.05);
		\node at (5.25,1)[right] {$x^{h}$};
	\end{tikzpicture}
	\caption{Constructing the fine neighborhood $G^{h}_{\eta}$ given two active nodes $x^{H}$} %
\end{figure}
\begin{remark}
As we shall see in \Cref{sec-conv}, we may need to consider mesh steps $H$ and $h$ with $h$ close to $H$, in particular such that $h > \frac{1}{2} H$.
In this case, the bound in  \eqref{constructfine} is equal to $h$.
In general, this bound is more efficient numerically, although any bound
in $[H/2,H]$ would work theoretically.
\end{remark}
To solve the original minimum time problem, the computation will only be done in the selected fine grid nodes, which means that a full fast marching~\Cref{fmalgo} is applied in the restricted fine grid $G^{h}_{\eta} $,
with the update operator of one direction HJ equation (for instance with target set $\destset$).
We will denote by $V^{h,2}\todest$ the approximation of the value function $v\todest$ generated by the above 2-level algorithm on $G^h_\eta$. The complete algorithm is shown in \Cref{2LFM}.
\begin{algorithm}[hbtp]\small
	\caption{Two-Level Fast-Marching Method (2LFMM)} 
	\hspace*{0.02in} {{\bf Input:} Two grids  $X^h$ and $X^H$ with mesh steps $h<H$. The parameter $\eta_{H}>0$. }\\
\hspace*{0.02in} {{\bf Input:} 
Two update operators $\mathcal{U}\todest$ and $\mathcal{U}\fromsource$
adapted to both directions HJ equations.}
\hspace*{0.02in} {{\bf Input:} Target sets: $\sourceset,\destset$.}\\
\hspace*{0.02in} {{\bf Output:} The fine grid \Fine  and approximate value function $V\todest^{h,2}$ on \Fine.}
	\begin{algorithmic}[1]
\State Apply \Cref{fmalgo} with Input grid $X^H$, update operator $\mathcal{U}\todest$, \Start$=\destset\cap X^H$ and \End$=\sourceset^{\eta_H}\cap X^H$, and 
output $V\todest^{H,1}$ and $A^H\todest$.
\State Apply \Cref{fmalgo} with Input grid $X^H$, update operator $\mathcal{U}\fromsource$, \Start$=\sourceset\cap X^H$ and \End$=\destset^{\eta_H}\cap X^H$, and 
output $V\fromsource^{H,1}$ and $A^H\fromsource$.
		\For{Every node $x^{H}$ in $A\fromsource^H \cap A\todest^H$  }
		\If {$\F_{V^{H,1}(x)} \leq \min_{x^H \in X^H} \F_{V^{H,1}}(x^H) + \etaH$ }
		\State Set $x^{H}$ as \Active. 
		\EndIf
		\EndFor
		\State Set \Fine to emptyset.
		\For{Every node $x^{H}$ in the \Active set}
		\For{Every $x^{h} \in X^h$ satisfying $\|x^{h} - x^{H} \|_{\infty} \leq \max \{ H-h,h\} $}
		\If{$x^{h}$ does not exist in set \Fine}
		\State Add $x^{h}$ in the set \Fine. 
		\EndIf
		\EndFor 
		\EndFor
		\State Apply \Cref{fmalgo} with Input grid \Fine, update operator $\mathcal{U}\todest$, \Start$=\destset\cap \Fine$ and \End$=\sourceset\cap \Fine$, and 
output $V\todest^{h,2}$. 
	\end{algorithmic}	\label{2LFM}
\end{algorithm}

\subsubsection{Convergence of~\Cref{2LFM}}\label{section-correct2}
In order to show the correctness of~\Cref{2LFM}, we first show that the computation in the fine grid is equivalent to the approximation of the value
function of a new optimal control problem, with a restricted state space.

For this purpose, one shall first construct a continuous extension 
$O_{\eta}^{H,I}$ of  $O^{H}_{\eta}$ and $G^{h}_{\eta}$. 
Let us extend the approximate value function $V\fromsource^{H,1}$ and $V\todest^{H,1}$ from the nodes of $ X^{H}$ to the whole domain $\Omegab$ by a linear interpolation, and denote them by $V\fromsource^{H.I}, V\todest^{H,I}$ respectively. %
Then,  $O_{\eta}^{H,I}$ is defined as follows
\begin{equation}\label{OHIcoarse}
    O^{H,I}_{\eta} = \{ x \in (\Omega \setminus (\sourceset \cup \destset) ) \mid \F_{V^{H,I}}(x) < \min_{x^H \in X^H} \F_{V^{H,1}}(x^H) + \etaH   \} \enspace.
\end{equation}
Note however that another method may consists in constructing 
the region $O_{\eta}^{H,I}$ in the same way as $G^{h}_{\eta}$,
but without the constraint $x\in X^h$.
Nevertheless, 
$O^{H,I}_{\eta}$ can be thought of as
a continuous version of the set $O^{H}_{\eta}$ of active nodes in coarse grid.
We shall relate $O^{H,I}_{\eta}$ 
to the domain $\Oeta$ defined in \eqref{eqo}  -- the notation ``$I$''  stands for ``interpolation''. 
 We then consider the continuous optimal control problem {\rm (\ref{dynmsys},\ref{disc-cost},\ref{value-disc})} with new state space  $\overline{O^{H,I}_\eta}$, and the following new set of controls which is adapted to the new state constraint:
\begin{equation} 
	\A_{\etaH,x} = \{\alpha \in \mathcal{A} \ | \ y_{\alpha}(x; s) \in  \overline{O_{\eta}^{H,I}}, \text{ for all } s\geq0 \ \} \enspace.
\end{equation}
Denote by $v\todest^{\etaH,I}$ the value function of this new state constrained problem. By \Cref{th_soner}, it is the unique solution of the new state constrained HJ equation $SC(F,$ $O_{\eta}^{H,I},(\partial O_{\eta}^{H,I}) \cap (\partial \destset ))$. In our two level fast marching algorithm, we indeed use the grid $G^h_\eta$ to discretize $ \overline{O_{\eta}^{H,I}}$, then $V\todest^{h,2}$ is an approximation of $v\todest^{\etaH,I}$. %
Then, if $\overline{O_{\eta}^{H,I}}$ is big enough to contain the true optimal trajectories, by the results of \Cref{sec-cotra},  $v\todest^{\etaH,I}$ coincides with $v\todest$ on the optimal trajectories. Then, $V\todest^{h,2}$ is an 
approximation of $v\todest$ on  optimal trajectories.
To present our results, we shall make the following assumptions for the error estimates of the classical numerical scheme.
\begin{assumption}\label{assup_error}
For  every open bounded domain $\Omega'\subset \Omega$ of $\R^d$
containing $\sourceset$ and $\destset$,
we denote by $V^{H,\Omega'}\fromsource$ and $V^{H,\Omega'}\todest$ the 
value functions computed using fast-marching method
on (that is restricted to) the grid $X^H\cap \Omega'$.
Then, there exist constants $C\fromsource, C\todest, C_{\Oc}>0$ and $0 < \gamma \leq 1$, such that, for all $x\in X^H\cap\Omega$, we have 
\begin{equation}\label{eqassup_error_upper}
v\fromsource(x)- V^{H,\Omega}\fromsource(x) 
 \leq C\fromsource H^\gamma , \quad v\todest(x)- V^{H,\Omega}\todest(x)  \leq C\todest H^\gamma \ ,
	\end{equation} 
 and for every $\Omega'$ as above, and every $\eta>0$ and $H>0$
satisfying $\Omega'\supset \overline{\Oc}_{\eta}$, 
with $\eta\geq C_{\Oc} H^{\gamma}$, and  for all $x\in X^H\cap \overline{\Oc}_{\eta/2}$, we have 
\begin{equation}\label{eqassup_error}
V^{H,\Omega'}\fromsource(x) - v\fromsource(x)  \leq C\fromsource H^\gamma , \quad
V^{H,\Omega'}\todest(x) - v\todest(x) \leq C\todest H^\gamma .
	\end{equation} 
\end{assumption}

\begin{remark}\label{remark_error}
The parameter $\gamma$ measures the convergence rate of fast marching method or of the discretization scheme.
In unconstrained cases, the usual finite differences or semi-lagrangian schemes have an error estimate in $O(H^\gamma)$ with $\gamma = \frac{1}{2}$, which entails that \eqref{eqassup_error_upper} and \eqref{eqassup_error} hold for all 
grid points $x$. Moreover, $\gamma$ is equal to $1$ under semiconcavity assumptions (see for instance~\cite{falcone2013semi}).
For state constrained problems, similar estimates have been
established for instance in~\cite{camilli1996approximation,de2023optimal}.
They are stated under further regularity properties on
$\partial \Omega$ and particular discretization grids.
The regularity is needed to derive the bound
\eqref{eqassup_error} for all $x\in X^H\cap \Omega$, when $\Omega'=\Omega$.
Hence, showing that \eqref{eqassup_error} holds for all $x\in X^H\cap \Omega'$,
with constants  $C\fromsource$ and $C\todest$ independent of the set $\Omega'$,
seems to be generally difficult.
However, in the above assumption, we only require \eqref{eqassup_error} to
hold for grid points belonging to $\overline{\Oc}_{\eta/2}$, which are thus
at a distance of at least $\eta/(2 L_v)$ from the boundary of $\Omega'$.
This assumption arises naturally when analysing the convergence and 
complexity of our algorithm. Since the experimental complexity is consistent with 
the theoretical bounds it is tempting to conjecture that the above assumption
holds for the minimum time problem.
This question and more generally the question of the dependence of the
error estimates in the characteristics of the domain 
seems to be of independent interest.
	\end{remark}
	
In the following result, we denote (as for $h=H$) by $V^h\todest$ 
the solution of the discretization of
the HJ equation $SC(F,\Omega,\partial \destset)$ (associated to
Problem \eqref{problem}) on the grid $X^h$, or equivalently the
unique fixed point of $\mathcal{U}\todest$, 
that is the output of \Cref{fmalgo} with input grid $X^h$, update operator $\mathcal{U}\todest$, \Start$=\destset\cap X^h$ and \End$=X^h$.

\begin{theorem}[Convergence of the Two-Level Fast-Marching Method]\label{lemmaOH} $ $
	\begin{enumerate}
		\item\label{lemmaOH-i} 
		Denote $C_\gamma := C\fromsource+C\todest$ and $L_v := L_{v\fromsource}+L_{v\todest}$, where $L_{v\fromsource}$ and $L_{v\todest}$ are the Lipschitz constants of $v\fromsource$ and  $v\todest$, respectively.
		Then, there exists a constant $C_\eta> 0$ depending on 
		$C_\gamma $, $ C_{\Oc}$ and $L_v$, such that for every $\delta'>\delta > 0$, 
		for every 
		$\etaH \geq C_\eta H^{\gamma} + \delta'$, 
		$\overline{O_{\eta}^{H,I}}$ contains the set $\overline{\Od_{\delta'}}\supset \Gamma^{\delta}$, that is the set of $\delta-$geodesic points for the continuous problem {\rm (\ref{dynmsys},\ref{disc-cost},\ref{value-disc})}. In particular, taking $\etaH \geq 2 C_\eta H^\gamma$ and $\delta' = \frac{\etaH}{2}$, we have 
\[ 
		\Gamma^{\delta}\subset	\overline{\Od_{\frac{\etaH}{2}}} \subset \overline{O_{\eta}^{H,I}} \ .
\] 
		\item \label{lemmaOH-ii}
		Taking $\etaH$  and $\delta'=\etaH/2$ as in \eqref{lemmaOH-i}, then for every  $\delta < \etaH/2$ and $x \in X^h \cap \Gamma^{\delta}$, 
\[ 
			| V^{h,2}\todest (x) - v\todest(x) | \leq C\todest h^{\gamma}  \ .
\] 
		Thus,  $V^{h,2}\todest(x)$ converges towards $v\todest(x)$ as $h \to 0$.
	\end{enumerate}
\end{theorem}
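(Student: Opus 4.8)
The plan is to prove the two parts in sequence, since part (ii) relies crucially on the containment established in part (i). For part (i), the goal is to show that the interpolated coarse-grid neighborhood $\overline{O^{H,I}_\eta}$ contains the exact $\delta$-geodesic set $\Gamma^\delta$. The key idea is to pass from the discrete active-node condition to the continuous sublevel-set condition via the error bounds of \Cref{assup_error} and the Lipschitz regularity of $v\fromsource, v\todest$. First I would observe that $\F_{V^{H,I}}$ approximates $\F_v$ in two senses: the linear interpolation introduces an error controlled by the Lipschitz constant $L_v$ times the mesh step $H$, and the discrete values $V^{H,1}\fromsource, V^{H,1}\todest$ differ from $v\fromsource, v\todest$ by at most $C\fromsource H^\gamma, C\todest H^\gamma$ on the accepted nodes (using \eqref{eqassup_error_upper} and \eqref{eqassup_error}). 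Since $\F_v$ is a smooth (bilinear) function of its two arguments on $[0,1]^2$, these bounds combine to give a uniform estimate $|\F_{V^{H,I}}(x) - \F_v(x)| \leq C_\eta H^\gamma$ on the relevant region, where $C_\eta$ absorbs $C_\gamma$, $C_\Oc$, and $L_v$. Feeding this into the definitions \eqref{eqo} and \eqref{OHIcoarse} and using \Cref{delta-tra} (which gives $\Gamma^\delta \subseteq \overline{\Od_\eta}$ for $\eta > \delta$), I would conclude that for $\etaH \geq C_\eta H^\gamma + \delta'$, every point of $\overline{\Od_{\delta'}} \supseteq \Gamma^\delta$ satisfies the defining inequality of $O^{H,I}_\eta$, yielding the containment.

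For part (ii), the strategy is to interpret the fine-grid computation as a numerical scheme for a \emph{reduced} state-constrained problem and then invoke the earlier reduction theory. By the discussion preceding \Cref{assup_error}, the output $V^{h,2}\todest$ is precisely the fast-marching approximation of $v\todest^{\etaH,I}$, the value function of the control problem restricted to the state space $\overline{O^{H,I}_\eta}$, which is the unique viscosity solution of $SC(F, O^{H,I}_\eta, (\partial O^{H,I}_\eta)\cap(\partial\destset))$ by \Cref{th_soner}. The proof then splits into two comparisons. The first is a \emph{discretization error}: applying the upper bound of \Cref{assup_error} (in the form \eqref{eqassup_error}) with the domain $\Omega' = O^{H,I}_\eta$ gives $V^{h,2}\todest(x) - v\todest^{\etaH,I}(x) \leq C\todest h^\gamma$ for $x$ lying in the half-neighborhood $\overline{\Od_{\etaH/2}}$; by part (i) the points $x \in \Gamma^\delta$ lie in this region, and the lower bound \eqref{eqassup_error_upper} handles the reverse inequality. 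The second is a \emph{reduction-exactness} step: I would invoke \Cref{valuefunction} (or its analogue for the $\etaH,I$-reduced problem, paralleling \Cref{lemmatra}) to conclude that $v\todest^{\etaH,I}(x) = v\todest(x)$ for $x \in \Gamma^\delta$, since part (i) guarantees the reduced domain contains the $\delta$-geodesic points and hence the full optimal trajectories through them. Combining the two comparisons yields $|V^{h,2}\todest(x) - v\todest(x)| \leq C\todest h^\gamma$.

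The main obstacle I anticipate is verifying that \Cref{assup_error} can legitimately be applied with the data-dependent domain $\Omega' = O^{H,I}_\eta$ rather than a fixed domain. The assumption requires the error constants $C\todest$ to be uniform as $\Omega'$ ranges over admissible subdomains containing $\sourceset, \destset$, and it demands $\Omega' \supset \overline{\Od_\eta}$ together with $\eta \geq C_\Oc H^\gamma$; one must check that the parameters chosen in part (i) (namely $\etaH \geq 2C_\eta H^\gamma$ and $\delta' = \etaH/2$) are consistent with these hypotheses, and that the grid points of interest genuinely sit at distance at least $\eta/(2L_v)$ from $\partial\Omega'$, so that the interior error bound \eqref{eqassup_error} applies to them. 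This is exactly the delicate point flagged in \Cref{remark_error}: the uniformity of the error estimate over a varying family of domains is itself nontrivial and is taken as a standing assumption rather than proved. A secondary technical care is ensuring that the transition between the discrete set $O^H_\eta$, the interpolated continuous set $O^{H,I}_\eta$, and the fine grid $G^h_\eta$ does not lose the containment $\Gamma^\delta \subseteq \overline{O^{H,I}_\eta}$ under the relation $\mathcal{O}^\mu_{\eta-\mu}\subseteq\Oeta\subseteq\mathcal{O}^\mu_{\eta+\mu}$; this requires carefully tracking the additive slack $\delta' = \etaH/2$ so that it dominates both the $O(H^\gamma)$ discretization error and the $O(H)$ interpolation error simultaneously.
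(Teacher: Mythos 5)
Your proposal is correct and follows essentially the same route as the paper's proof: for part (i) you combine the node-wise error bound of \Cref{assup_error} with the interpolation/Lipschitz error $L_v H$ and \Cref{delta-tra} to obtain $\Gamma^{\delta}\subset\overline{\Od_{\delta'}}\subset\overline{O_{\eta}^{H,I}}$, and for part (ii) you give precisely the paper's two-step argument, namely the discretization-error estimate against $v\todest^{\etaH,I}$ followed by the reduction-exactness identity $v\todest^{\etaH,I}=v\todest$ on $\Gamma^{\delta}$, obtained by sandwiching between $v\todest$ and $v\todest^{\etaH/2}$ and invoking \Cref{valuefunction}. The only point you gloss over---how the discretization and interpolation errors combine through the bilinear map $\F$, which the paper handles by showing that the extrema of $\F_{V^{H,I}}$ on each interpolation polytope are attained at grid vertices because both interpolants take values in $[0,1]$---is a fillable technicality, so the two approaches coincide.
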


\begin{proof}
  Let us prove Point \eqref{lemmaOH-i}.
  By~\Cref{assump_trajec} and \Cref{assup_error}, for any $x^H\in X^H$,
  we have:
  \begin{equation}\label{1ineq}
    |\F_{V^{H}}(x^H) - \F_v(x^H) | \leq C_\gamma H^{\gamma} \; ,
  \end{equation}
  where $C_\gamma = C\fromsource+C\todest$.
  Moreover, using the Lipschitz continuity of $v\fromsource$ and $ v\todest$,
  and denoting $L_v = L_{v\fromsource}+L_{v\todest}$,   
  we obtain, for any $x\in\Omegab$ and $x^H\in X^H$ such that $\|x^H-x\|\leq H$: 
	\begin{equation}
          \label{2ineq}  |\F_{V^{H}}(x^H) - \F_v(x) | \leq
          C_\gamma H^{\gamma}+L_v H \enspace .\end{equation}
        Applying \eqref{1ineq} and $X^H\subset \Omegab$, 
        we get 
        \begin{equation}\label{lowerboundH}
          \min_{x^H \in X^H} \F_{V^H}(x^H) + C_\gamma H^{\gamma} \geq \min_{x \in \Omega} \F_{v}(x)\enspace .\end{equation}
Assume that $x$ is in a $d$-dimensional polytope with vertices in 
$X^H$ and that $V^{H,I}\todest$ and $V^{H,I}\fromsource$ are linear or affine 
on this polytope. One can show, using that both functions $V^{H,I}\todest$ and $V^{H,I}\fromsource$ take their values in
$[0,1]$, that the maximum (and minimum) of  $\F_{V^{H,I}}$ on this polytope is attained on
the vertices, so on points of $X^H$ (although in some cases the function
is concave).
Using  this property with  \eqref{2ineq}, we obtain
\[  \F_{v}(x) \geq \F_{V^{H,I}}(x) - (C_\gamma H^{\gamma}+L_v H) \enspace .\]
Let us assume now that $x\in \overline{\Omega \setminus (\sourceset \cup \destset)}\setminus \overline{O^{H,I}_{\eta}}$.
Then, we deduce from the previous inequalities:
\[ 
		\begin{aligned}
			\F_{v}(x) 
			&\geq  \min_{x^H \in X^H} \F_{V^H}(x^H) +\etaH - (C_\gamma H^{\gamma} + L_v H) \\
			& \geq \min_{x \in \Omega} \F_v(x) + \etaH - (2C_\gamma H^{\gamma} + L_v H) \ .
		\end{aligned}
\] %
	Thus, if we take $\etaH \geq  2C_\gamma H^{\gamma} + L_v H + \delta'$, we obtain $x \notin \Od_{\delta'}$. 	This shows that $\Od_{\delta'}\subset  \overline{O^{H,I}_{\eta}}$ and thus $\overline{\Od_{\delta'}}\subset  \overline{O^{H,I}_{\eta}}$. 
Since $\gamma \leq 1$, we can take 
	$\etaH \geq C_\eta H^{\gamma} +\delta' $, with an appropriate constant $C_\eta$ and also $\etaH \geq 2 C_\eta H^{\gamma} $ with $\delta'= \etaH/2$,
 that is the result of Point \eqref{lemmaOH-i}. 
	
	As for Point~\eqref{lemmaOH-ii}, taking $\etaH$ and $\delta'=\etaH/2$ as in~\eqref{lemmaOH-i}, we first notice that for every $x \in\overline{O^{H,I}_{\eta}}\cap X^h$, we have
	\begin{equation}\label{prove_ii_1}
		| V^{h,2}\todest (x) - v^{\eta_H,I}\todest (x) | \leq C\todest h^\gamma \ ,
	\end{equation}
	by~\Cref{assup_error}. 
	Since $ \overline{\Od_{\frac{\etaH}{2}}} \subset \overline{O^{H,I}_{\eta}} \subset \Omegab$, we have for every $x \in \Od_{\frac{\etaH}{2}}$: 
	\begin{equation}\label{prove_ii_2}
		v\todest(x) \leq v^{\eta_H,I}\todest(x) \leq v^{\frac{\etaH}{2}}\todest (x) \ .
	\end{equation}
	By~\Cref{valuefunction}, we have that for every $\delta < \etaH/2$ and $x \in \Gamma^{\delta}\subset  \overline{\Od_{\frac{\etaH}{2}}}$, $v\todest(x) = v^{\frac{\etaH}{2}}\todest (x)$. Thus we get an equality in~\eqref{prove_ii_2}. Replacing $ v^{\etaH,I}\todest$ by $v\todest$ in~\eqref{prove_ii_1}, we obtain the result of Point~\eqref{lemmaOH-ii}.
\end{proof}

\subsection{Multi-level Fast Marching Method}
The computation in the two-level coarse/fine grid can be extended to the multi-level case. We construct finer and finer grids, considering the fine grid
of the previous step as the coarse grid of the current step, and defining the next
fine grid by selecting the actives nodes of this coarse grid.
\subsubsection{Computation in Multi-level Grids} Consider a $N$-level family of grids with successive mesh steps $H_{1} \geq H_{2} \geq \dots \geq H_{N-1} \geq H_{N} = h,$ denoted $X^{H_i}$, for $i=1,\ldots, N$. Given a family of real positive parameters $\{\eta_1, \eta_2, \dots ,\eta_{N-1} \}$, the computation works as follows:

\textbf{\textit{Level-$1$:}} In the first level, the computations are the same as in the coarse grid of the two-level method (\Cref{2lfm_coarse}), with mesh step $H$ equal to $H_1$ and active nodes selected using the parameter $\eta$ equal to $\eta_1$. At the end of level-$1$, we get a set of active nodes: 
$O^{H_1}_{\eta_1}$.

\textbf{\textit{Level-$l$ with $1<l<N$:}} In level-$l$, we already know the set of active nodes in level-$(l-1)$, denoted $O^{H_{l-1}}_{\eta_{l-1}}$. 
We first construct the "fine grid" set of level-$l$ as in \eqref{constructfine}, that is:
\begin{equation}
	G^{H_{l}}_{\eta_{l-1}} \!=\! \{ x^{H_{l}} \!\in X^{H_{l}}\!\mid\! \exists \ x^{H_{l-1}} \!\in O^{H_{l-1}}_{\eta_{l-1}} : \| x^{H_{l-1}} - x^{H_{l}} \|_\infty \leq \max (H_{l-1}-H_{l},H_{l})  \} .
\end{equation}
Then, we perform the fast marching in both directions in the grid $G^{H_{l}}_{\eta_{l-1}}$. 
This leads to the approximations $V\fromsource^{H_{l},l} $,  $V\todest ^{H_{l},l} $
and $\F_{V^{H_l,l}}$ of $v\fromsource$,  $v\todest$ and
$\F_v$ on grid $G^{H_{l}}_{\eta_{l-1}}$. 
 We then select the active nodes in level-$l$, by using the parameter $\eta_l$, as follows:
\begin{equation}
	O^{H_{l}}_{\eta_{l}} = \{ x^{H_{l}} \in G^{H_{l}}_{\eta_{l-1}} \ | \ \F_{V^{H_l,l}} (x^{H_l}) \leq \min_{ x^{H_{l}} \in G^{H_{l}}_{\eta_{l-1}} } \F_{V^{H_l,l}} (x^{H_l}) + \eta_l \ \} \enspace. 
\end{equation}

\textbf{\textit{Level-N:}} In the last level, we only construct the final fine grid:
\begin{equation}
	G^{h}_{\eta_{N-1}} = \{ x^{h} \!\in X^{h} \!\mid\! \exists \ x^{H_{N-1}} \!\in O^{H_{N-1}}_{\eta_{N-1}} : \| x^{h} - x^{H_{N-1}} \|_\infty \leq \max (H_{N-1}-h, h )  \} .
\end{equation}
Then, we only do the fast marching search in one direction in the grid $G^h_{\eta_{N-1}}$ and obtain the approximation $V\todest ^{h,N} $ of $v\todest$
on grid $G^{h}_{\eta_{N-1}}$. 

This is detailed in \Cref{MLFM}. Some admissible grids generated  by our algorithm are shown in \Cref{sketchmlfh}.
\begin{figure}[H]
	\centering
	\subfigure[Level-0]{
		\includegraphics[width=0.18\textwidth]{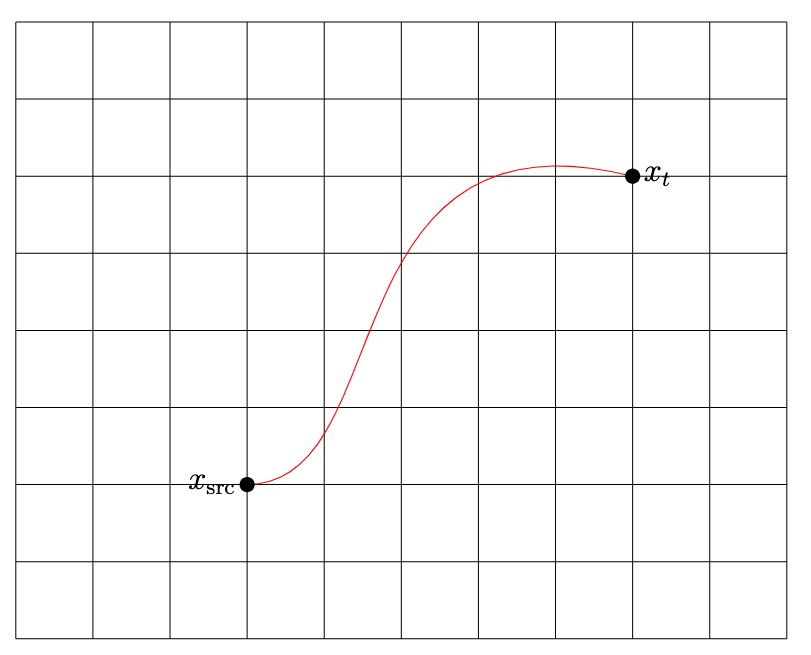}}
	\subfigure[Active Nodes]{
		\includegraphics[width=0.185\textwidth]{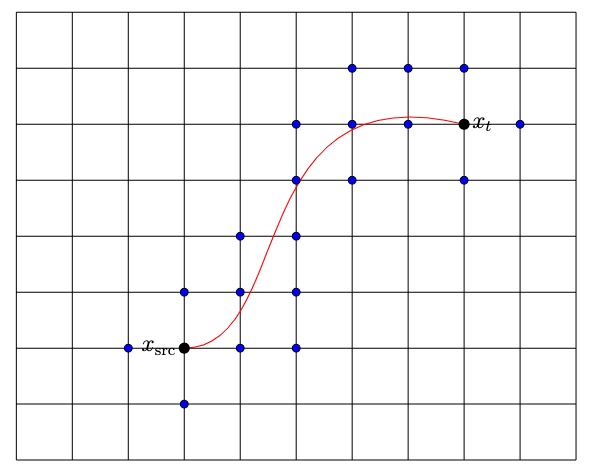}}
	\subfigure[Fine grid]{
		\includegraphics[width=0.155\textwidth]{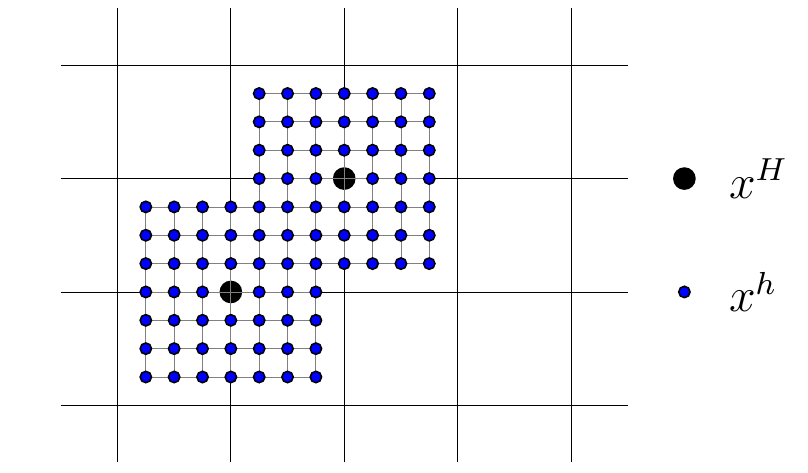}}
	\subfigure[Level-1]{
		\includegraphics[width=0.185\textwidth]{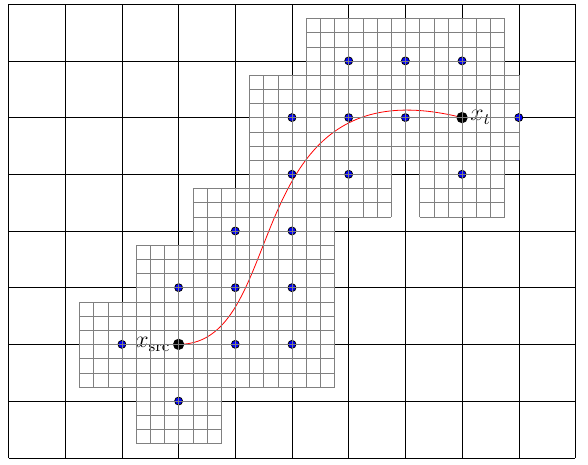}}
	\dots
	\subfigure[Level-2]{
		\includegraphics[width=0.15\textwidth]{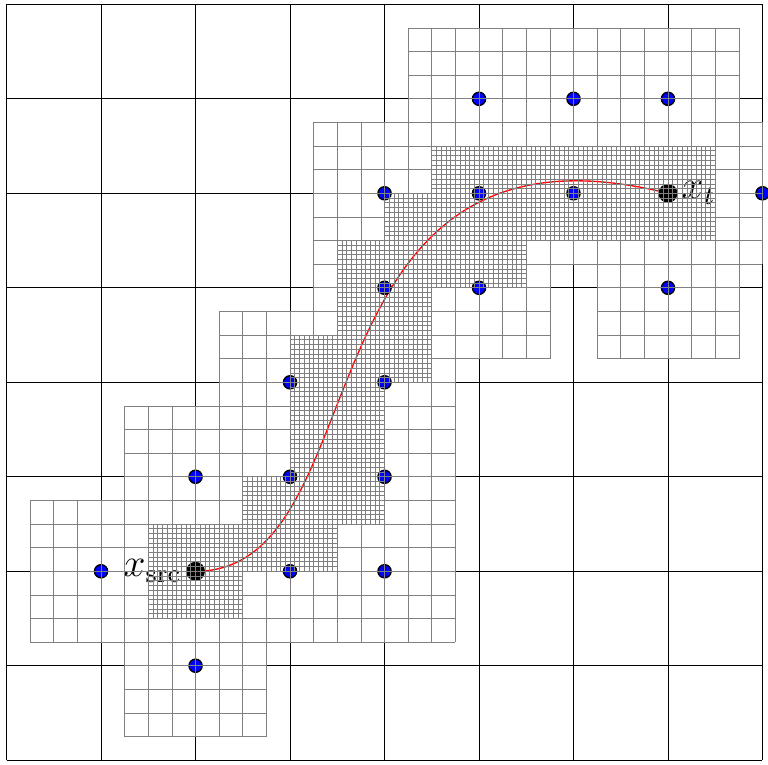}}
	
	\caption{Sketch of MLFMM.}
	\label{sketchmlfh}
\end{figure}
\begin{algorithm}[htbp]\small
	\caption{Multi-Level Fast-Marching Method (MLFMM)} 
		\hspace*{0.02in} {{\bf Input:} The mesh steps, grids, and selection parameters: $H_{l},X^{H_l},\eta_{l}$, for $l \in \{1,2,\dots,N\}$. }\\
	\hspace*{0.02in} {{\bf Input:} 
		Update operators $\mathcal{U}\todest$ and $\mathcal{U}\fromsource$
		adapted to both directions HJ equations and different levels.} \\
\hspace*{0.02in} {{\bf Input:} Target sets: $\sourceset,\destset$.}\\
\hspace*{0.02in} {{\bf Output:} The final fine grid \Fine  and approximate value function $V\todest^{h,N}$ on \Fine.}
	\begin{algorithmic}[1]
		\State 	Set \Coarsegrid to $X^{H_1}$.
		\For {$l = 1 \text{ to } N-1$}
		\State Do the partial fast marching search in \Coarsegrid \ in both directions. 
		\State Select the \Active nodes from the \Accepted nodes using $\eta_{l}$.
		\State Select the \Fine nodes based on the \Active nodes, and mesh step $H_{l+1}$.
		\State Let \Fine in current level be the new \Coarsegrid.
	\EndFor
	\State Do the partial fast marching search in only one direction in \Fine.
	\end{algorithmic}	\label{MLFM}
\end{algorithm}

In \Cref{MLFM}, 
Line-3 of  \Cref{MLFM} corresponds to lines-1 and 2 in \Cref{2LFM}, 
line-4 of  \Cref{MLFM}  corresponds to lines-3 to 7 in \Cref{2LFM}, 
line-5  of  \Cref{MLFM} corresponds to line-8 to line-15 in \Cref{2LFM}. 
\subsubsection{Convergence of~\Cref{MLFM}}
In each level-$l$ with $l<N$, 
we have the approximate value functions $V^{H_{l},l}\fromsource$ and $V^{H_{l},l}\todest$ of $v\fromsource$ and $v\todest$ on the grid of level $l$. 
Then, we can apply the same constructions as in \Cref{section-correct2}
for the coarse grid $X^H$. This leads to the following continuous version
of the set of active nodes in level-$l$:
\[ O_{\eta_{l}}^{H_{l},I} = \{ x \in (\Omega \setminus (\sourceset \cup \destset) ) \mid 
\F_{V^{H_l,I}}(x) < \min_{x^H \in  G^{H_{l}}_{\eta_{l-1}} }\F_{V^{H_l,l}}(x^H) + \eta_l   \} \enspace.
\]
We also have sets $\mathcal{A}_{\eta_{l},x}$ of controls adapted 
to the optimal control problems of the form
{\rm (\ref{dynmsys},\ref{disc-cost},\ref{value-disc})}
with state space equal to the set $O_{\eta_{l}}^{H_{l},I}$,
and the corresponding value function $v^{\eta_{l},I}\todest$,
which,
by \Cref{th_soner}, is the unique solution of the new state constrained HJ equation $SC(F,O_{\eta_{l}}^{H_{l},I}, (\partial O_{\eta_{l}}^{H_{l},I}) \cap (\partial \destset))$. 
We can also consider the HJ equations in the direction "from source",
$SC(\tilde{F},O_{\eta_{l}}^{H_{l},I},  (\partial O_{\eta_{l}}^{H_{l},I}) \cap (\partial \sourceset))$, and the corresponding value function $v^{\eta_{l},I}\fromsource$. 
We obtain the following convergence result for \Cref{MLFM}.
\begin{theorem}[Convergence of the Multi-level Fast-Marching Method] 
	\label{theo-corr-multi} $ $
	\begin{enumerate}
		\item\label{proofmlfm_i}
There exists constants $C_\eta> 0$ and $\kappa>0$ such that, if for every $l \in \{ 1,\dots, N-1 \}$, $\eta_l = C_\eta (H_l)^{\gamma}$ and $H_l/H_{l+1}\geq \kappa$, then for all $\delta<\frac{\eta_l}{2}$, the set $\overline{O^{H_l,I}_{\eta_l}}$ contains $\overline{\Od_{\frac{\eta_l}{2}}}\supset \Gamma^\delta$, that is the set of $\delta$-
geodesic points for the continuous minimum time problem{\rm (\ref{dynmsys},\ref{disc-cost},\ref{value-disc})}. 
		\item\label{proofmlfm_ii} %
 Taking $\eta_l$ as proposed in \eqref{proofmlfm_i}, then 
		for every $l \in \{1,2, \dots, N-1 \}$, 
$\delta <\frac{\eta_{l}}{2}$, and 
$x \in X^{H_{l+1}}\cap \Gamma^\delta$, we have 
\[ 
			| V^{H_{l+1},l+1}\todest (x) - v\todest(x) | \leq C\todest (H_{l+1})^\gamma, \ | V^{H_{l+1},l+1}\fromsource (x) - v\fromsource(x) | \leq C\fromsource (H_{l+1})^\gamma \ . 
\] 
		Thus, $ V^{h,N}\todest(x)$ converges towards $v\todest(x)$ as $h \to 0$.
	\end{enumerate}
\end{theorem}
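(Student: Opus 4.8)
The plan is to induct on the level index $l$, at each step reproducing the two arguments of \Cref{lemmaOH} but with the full domain $\Omega$ replaced by the domain $O^{H_{l-1},I}_{\eta_{l-1}}$ constructed at the previous level. The only genuinely new point is to verify that this restricted domain is an admissible choice of $\Omega'$ in \Cref{assup_error} at the current mesh step; the ratio condition $H_l/H_{l+1}\geq\kappa$ exists precisely to enforce this uniformly in $l$, and a crucial feature of \Cref{assup_error} is that its constants $C\fromsource,C\todest$ do not depend on $\Omega'$, so that the error does not degrade along the hierarchy.

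First I would treat the base case $l=1$, where the search is run on the full grid $X^{H_1}$, so that \Cref{assup_error} applies with $\Omega'=\Omega$ and Point \eqref{proofmlfm_i} for $l=1$ is exactly \Cref{lemmaOH} with $H=H_1$ and $\eta_H=\eta_1=C_\eta H_1^\gamma$, yielding $\overline{\Od_{\eta_1/2}}\subset\overline{O^{H_1,I}_{\eta_1}}$ and, via \Cref{delta-tra}, $\Gamma^\delta\subset\overline{\Od_{\eta_1/2}}$ for $\delta<\eta_1/2$, provided the multilevel constant $C_\eta$ is chosen at least twice as large as the one appearing in \Cref{lemmaOH}. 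For the inductive step I would assume $\overline{O^{H_{l-1},I}_{\eta_{l-1}}}\supset\overline{\Od_{\eta_{l-1}/2}}$ and note that the level-$l$ search is performed on $G^{H_l}_{\eta_{l-1}}$, a discretization of $\overline{O^{H_{l-1},I}_{\eta_{l-1}}}$. To invoke \eqref{eqassup_error} with $\Omega'=O^{H_{l-1},I}_{\eta_{l-1}}$ at mesh step $H_l$ I must exhibit $\eta$ with $\Omega'\supset\overline{\Od_\eta}$ and $\eta\geq C_\Oc H_l^\gamma$; the inductive hypothesis allows $\eta=\eta_{l-1}/2=C_\eta H_{l-1}^\gamma/2$, and the requirement $\eta\geq C_\Oc H_l^\gamma$ reduces to $(H_{l-1}/H_l)^\gamma\geq 2C_\Oc/C_\eta$, hence holds once $\kappa\geq(2C_\Oc/C_\eta)^{1/\gamma}$. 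Two observations then reinstate the ingredients \eqref{1ineq}, \eqref{2ineq} and \eqref{lowerboundH} of \Cref{lemmaOH}: the \emph{lower} bound $V^{H_l,l}\geq v-CH_l^\gamma$ holds at \emph{every} node, since restricting the domain only raises the fast-marching value (so \eqref{eqassup_error_upper} transfers through the monotonicity $V^{H_l,O^{H_{l-1},I}_{\eta_{l-1}}}\geq V^{H_l,\Omega}$) and $\F_v$ is monotone nondecreasing in both of its arguments; whereas the \emph{upper} bound $V^{H_l,l}-v\leq CH_l^\gamma$ holds on $\overline{\Od_{\eta_{l-1}/4}}$ by \eqref{eqassup_error}. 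Restricting attention to the thinner tube $\overline{\Od_{\eta_l/2}}$, which sits well inside $\overline{\Od_{\eta_{l-1}/4}}$ once $\kappa$ is large enough that $\eta_l/2+L_v H_l\leq\eta_{l-1}/4$, both bounds are available at the relevant nodes, and rerunning the computation of Point \eqref{lemmaOH-i} of \Cref{lemmaOH} with $\eta_l=C_\eta H_l^\gamma$ gives $\overline{\Od_{\eta_l/2}}\subset\overline{O^{H_l,I}_{\eta_l}}$; the inclusion $\Gamma^\delta\subset\overline{\Od_{\eta_l/2}}$ for $\delta<\eta_l/2$ is once more \Cref{delta-tra}. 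This closes the induction and proves Point \eqref{proofmlfm_i}.

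Point \eqref{proofmlfm_ii} then follows as in Point \eqref{lemmaOH-ii} of \Cref{lemmaOH}. For $x\in X^{H_{l+1}}\cap\Gamma^\delta$ with $\delta<\eta_l/2$, Point \eqref{proofmlfm_i} places $x$ in $\overline{\Od_{\eta_l/2}}\subset\overline{O^{H_l,I}_{\eta_l}}\subset\Omega$; applying \Cref{assup_error} at mesh step $H_{l+1}$ with $\Omega'=O^{H_l,I}_{\eta_l}$ (admissible by the identical computation, now relating levels $l$ and $l+1$) gives, as in \eqref{prove_ii_1}, the error bound for $V^{H_{l+1},l+1}\todest$ against the constrained value $v^{\eta_l,I}\todest$. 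Finally the squeeze $v\todest(x)\leq v^{\eta_l,I}\todest(x)\leq v^{\eta_l/2}\todest(x)$ coming from the domain nesting $\overline{\Od_{\eta_l/2}}\subset\overline{O^{H_l,I}_{\eta_l}}\subset\Omega$, together with \Cref{valuefunction}, which gives $v^{\eta_l/2}\todest(x)=v\todest(x)$ on $\Gamma^\delta$, forces $v^{\eta_l,I}\todest(x)=v\todest(x)$ and hence the stated bound; the ``from source'' estimate is identical, and the case $l=N-1$ yields the convergence of $V^{h,N}\todest$.

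The step I expect to be the main obstacle is the uniform, level-independent choice of the pair $(C_\eta,\kappa)$: one must simultaneously keep every restricted domain large enough to be an admissible $\Omega'$ at the next finer scale -- so that the nodes where \eqref{eqassup_error} may fail stay at a controlled distance (of order $\eta_{l-1}/L_v$) from the geodesic tube -- and keep the shrinking tubes $\overline{\Od_{\eta_l/2}}$ nested inside the error-controlled regions $\overline{\Od_{\eta_{l-1}/4}}$ of the previous level. Since every inequality involved is homogeneous in $H_l$ with constants depending only on $C_\gamma$, $C_\Oc$, $L_v$ and $\gamma$, a single pair $(C_\eta,\kappa)$ serves at all levels; verifying this, together with the transfer of the lower error bound to restricted domains through monotonicity, is where the argument needs genuine care, the remainder being a transcription of the two-level proof.
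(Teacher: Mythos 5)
Your proposal is correct and follows essentially the same route as the paper's proof: induction on the level, with the base case given by \Cref{lemmaOH}, monotonicity under domain restriction supplying the lower error bound $\F_{V^{H_l,l}}\geq \F_v - C_\gamma H_l^\gamma$, the previous level's tube inclusion (plus the ratio condition on $H_{l-1}/H_l$) supplying the upper bound near $\Od_{\eta_l/2}$, and the same squeeze through $v^{\eta_l,I}\todest$ and \Cref{valuefunction} for Point (ii). The only difference is organizational: the paper's induction carries both Points (i) and (ii) and invokes Point (ii) at level $l-1$ (valid on $X^{H_l}\cap\Gamma^\delta$, $\delta<\eta_{l-1}/2$, via \Cref{Oinclu-gamma}) to get the upper bound, whereas you carry only Point (i) and re-derive that bound directly from \eqref{eqassup_error} with $\Omega'=O^{H_{l-1},I}_{\eta_{l-1}}$ on $\overline{\Od_{\eta_{l-1}/4}}$ — the same ingredients, packaged slightly differently and with marginally different constants in the admissible $\kappa$.
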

\begin{proof}
  When $l=1$, Points \eqref{proofmlfm_i} and \eqref{proofmlfm_ii} of \Cref{theo-corr-multi} follow from the corresponding   points in~\Cref{lemmaOH}: $\overline{\Od_{ \frac{\eta_1}{2} }} \subset \overline{O^{H_1,I}_{\eta_1}}$, and for every $x \in H^{H_2} \cap \Gamma^\delta$ with $\delta< {\frac{\eta_1}{2}}$:
	\begin{equation}\label{prove-l-1}
	| V^{H_2,2}\todest(x) - v\todest(x) | \leq C\todest H_2^\gamma, \quad 
	|V^{H_2,2}\fromsource (x) - v\fromsource(x)| \leq C\fromsource H_2^\gamma  \ .
	\end{equation} 
	Assume now that Points \eqref{proofmlfm_i} and \eqref{proofmlfm_ii} of~\Cref{theo-corr-multi} hold for $l=k-1$, with an arbitrary $k\geq 2$. 
	We first prove Point \eqref{proofmlfm_i} for $l=k$, that is  $\Od_{\frac{\eta_{k}}{2}} \subset \overline{O^{H_{k},I}_{\eta_{k}}}$. 
	Let us denote as before by $V^{H_{k}}\fromsource$, $V^{H_{k}}\todest$ the solutions in the grid $X^{H_{k}}$ of the discretized equation in two directions, respectively. From~\Cref{assup_error}, we obtain \eqref{1ineq},
        which in the case $H=H_k$ writes 
	\begin{equation}
		\begin{aligned}
		  &\sup_{x \in X^{H_{k}}} \|\F_{V^{H_{k}}} (x) - \F_v(x) \| \leq C_\gamma (H_{k})^\gamma  \enspace .
		%
		\end{aligned}
	\end{equation}
	Moreover, we notice that, for every $x \in G^{H_{k}}_{\eta_{k-1}}\subset X^{H_k}$, we have
		$V^{H_{k},k}\fromsource (x) \geq V^{H_{k}}\fromsource (x)$ and 
		$V^{H_{k},k}\todest(x) \geq V^{H_{k}}\todest(x) $. 
        Hence, 
        \begin{equation}
	  \F_{V^{H_{k},k}} (x) \geq \F_{V^{H_{k}}} (x)
          \geq \F_v(x)-  C_\gamma (H_{k})^\gamma            \ ,
	\end{equation}
        which is similar to \eqref{lowerboundH} for $H=H_k$.        
	Consider a point $x \in \overline{ \Omega \setminus (\sourceset \cup \destset) } \setminus \overline{O^{H_{k},I}_{\eta_{k}}}$. 
        By definition of $\overline{O^{H_{k},I}_{\eta_{k}}}$, we have 
	\begin{equation}\label{prove-l-2}
		\begin{aligned}
			\F_{V^{H_{k},I}}(x) &\geq  \min_{x^{H_{k}} \in G^{H_{k}}_{\eta_{k-1}}} \F_{V^{H_{k},k}} (x^{H_{k}}) + \eta_{k} \\\ 
			& \geq \min_{x^{H_{k}} \in X^{H_{k}}} \F_v(x^{H_{k}}) + \eta_{k} - C_\gamma (H_{k})^\gamma  \\
			& \geq \min_{x \in \Omega} \F_v(x) + \eta_{k} - C_\gamma (H_{k})^\gamma  \ .
		\end{aligned}
	\end{equation} 
	Assume also that $x \in \Od_{\frac{\eta_{k}}{2}}$. Let us denote
	\begin{equation} 
		B^{H_{k}} (x)= \{ x^{H_{k}}  \in X^{H_{k}} \mid  \| x - x^{H_{k}} \|_\infty \leq H_{k} \  \} \  , 
	\end{equation}
	and assume further that $B^{H_{k}} (x) \subset \Gamma^\delta$
        for some $\delta<\frac{\eta_{k-1}}{2}$.
        Using \Cref{Oinclu-gamma}, and the Lipschitz continuity of $\F_v$,
        we get this property as soon as
        $\eta_k+2 L_v H_k < \eta_{k-1}$.
        Since we assumed that Point~\eqref{proofmlfm_i} holds for $l=k-1$,
        and $ B^{H_{k}} (x)\subset X^{H_k}\cap\Gamma^\delta$,
        we have
        \[ \|\F_{V^{H_{k},k}} (x^{H_{k}}) - \F_v(x^{H_{k}}) \| \leq C_\gamma (H_{k})^\gamma \enspace ,\]
        for all $x^{H_{k}}\in B^{H_{k}} (x)$.
        Then, by the same arguments as in the proof of \Cref{lemmaOH},
        using~\eqref{prove-l-2}, we obtain
        	\begin{equation}\label{prove-l-3}
		\begin{aligned}
			\F_{V^{H_{k},I}} (x) &\leq \max_{x^{H_{k}}\in B^{H_{k}}(x)}  \F_{V^{H_{k},k}}(x^{H_{k}}) 
		 \leq \F_v(x) + C_\gamma (H_{k})^\gamma + L_v H_{k} \\
			& \leq \min_{y \in \Omega} \F_v(y) + \frac{\eta_{k}}{2} + C_\gamma (H_{k})^\gamma + L_v H_{k} \\
                        & \leq 	\F_{V^{H_{k},I}} (x) - \frac{\eta_{k}}{2} + (2 C_\gamma (H_{k})^\gamma + L_v H_{k}) \\
		\end{aligned}
	        \end{equation}
                If $\eta_k$ satisfies also
                $\eta_{k} \geq 4C_\gamma (H_{k})^\gamma + 2L_v H_{k}$,
                we get a contradiction.
	        Since $\gamma \leq 1$, this condition is satisfied as soon as
                $\eta_{k} \geq C_\eta (H_{k})^\gamma$ with some appropriate constants $C_\eta$. If $\eta_k$ also satisfies $\eta_{k} \leq C'_\eta (H_{k})^\gamma$ for some constant $C'_\eta>C_\eta$, we get that the above condition
                $\eta_k+2 L_v H_k< \eta_{k-1}$ holds as soon as
                $H_k\leq 1$ and $\frac{C'_\eta+2 L_v}{C_\eta} \leq (\frac{H_{k-1}}{H_{k}})^\gamma$.
                Under these conditions, 
                we deduce that for all $\delta<\frac{\eta_{k}}{2}$,
                we have  $\Gamma^{\delta}\subset \Od_{\frac{\eta_{k}}{2}} \subset \overline{O^{H_{k},I}_{\eta_{k}}}$.

	        By the same argument as in the proof of Point~\eqref{lemmaOH-ii} of~\Cref{lemmaOH}, we deduce Point~\eqref{proofmlfm_ii} of~\Cref{theo-corr-multi} for $l=k$ from Point~\eqref{proofmlfm_ii} of~\Cref{theo-corr-multi} for the same $l=k$.
	The result of~\Cref{theo-corr-multi} follows from the induction on $l$.
\end{proof}

\section{Computational Complexity}\label{sec-conv}
We now analyze the space and time complexity of our algorithm, and determine the optimal parameters to tune the algorithm.
We always make the following assumption:
\begin{assumption}\label{asspf_2}
The domain $\Omegab$ is convex and 
	there exist constants $\lowerboundf,\upperboundf$ such that:	
	$        0<\lowerboundf\leq f(x,\alpha)\leq \upperboundf <\infty, \text{ for all } x\in \Omega \text{ and } \alpha \in \mathcal{A}$.
\end{assumption}
Consider first the two-level case, and suppose we want to get a numerical approximation of the value of Problem \eqref{problem} with an error bounded by some given $\varepsilon>0$, and a minimal total complexity.
Three parameters should be fixed before the computation: 
\begin{enumerate}
\item The mesh step of the fine grid $h$;
	\item The mesh step of the coarse grid $H$;
	\item The parameter $\etaH$ to select the active nodes in coarse grid.
\end{enumerate} 
The parameter $h$ needs to be small enough so that $C\todest h^\gamma\leq \varepsilon$. 
The parameter $\etaH$ is used to ensure that the subdomain $O^{H,I}_{\eta}$ does contain the true optimal trajectories, so it should be large enough
as a function of $H$, see \Cref{lemmaOH}, but we also
want it to be as small as possible to reduce the complexity.
Using the optimal values of $h$ and $\etaH$, the total
complexity becomes a function of $H$, when $\varepsilon$ (or $h$) is fixed. 
Then, one needs to choose the parameter $H$ to minimize
this total complexity. 

In order to estimate this total complexity, we shall make the following assumption on the neighborhood of optimal trajectories:
\begin{assumption}\label{distance_beta}
The set of geodesic points $\Gamma^*$ consists of a finite number of optimal paths between $\sourceset$ and $\destset$. Moreover, 
	for every $x\in \mathcal{O}_{\eta}$, there exists $x^*\in \Gamma^*$ such that :
	$\|x - x^*\| \leq C_\beta \eta^\beta$, 
	where $0<\beta \leq 1$, and $C_\beta$ is a positive constant.
	\end{assumption} 

\begin{remark}\label{rk-beta}
  The above constant $\beta$ depends on the geometry of the level sets of the value function.
In typical situations in which the value
function is smooth with a nondegenerate Hessian in the neighborhood
of an optimal trajectory, one has $\beta=1/2$.
However, in general situations, it can take all the possible values in $(0,1]$.
Indeed, consider for instance the minimum time between two points in the $x-$axis, $(-1,0)$ and $(1,0)$, 
 in a 2-dimensional space with $f(x,\alpha)= 1/\|\alpha\|_p$, where
$\|\cdot\|_p$ is the $L^p$ norm, with $p\in [1,\infty]$. This is equivalent to a problem with a dynamics independent of state and a direction chosen from the unit $L^p$ sphere instead of the $L^2$ sphere.
Then, one can show that for $p\neq \infty$, the straight line is the unique optimal path, and that the value function satisfies \Cref{distance_beta} with $\beta=1/p\in (0,1]$. However, if $p=\infty$, the number of optimal paths is infinite, so
\Cref{distance_beta} is not satisfied.
  \end{remark}
Let us denote by $D$ the maximum Euclidean distance between the points in $\sourceset$ and $\destset$, i.e.\ $D = \sup \{ \|x-y \| \mid x\in \sourceset, y \in \destset  \}$, and by $D_\Omega$  the diameter of $\Omegab$, i.e.\ 
$D = \sup \{ \|x-y \| \mid x,y\in \Omegab  \}$.
Recall that for any positive functions $f,g:\R^p\to(0,+\infty)$ of $p$ real parameters,
the notation $g(x)=\widetilde{O}(f(x))$ means
$g(x)=O(f(x)(\log(f(x)))^q)$ for some integer $q$,
that is $|g(x)|\leq C f(x)|\log(f(x))|^q$ for some constant $C>0$.
We have the folowing estimate of the space complexity.
\begin{proposition}\label{complexity-2level}
Assume that $H\leq D$, and that $\etaH$ satisfies the condition of \Cref{lemmaOH} and $\etaH\leq D$.
There exists a constant $C>0$  depending on $D_\Omega$, 
$D$, $\upperboundf$ and $\lowerboundf$, 
$\beta$, $\gamma$, $C_\beta$, $C_\gamma$ and $L_v$ (see \Cref{lemmaOH}),
such that
       the space complexity $\spacecom(H,h)$ of the two level fast marching algorithm with coarse grid mesh step $H$, fine grid mesh step $h$ and parameter $\eta_H$  is as follows:
	\begin{equation} \label{e-complexity}
		\spacecom(H,h) = \widetilde{O}\Big( C^d \Big( \frac{1}{H^d} + \frac{(\eta_H)^{\beta(d-1)}}{h^d} \Big)\Big) \enspace.
	\end{equation}
\end{proposition}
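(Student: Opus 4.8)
The plan is to split $\spacecom(H,h)$ into the storage consumed by the two initial fast-marching sweeps on the full coarse grid $X^H$ and the storage consumed by the sweep on the restricted fine grid $G^h_\eta$, and to estimate each term by the elementary counting principle that a grid of mesh $s$ has $O(V/s^d)$ nodes inside a region of volume $V$ (up to a dimensional constant absorbed into $C^d$). Thus the whole argument reduces to bounding the volume of the region actually occupied by the active nodes and its fine-grid dilation. The geometric input is \Cref{distance_beta}, which confines the relevant sublevel sets to a thin tube around the finite union of optimal paths $\Gamma^*$, whose total Euclidean length is bounded in terms of $D_\Omega,\upperboundf,\lowerboundf$.

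For the coarse part, since $\Omegab$ has diameter $D_\Omega$ the grid $X^H$ has $O((D_\Omega/H)^d)=\widetilde{O}(C^d/H^d)$ nodes, and both coarse sweeps together with their fast-marching bookkeeping store a number of values of this order, giving the term $C^d/H^d$. For the active nodes I would first pass from the discrete selection criterion to a continuous sublevel set: combining the error bound $|\F_{V^H}-\F_v|\le C_\gamma H^\gamma$ from \eqref{1ineq} with the Lipschitz continuity of $\F_v$, every $x^H\in O^H_\eta$ satisfies $\F_v(x^H)\le v^*+\eta'$ with $\eta'=\eta_H+2C_\gamma H^\gamma+L_v H$. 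Because $\eta_H$ obeys the condition of \Cref{lemmaOH}, using $\gamma\le 1$ and $H\le D$ one gets $H^\gamma$ and $H$ both of order at most $\eta_H$, hence $\eta'\le C'\eta_H$ and $O^H_\eta\subset\overline{\Od_{C'\eta_H}}$. Applying \Cref{distance_beta} places this set in a tube of radius $r=C_\beta(C'\eta_H)^\beta$ about $\Gamma^*$, whose volume is $O(r^{d-1})$, so $|O^H_\eta|=\widetilde{O}(C^d\,\eta_H^{\beta(d-1)}/H^d)$.

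The fine grid $G^h_\eta$ is obtained by attaching to each active node a box of $\ell^\infty$-radius $\max(H-h,h)\le H$; equivalently it is contained in the enlarged tube of radius $r+C\sqrt d\,H$ about $\Gamma^*$. The decisive quantitative point here is that $\eta_H^\beta$ is of order at least $H^{\gamma\beta}$, which (since $\gamma\beta\le 1$ and $H\le D$) dominates $H$, so the enlargement does not change the order of the radius and the fine region still has volume $O(r^{d-1})=O(\eta_H^{\beta(d-1)})$; counting fine nodes gives $|G^h_\eta|=\widetilde{O}(C^d\,\eta_H^{\beta(d-1)}/h^d)$, and summing the two contributions yields \eqref{e-complexity}. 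I expect the main obstacle to be exactly this uniform passage from the discrete objects $O^H_\eta$ and $G^h_\eta$ to the continuous tube volume: one must verify that the tube radius $r$ dominates both mesh steps, so that the node count is genuinely proportional to the volume rather than inflated by boundary layers of width $H$ or $h$, and that the numerous overlapping boxes defining $G^h_\eta$ can be absorbed into the single factor $C^d$ without degrading the $\eta_H^{\beta(d-1)}$ scaling.
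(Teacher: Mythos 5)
Your proposal is correct and follows essentially the same route as the paper's proof: for the dominant fine-grid term, both arguments place the selected nodes in a tube of radius $O((\etaH)^\beta)$ around $\Gamma^*$ by combining the discretization error bound with the Lipschitz continuity of $\F_v$ and \Cref{distance_beta}, and both hinge on the same quantitative point you single out, namely that $(\etaH)^\beta \geq C_\eta^\beta H^{\gamma\beta}$ dominates $H \geq h$ up to constants (using $\gamma\beta\leq 1$ and $H\leq D$), so the node count is genuinely proportional to the tube volume --- the paper formalizes your volume heuristic by covering $\Gamma^*$ with about $D_\Gamma/(C'(\etaH)^\beta)$ balls of radius $O((\etaH)^\beta)$ and packing disjoint balls of radius $h/2$ centered at the fine-grid nodes. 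The only substantive difference is the coarse term: you use the trivial bound $|X^H| = O((D_\Omega/H)^d)$, whereas the paper exploits the monotonicity of the partial fast-marching sweeps and the speed bounds $\lowerboundf,\upperboundf$ to confine the accepted coarse nodes to a ball of radius $R$ depending on $D$ and $\upperboundf/\lowerboundf$ (sharper when $D$ is much smaller than $D_\Omega$, and reducing to $R=D_\Omega$ otherwise); since the proposition lets $C$ depend on $D_\Omega$, your cruder bound still yields \eqref{e-complexity}.
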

\begin{proof}
  Up to a multiplicative factor, in the order of $d$ (so which enters in the 
$\widetilde{O}$ part) the space complexity is equal to the total
  number of nodes of the coarse and fine grids.

  We first show that up to a multiplicative factor, the first term
  in~\eqref{e-complexity},  $(\frac{C}{H})^{d}$, is the number of accepted nodes in the coarse-grid.
  To do so, we exploit 
  the monotone property of the fast-marching update operator. Recall that in the coarse grid, we incorporate dynamically new nodes by partial fast-marching, starting from $\sourceset\cap X^H$ (resp. $\destset\cap X^H$) until $\destset\cap X^H$ (resp. $\sourceset\cap X^H$) is accepted. 
  
  Let us first consider the algorithm starting from $\sourceset$. In this step, let us denote $\dest^f$ the last accepted node in $\destset$, then we have for all the nodes $x^{H}\in A\fromsource^H$ that have been accepted, $V^H\fromsource(x^{H}) \leq V^H\fromsource(\dest^f)$. Then, by~\Cref{assup_error}, we obtain  $v\fromsource(x^{H}) \leq v\fromsource(\dest^f)+2 C\fromsource H^\gamma \leq v\fromsource(\dest^f)+2 C\fromsource D^\gamma$,
 which gives the following inclusion, when $D$  and $T\fromsource(\dest^f)$ are
small enough: 
  \begin{align*}
A\fromsource^H
&\subseteq \{ x\in \Omegab \mid v\fromsource(x) \leq v\fromsource(\dest^f) +2C\fromsource H^\gamma\} \\
&\subseteq \{ x\in \Omegab \mid T\fromsource(x) \leq T\fromsource(\dest^f) -\log(1-2C\fromsource D^\gamma e^{T\fromsource(\dest^f)}) \} \ . 
\end{align*}

Let $x\in \Omegab$,  recall that $T\fromsource(x)$ is the minimum time traveling from $x$ to $\sourceset$, then we have for some $\source^i \in \sourceset$,
  \begin{equation}\label{leftside}
  	\| x - \source^i \| \leq \int_{0}^{T\fromsource(x)} \|\dot{x}(t)\| dt \leq \upperboundf T\fromsource(x) \ .
  	\end{equation}
  Moreover, we have for some $\source^j \in \sourceset$,
  \begin{equation}\label{rightside}
  	T\fromsource(\dest^f) \leq \frac{\| \dest^f - \source^j \|}{\lowerboundf} \leq \frac{D}{\lowerboundf}\ ,
  	\end{equation}
  since we can take a control $\alpha$ proportional to $\dest^f - \source^j$, so that the trajectory given by \eqref{dynmsys_t} follows the straight line from $\source^j$ to $\dest^f$ (with variable speed). Combine \eqref{leftside} and \eqref{rightside}, we have for the set of accepted nodes:
 \begin{align*}
A\fromsource^H &\subseteq \{x \mid  \| x- \source^i\| \leq ({\upperboundf D}/{\lowerboundf})
-\upperboundf  \log(1-2C\fromsource D^\gamma e^{{\upperboundf D}/{\lowerboundf}})  , \;  \text{for some} \ \source^i \in \sourceset \} \ .
\end{align*}
  Thus, all the nodes we visit are included in a $d$-dimensional ball with radius $R$, where $R$ is a constant depending on $D$, ${\upperboundf}$, ${\lowerboundf}$, $C\fromsource$ and $\gamma$, when $D$ is small enough.
Otherwise, since $\Omegab$ has a diameter equal to $D_\Omega$, one can take $R=D_\Omega$.
Then, the total number of nodes that are accepted in the coarse grid  is bounded by $(\frac{C}{H})^{d}$, in which $C/R$ is a positive constant in the order of 
$(\upsilon_d)^{1/d}$, where $\upsilon_d$ is the volume of unit ball in $\R^d$,
and satisfying $C/R\leq 2$, so we can take $C/R=2$. 
  The same result can be obtained for the search starting from $\destset$.

  We now show that, still up to a multiplicative factor,
  the second term in~\eqref{e-complexity},
  $D\frac{(\eta_H)^{\beta(d-1)}}{h^d}$, is the number of nodes of the fine-grid. 
  Consider a node $x^{h} \in G^{h}_{\eta}$. By definition, there exists $x^H\in O^H_\eta$ such that $\|x^h-x^H\|\leq H$. 
	 Denote $C_\gamma = C\fromsource+C\todest$, and $L_v = L_{v\fromsource}+L_{v\todest}$ (the sum of the Lipschitz constants of $v\todest$ and $v\fromsource$).
Then, using similar arguments as in the proof of \Cref{lemmaOH}, we obtain:
\begin{align*}
\F_{v}(x^h) &\leq \F_v(x^H)+L_v \|x^h-x^H\| 
 \leq \F_{V^H}(x^H)+C_\gamma H^\gamma+ L_v H\\
&\leq  \min_{y^H\in X^H} \F_v(y^H)+\eta_H+ C_\gamma H^\gamma+L_v H
\leq  \min_{y\in \Omega } \F_v(y)+\eta_H+ C_\gamma H^\gamma+2 L_v H\enspace .
\end{align*}

This entails that $x^h \in \mathcal{O}_{\eta_H + \epsH }$,
with $\epsH=C_\gamma H^\gamma+2 L_v H$. 
Since $\gamma \leq 1$, so $\epsH$ is of order $H^\gamma$, 
and $\etaH\geq C_\eta H^\gamma$ by assumption,
then using \Cref{distance_beta}, 
we deduce that for some positive constant $C'$, depending on 
$\beta,\; \gamma$, $C_\beta$, $C_\gamma$, and $L_v$, 
 there exists $x^*\in \Gamma^*$
such that 
\begin{equation}\label{nodedistance}
	\|x^{h}-x^* \| \leq C' (\etaH)^\beta \enspace.
\end{equation}
We assume now that $\Gamma^*$ consists of a single optimal path
from $\source\in\sourceset$ to $\dest\in\destset$.
Indeed, the proof for the case of a finite number of paths is similar
and leads to a constant factor in the complexity, which 
is equal to the number of paths.
Up to a change of variables, we get a parametrization of $\Gamma^*$ as the image of a one-to-one map $\phi: t\in [0,D_{\Gamma}]\mapsto \phi(t)\in \Gamma^*$, 
with unit  speed $\|\phi'(t)\|=1$, where $D_\Gamma$ is a positive constant.
Let us denote in the following by $d_{\Gamma^*}(x,y)$ the distance between two points $x,y \in \Gamma^*$ along this path,
that is $d_{\Gamma^*}(x,y)=|t-s|$ if $x=\phi(t)$ and $y=\phi(s)$, with $t,s\in  [0,D_{\Gamma}]$. Since the speed of $\phi$ is one,
we have $\|x-y\|\leq d_{\Gamma^*}(x,y)$, and so $D\leq D_\Gamma$.
Moreover, by the same arguments as 
above, we have $D_\Gamma\leq \upperboundf D/\lowerboundf$.

Let us divide  $\Gamma^*$, taking equidistant points $x_0, x_1,x_2,...,x_N,x_{N+1} \in \Gamma^*$ between $x_0=\source$ and $x_{N+1}= \dest$, with $N = \lfloor 
D_\Gamma 
 /( C'(\eta_H)^\beta )\rfloor$, so that 
$d_{\Gamma^*}(x_{k},x_{k+1}) = D_\Gamma/ (N+1) \leq C' (\eta_H)^\beta\; \forall k \in \{ 0, 1,2,...,N \}$. 
Set $\Gamma^*_{\mathrm{dis}}:= \{ \source, x_1,x_2,...,x_N, \dest \}$.
Then, by \eqref{nodedistance}, we have for every $x^h \in G^h_\eta$, there exists a point $x \in \Gamma^*_{\mathrm{dis}}$ such that
\( \|x^h - x  \| \leq \frac{3}{2} C' (\eta_H)^\beta \). 
Let us denote $B^d(x,r)$ the $d$-dimensional open ball with center $x$ and radius $r$ (for the Euclidean norm).
Taking, $\Delta := \frac{3}{2}C' (\etaH)^\beta  + \frac{h}{2}$, we deduce:
\[ \bigcup_{x \in G^h_\eta}B^d(x,\frac{h}{2}) \subset \bigcup_{x\in \Gamma^*_{\mathrm{dis}}} B^d(x,\Delta)  \ .\]
Moreover, since the mesh step of $G^h_\eta$ is $h$, all balls centered in $x\in G^h_\eta$ with radius $\frac{h}{2}$ are disjoint,
which entails:
\[\operatorname{Vol} \Big(\bigcup_{x \in O^h_\eta}B^d(x,\frac{h}{2})\Big) = |G^h_\eta| (\frac{h}{2})^d \upsilon_d \ , \]
where $\upsilon_d$ denotes the volume of the unit ball in dimension $d$, and $|G^h_\eta|$ denotes the cardinality of $G^h_\eta$, which is also the number of nodes in the fine grid. Thus, we have:
\[
|G^h_\eta| = \frac{\operatorname{Vol} \Big(\bigcup_{x \in O^h_\eta}B^d(x,\frac{h}{2})\Big)}{(\frac{h}{2})^d \upsilon_d} \leq \frac{\operatorname{Vol}\Big( \bigcup_{x\in \Gamma^*_{\mathrm{dis}}} B^d(x,\Delta) \Big)}{ (\frac{h}{2})^d \upsilon_d} \leq |\Gamma^*_{\mathrm{dis}}| 2^d \frac{\Delta^d}{h^d} \enspace .
\]
Since $\etaH\geq C_\eta H^\gamma$, $h\leq H$ and $\beta,\gamma\leq 1$, we get that  
$\Delta\leq C'' (\etaH)^\beta$ for some constant $C''$ depending on $C_\eta$, 
$C'$, $\beta$ and $\gamma$.
Then, 
\( |G^h_\eta|  \leq D' \myfrac{(2C''(\etaH)^{\beta})^{d-1} }{h^d}\), 
where $D'$ depends on $D_\Gamma$, $D$ and $C'$.
This leads to the bound of the proposition.
\end{proof} 
 \begin{remark}
For the fast marching method with semi-lagrangian scheme, the computational complexity $\computcom$  satisfies  $\computcom=\widetilde{O}(3^d \spacecom)$.
Then, the same holds for the two-level or multi-level fast marching 
methods. %
In particular for the  two-level fast marching method
$\computcom$ has same estimation as $\spacecom$ in \Cref{complexity-2level}.
 	\end{remark}

The same analysis as in two level case also works for the $N-$level case, for which we have the following result:
\begin{proposition}\label{prop-multi-complexity}
Assume that $H_1\leq D$, and that, 
for $l=1,\ldots, N-1$, $\eta_l$ satisfies the condition of \Cref{theo-corr-multi}, and $\eta_l\leq D$.
Then, 	the total computational complexity $\computcom (H_{1},H_{2},\dots,H_{N})$ of the multi-level fast marching algorithm with $N$-levels, with grid mesh steps $H_{1} \geq H_{2} \geq ,\dots, \geq H_{N-1} \geq H_{N} = h$ is 
	\begin{equation}\label{com_ml}
		\computcom (\{H_l\}_{1 \leq l \leq N}) = \widetilde{O}\Big( C^d \Big( \frac{1}{(H_{1})^d} + \frac{(\eta_1)^{\beta(d-1)}}{(H_2)^d}+ \frac{(\eta_2)^{\beta(d-1)}}{(H_3)^d}+ \dots+ \frac{(\eta_{N-1})^{\beta(d-1)}}{h^d}\Big)\Big)\enspace ,
	\end{equation}
with $C$ as in \Cref{complexity-2level}.
Moreover, the space complexity has a similar formula.\hfill \qed
\end{proposition}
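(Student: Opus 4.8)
The plan is to reduce the computational complexity to the total number of grid nodes visited across all levels, and then to bound that node count level by level, telescoping the two-level estimate of \Cref{complexity-2level}. First I would recall that each partial fast-marching run on a grid with $M$ nodes costs $\widetilde{O}(K_d M \log M)$ arithmetic operations, where $K_d$ is the stencil size (for instance $3^d$ for the semi-Lagrangian scheme). Absorbing $K_d$ into the factor $C^d$ and $\log M$ into the $\widetilde{O}(\cdot)$ notation, the total cost is $\widetilde{O}(C^d)$ times the total number of nodes of the grids $X^{H_1}, G^{H_2}_{\eta_1}, \dots, G^h_{\eta_{N-1}}$; it therefore suffices to bound the cardinality of each of these grids. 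Since this argument merely counts nodes, the space complexity obeys the same bound, which yields the last sentence of the statement.

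The level-$1$ term $C^d/(H_1)^d$ is bounded exactly as in the first part of the proof of \Cref{complexity-2level}: by the monotone (causal) property of the update operator, all accepted nodes in $X^{H_1}$ lie in a Euclidean ball of radius $R$ depending only on $D$, $\upperboundf$, $\lowerboundf$, $C\fromsource$, $C\todest$ and $\gamma$ (and $R=D_\Omega$ otherwise), so their number is $\widetilde{O}((C/H_1)^d)$. For each level $l\in\{2,\dots,N\}$, the grid is the $H_{l-1}$-neighborhood $G^{H_l}_{\eta_{l-1}}$ of the active nodes $O^{H_{l-1}}_{\eta_{l-1}}$. Following the second part of the proof of \Cref{complexity-2level}, every $x\in G^{H_l}_{\eta_{l-1}}$ satisfies $\F_v(x)\le \min_{y\in\Omega}\F_v(y)+\eta_{l-1}+\epsH$ with $\epsH=C_\gamma(H_{l-1})^\gamma+2L_v H_{l-1}$, i.e.\ $x\in\mathcal{O}_{\eta_{l-1}+\epsH}$. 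Here the bound on $\F_{V^{H_{l-1}}}$ is legitimate on the restricted grid because \Cref{theo-corr-multi} guarantees, by its own induction, that the approximations at level $l-1$ are $O((H_{l-1})^\gamma)$-accurate near $\Gamma^*$; and since the choice $\eta_{l-1}=C_\eta(H_{l-1})^\gamma$ dominates $\epsH$, one has $\eta_{l-1}+\epsH=O(\eta_{l-1})$. Invoking \Cref{distance_beta}, each such $x$ then lies within distance $O((\eta_{l-1})^\beta)$ of $\Gamma^*$, exactly as in \eqref{nodedistance}.

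It remains to repeat the packing argument of \Cref{complexity-2level} at level $l$: covering the (single, or finitely many) geodesic path(s) by equidistant points spaced $O((\eta_{l-1})^\beta)$ apart, noting that $D_\Gamma\le \upperboundf D/\lowerboundf$, and using that the $\frac{H_l}{2}$-balls centered at the nodes of $G^{H_l}_{\eta_{l-1}}$ are disjoint, I obtain $|G^{H_l}_{\eta_{l-1}}|=\widetilde{O}\big(C^d (\eta_{l-1})^{\beta(d-1)}/(H_l)^d\big)$. Summing the level-$1$ term together with these $N-1$ terms and multiplying by $\widetilde{O}(C^d)$ yields \eqref{com_ml}, and the space complexity follows identically.

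The main obstacle will be uniformity across levels: the per-level tube estimate relies on the active set at level $l-1$ being confined to $\mathcal{O}_{O(\eta_{l-1})}$, which in turn requires the restricted-grid error bounds of \Cref{assup_error} to hold at every intermediate level with constants $C\fromsource, C\todest, C_\gamma, L_v$ independent of $l$. This is precisely what the induction in \Cref{theo-corr-multi} supplies under its hypotheses $\eta_l=C_\eta(H_l)^\gamma$ and $H_l/H_{l+1}\ge\kappa$; hence the work is to check that these hypotheses are compatible with the packing argument at each level (so that $\eta_{l-1}$ indeed dominates $\epsH$ and the tube has radius $O((\eta_{l-1})^\beta)$). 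Once this is verified, the estimate is a clean telescoping of the two-level bound.
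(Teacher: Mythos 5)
Your proposal is correct and follows exactly the route the paper intends: the paper omits the proof entirely (stating only that ``the same analysis as in the two level case also works for the $N$-level case''), and your argument is precisely that telescoping of \Cref{complexity-2level} — ball/packing bound for the level-$1$ grid, tube containment $G^{H_l}_{\eta_{l-1}}\subset\mathcal{O}_{\eta_{l-1}+\epsH}$ plus \Cref{distance_beta} and the packing argument at each subsequent level, with \Cref{theo-corr-multi} supplying the accuracy of the intermediate restricted-grid approximations needed to justify the active-node inequalities. Your closing remark on cross-level uniformity correctly identifies the only delicate point and resolves it the way the paper's framework does.
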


Minimizing the expressions in \Cref{complexity-2level}
 and \Cref{prop-multi-complexity}, 
we obtain the following computational complexity.

\begin{theorem}\label{complexity_1}
Assume $d\geq 2$, and let $\nu:=\gamma\beta (1-\frac{1}{d})<1$.
Let $\varepsilon>0$, and choose $h = (C_\gamma^{-1} \varepsilon)^{\frac{1}{\gamma}}$.
Then, there exist some constant $C_m$ depending on the same parameters as 
in \Cref{complexity-2level}, such that,
in order to obtain an error bound on the value of Problem \eqref{problem} 
less or equal to $\varepsilon$ small enough,
one can use one of the following methods:
	\begin{enumerate}
		\item\label{complexity_i} The two-level  fast marching method with $ \eta_H =C_\eta H^ \gamma$, and 
$H = h^{\frac{1}{\nu+1}}$.
In this case, the total computational complexity is $\computcom(H,h) = \widetilde{O}((C_m)^d (\frac{1}{\varepsilon})^{\frac{d}{ \gamma (\nu+1)}} )$. 
		\item\label{complexity_ii}
The $N-$level fast marching method
with $\eta_l = C_\eta H^\gamma_l$ and $H_l = h^{ \frac{1 - \nu^l }{ 1 - \nu^N } }$, for $l=1,\dots,N-1$. In this case,  the total computational complexity is  $\widetilde{O}(N (C_m)^d (\frac{1}{\varepsilon})^{ \frac{1 - \nu}{ 1 - \nu^N }\frac{d}{\gamma} }  )$. %
		\item\label{complexity_iii} The $N-$level fast marching method 
with  $N=\lfloor \frac{1}{\gamma}  \log(\frac{1}{\varepsilon}) \rfloor$, 
and  $\eta_l = C_\eta H_l^\gamma$ and $H_l =h^{\frac{l}{N}}$,
for $l=1,\dots,N-1$. 
 Then, the total computational complexity reduces to $\widetilde{O}((C_m)^d (\frac{1}{\varepsilon})^{(1-\nu)\frac{d}{\gamma}})=\widetilde{O}((C_m)^d (\frac{1}{\varepsilon})^{\frac{1+(d-1)(1-\gamma \beta)}{\gamma}})$.
When $\gamma=\beta=1$, it reduces to $\widetilde{O}((C_m)^d \frac{1}{\varepsilon})$.
		\end{enumerate}
	\end{theorem}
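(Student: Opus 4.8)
The plan is to treat this purely as a discrete optimization over the mesh steps, starting from the complexity formulas already established in \Cref{complexity-2level} and \Cref{prop-multi-complexity}. First I would substitute the prescribed relation $\eta_l = C_\eta (H_l)^\gamma$ into \eqref{com_ml}, absorbing the resulting powers of $C_\eta$ into the $\widetilde{O}(C^d)$ prefactor. The decisive observation is the algebraic identity $\gamma\beta(d-1) = d\nu$, immediate from $\nu = \gamma\beta(1-\tfrac1d)$. With it, every summand of \eqref{com_ml} (and of \eqref{e-complexity}) becomes, up to $\widetilde{O}(C^d)$, of the form $\big((H_l)^{\nu}/H_{l+1}\big)^d$, with the conventions $H_0 = 1$ and $H_N = h$. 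The total complexity is therefore, up to $\widetilde{O}(C^d)$, the sum $\sum_{l=0}^{N-1}\big((H_l)^{\nu}/H_{l+1}\big)^d$, and the whole statement reduces to evaluating this sum for the three prescribed choices of mesh steps.

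For Points (i) and (ii) I would use the balancing principle: impose that all summands equal a common value $T^d$, i.e.\ $(H_l)^\nu/H_{l+1} = T$ for every $l$. Passing to logarithms $u_l := \log H_l$ turns this into the affine recurrence $u_{l+1} = \nu u_l - \log T$ with $u_0 = 0$, whose solution is $u_l = -\log T\cdot\frac{1-\nu^l}{1-\nu}$. The terminal condition $u_N = \log h$ then fixes $\log T = -\frac{1-\nu}{1-\nu^N}\log h$ and yields exactly $H_l = h^{(1-\nu^l)/(1-\nu^N)}$, the choice announced in (ii); specializing to $N=2$ gives $H_1 = h^{1/(1+\nu)}$, which is (i). Since the $N$ terms are then all equal to $T^d = h^{-d(1-\nu)/(1-\nu^N)}$, the sum is $N\,T^d$, and substituting $h = (C_\gamma^{-1}\varepsilon)^{1/\gamma}$ (so that $h^{-1}$ is, up to a constant folded into $C_m$, of order $\varepsilon^{-1/\gamma}$) converts this into $\widetilde{O}((C_m)^d \varepsilon^{-d/(\gamma(\nu+1))})$ for $N=2$ and $\widetilde{O}(N(C_m)^d \varepsilon^{-\frac{1-\nu}{1-\nu^N}\frac{d}{\gamma}})$ in general.

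For Point (iii) I would instead insert the geometric choice $H_l = h^{l/N}$, $N = \lfloor \tfrac1\gamma\log\tfrac1\varepsilon\rfloor$, directly into $\sum_{l=0}^{N-1}\big((H_l)^\nu/H_{l+1}\big)^d$. Here the $l$-th summand is the power $h^{(d/N)[(\nu-1)l-1]}$ of $h$; since $\nu<1$ and $\log h<0$ these increase geometrically in $l$, so the sum is within a factor $N$ of the largest term $h^{-d(1-\nu)-d\nu/N}$, attained at $l=N-1$. The crux is that $N\sim-\log h$ forces $h^{-d\nu/N}=\exp(-\frac{d\nu}{N}\log h)$ to be of order $e^{d\nu}=(e^\nu)^d$, a constant in $\varepsilon$ that is absorbed into $(C_m)^d$, while the remaining factor $h^{-d(1-\nu)}$ gives $\varepsilon^{-d(1-\nu)/\gamma}$ and the logarithmic factor $N$ disappears into $\widetilde{O}$. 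The elementary identity $\frac{d(1-\nu)}{\gamma} = \frac{1+(d-1)(1-\gamma\beta)}{\gamma}$ yields the final form, and $\gamma=\beta=1$ gives $\widetilde{O}((C_m)^d\varepsilon^{-1})$. In all three cases the error bound is for free: $h=(C_\gamma^{-1}\varepsilon)^{1/\gamma}$ gives $C\todest h^\gamma\le\varepsilon$, and \Cref{theo-corr-multi} guarantees that the final approximation on the geodesic points has error at most $C\todest h^\gamma$.

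The routine but delicate part is the asymptotic bookkeeping in (iii)---certifying that the spurious factor $h^{-d\nu/N}$ is genuinely $O((e^\nu)^d)$ and that the number of levels contributes only a logarithmic factor---together with checking that the prescribed mesh steps actually meet the hypotheses of \Cref{theo-corr-multi}, namely $\eta_l=C_\eta(H_l)^\gamma$ and the ratio condition $H_l/H_{l+1}\ge\kappa$. For the geometric choice this ratio is $h^{-1/N}\ge e$, which must be reconciled with the problem-dependent $\kappa$ by taking $C_\eta$ large enough; for the balanced choices the ratios grow as $\varepsilon\to0$ and pose no difficulty. Beyond these verifications everything is elementary calculus once the reduction to $\sum_l\big((H_l)^\nu/H_{l+1}\big)^d$ is in place, so I expect no genuine analytic obstacle, only careful constant-tracking.
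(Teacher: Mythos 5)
Your proposal is correct, and it follows the same skeleton as the paper's proof: substitute $\eta_l = C_\eta H_l^\gamma$ into the complexity bounds of \Cref{complexity-2level} and \Cref{prop-multi-complexity}, evaluate them for the prescribed mesh steps, absorb constants into $(C_m)^d$, and get the error bound from $C_\gamma h^\gamma = \varepsilon$ via \Cref{lemmaOH} and \Cref{theo-corr-multi}. The one genuine difference is how the mesh-step formulas are treated. The paper \emph{derives} them by minimizing the complexity as a function of $H_1,\dots,H_{N-1}$ (calculus plus induction on $N$), which produces an optimal $H$ carrying a multiplicative constant $C_1$ as in \eqref{minim_H_1} that must then be argued away; you instead impose the balancing condition $(H_l)^\nu/H_{l+1}=T$, solve the affine recurrence in $\log H_l$, and observe that the prescribed steps make all $N$ summands equal, so the sum is exactly $NT^d$. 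Since the theorem only asserts the complexity for the prescribed steps, this direct verification suffices and is cleaner (no constant $C_1$ to absorb); what it does not give, and the paper's minimization does, is the side information that these steps are essentially optimal choices. Your treatment of Point (iii) coincides with the paper's computation \eqref{com_ml_3}, including the observation that $h^{-d\nu/N}$ is a constant to the power $d$ once $N\sim\frac1\gamma\log\frac1\varepsilon$. Finally, you explicitly check that the prescribed steps meet the hypotheses of \Cref{theo-corr-multi} — in particular that the geometric ratio $H_l/H_{l+1}=h^{-1/N}\approx e$ must dominate the problem-dependent $\kappa$, handled by the choice of $C_\eta$, while for the balanced steps the ratios diverge as $\varepsilon\to 0$ — a verification the paper leaves implicit, and a welcome addition.
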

\begin{proof}
  For~\eqref{complexity_i}, 
by~\Cref{assup_error} 
together with \Cref{lemmaOH},
which applies since $ \eta_H = C_\eta H^ \gamma$,
 we get that the error on the value obtained by the
 two-level  fast marching method 
is less or equal to $C_\gamma h^\gamma=\varepsilon$.
Note that in order to apply 
\Cref{lemmaOH},
$\eta_H$ needs to satisfy $ \eta_H \geq C_\eta H^ \gamma$. 
Then, to get a minimal computational complexity, one need
to take $ \eta_H =C_\eta H^ \gamma$ as in the theorem.
We obtain the following total computational complexity: 
  \begin{equation}\label{com_2l_1}
	\computcom (H,h) = \widetilde{O}( (C')^{d} (H^{-d} + {h^{-d}} H^{ \gamma \beta(d-1)} ))\enspace,
\end{equation}
for some new constant $C'=C\max(1, C_\eta)^{\beta}$.
When $h$ is fixed, this is a function of $H$ which gets its minimum value for
\begin{equation}\label{minim_H_1}
	H = 
C_1  h^{\frac{d}{(\gamma \beta+1)d-\gamma \beta}} \quad \text{with}\quad C_1=\big(\frac{d}{ \gamma \beta(d-1)}\big)^{\frac{1}{(\gamma \beta + 1)d- \gamma \beta}}\enspace,
	\end{equation}
since it is decreasing before this point and then increasing.
Then, the minimal computational complexity bound is obtained by 
substituting the value of  $H$ of \eqref{minim_H_1} in \eqref{com_2l_1}.
The formula of $H$ in \eqref{minim_H_1}  is as in~\eqref{complexity_i}, up to the multiplicative factor $C_1>0$. One can show that $1\leq C_1\leq C_2$ with $C_2$ depending only on $\gamma\beta$.
Hence, substituting this value of $H$ instead of the one of \eqref{minim_H_1}
in  \eqref{com_2l_1},
gives the same bound up to the multiplicative factor $C_1^d$.
So in both cases, using
$h = (C_\gamma^{-1} \varepsilon)^{\frac{1}{\gamma}}$,
we obtain a computational complexity bound as in~\eqref{complexity_i},
for some constant $C_m$ depending on the same parameters as 
in \Cref{complexity-2level}. 
  
  For~\eqref{complexity_ii}, 
using this time~\Cref{assup_error} 
together with \Cref{theo-corr-multi}, 
and that $\eta_l = C_l H_l^{\gamma}$, we get that the error on the value obtained by the multi-level fast marching method 
is less or equal to $C_\gamma h^\gamma=\varepsilon$.
As in~\eqref{complexity_i}, we 
apply the formula of \Cref{prop-multi-complexity}
 with $\eta_l = C_l H_l^{\gamma}$, which gives 
  \begin{equation}\label{com_ml_2}
  	\begin{aligned}
  \computcom&(\{H_l\}_{1\leq l \leq N})  \\
  & = \widetilde{O}\Big((C')^d ((H_1)^{-d}  +(H_1)^{\gamma \beta(d-1)} (H_2)^{-d} + \dots + (H_{N-1})^{ \gamma \beta(d-1)} h^{-d}  \Big) \ ,
  \end{aligned}
  \end{equation}
for the same constant $C'$ as above.
This is again a function of $H_1, H_2,\dots,H_{N-1}$ when $h$ is fixed. We deduce the optimal mesh steps $\{ H_1, H_2,\dots,H_{N-1} \}$ by taking the minimum of $\computcom(\{H_l\}_{1\leq l \leq N})$ with respect to $H_1, H_2,\dots,H_{N-1}$,
and then simplifying the formula by eliminating the constants. 
We can indeed proceed by induction on $N$, and use iterative formula 
similar to \eqref{minim_H_1}. 
We then obtain the formula for $H_l$ as in~\eqref{complexity_ii}.  Substituting these values of the $H_l$ into \eqref{com_ml_2}, 
for a general $d\geq 2$, we obtain the following bound on the 
total computational complexity 
\begin{equation}\label{comp-N}
	\computcom(\{H_l\}_{1\leq l \leq N}) =\widetilde{O}(N (C')^d (\frac{1}{h})^{ \frac{1-\nu}{1-\nu^N} d})
	\enspace .\end{equation}
Now using $h = (C_\gamma^{-1} \varepsilon)^{\frac{1}{\gamma}}$, 
we obtain~\eqref{complexity_ii}.

For \eqref{complexity_iii}, 
let us  first do as if $\nu =1$.
 In that case, passing to the limit when $\nu$ goes
to $1$ in previous formula, we obtain the new formula for the $H_l$ given in
 \eqref{complexity_iii}.
We also obtain a new formula for the total complexity in \eqref{comp-N}
of the form $\widetilde{O}(N (C')^d (\frac{1}{h})^{ \frac{d}{N}})$.
The minimum of this formula with respect to $N$ is obtained for
$N = d\log(\frac{1}{h})$ which with
 $h = (C_\gamma^{-1} \varepsilon)^{\frac{1}{\gamma}}$, 
leads to a formula of $N$ in the order of  the one of~\eqref{complexity_iii}.
Let us now substitute the values of $H_l$ into the complexity formula~\eqref{com_ml_2}, we obtain the total computational complexity (for $h$ small enough)
\begin{equation}\label{com_ml_3}
	\computcom(\{H_l\}_{1\leq l \leq N}) = \widetilde{O}\Big( (C')^d (\frac{1}{h})^{\frac{d}{N}} \sum_{l=0}^{N-1} (\frac{1}{h})^{\frac{l}{N}(1-\nu)d} \Big) = \widetilde{O}\Big( N(C')^d (\frac{1}{h})^{\frac{d}{N}} (\frac{1}{h})^{(1-\nu)d} \Big) \ . 
\end{equation}
Now taking $N = \lfloor \frac{1}{\gamma}  \log(\frac{1}{\varepsilon}) \rfloor$ and using again $h = (C_\gamma^{-1} \varepsilon)^{\frac{1}{\gamma}}$, we get the
formula of~\eqref{complexity_iii}.
\end{proof}

\begin{remark}\label{rk-gamma}
	In~\Cref{complexity_1}, the theoretical complexity bound highly depends on the value of $\gamma \beta$. Recall that $\gamma$ is the convergence rate of fast marching method, as defined in~\Cref{assup_error}. $\beta$,  defined in \Cref{distance_beta}, 
determines the growth of the neighborhood $\mathcal{O}_{\eta}$ of the optimal trajectories, as a function of $\eta$,  
and we already gave there an easy example for which
$\beta=1$, by considering the dynamics with constant speed and a direction chosen from the unit $L^1$ sphere instead of the $L^2$ sphere. 
In particular, one can find examples such that $\gamma=\beta=1$.
	\end{remark}
	
\section{Numerical Experiments}\label{sec-test}

In this section, we present numerical tests on a toy example: Euclidean distance in a box, showing the improvement of our algorithm, compared with the original fast-marching method of Sethian et al.~\cite{sethian1996fast}, using the same update operator. 
Both algorithms were implemented in C++, and executed on a single core of a Quad Core IntelCore I7 at 2.3Gh with 16Gb of RAM.

We start with the easiest case: a constant speed $f(x,\alpha) \equiv 1$ in $\Omega = (0,1)^{d}$. The sets $\sourceset$ and $\destset$ are the Euclidean balls with radius $0.1$, centered at $(0.2,\dots,0.2)$ and $(0.8,\dots,0.8)$ respectively. 
We provide detailed results comparing the performances of
the classical and multi-level fast marching methods. 
In the higher dimension cases, the classical fast marching method 
cannot be executed in a reasonable time. So we fix a time budget of 1 hour.  
\Cref{test1_1} shows the CPU time and the memory allocation in dimensions range from 2 to 6, with grid meshes equal to $\frac{1}{50}$ and $\frac{1}{100}$. 
In all the dimensions, %
we observe that the multi-level fast marching method with finest mesh step $h$ yields the same relative error as the classical fast marching method with mesh step $h$, but with considerably reduced CPU times and memory requirements. 
Moreover, when the dimension is greater than 4, the classical fast marching method could not be executed in a time budget of 1 hour. 
In all, the multi-level method appears to be much less sensitive to the \textit{curse-of-dimensionality}. 

\begin{figure}[htbp]
	\centering 
	\includegraphics[width=0.4\textwidth]{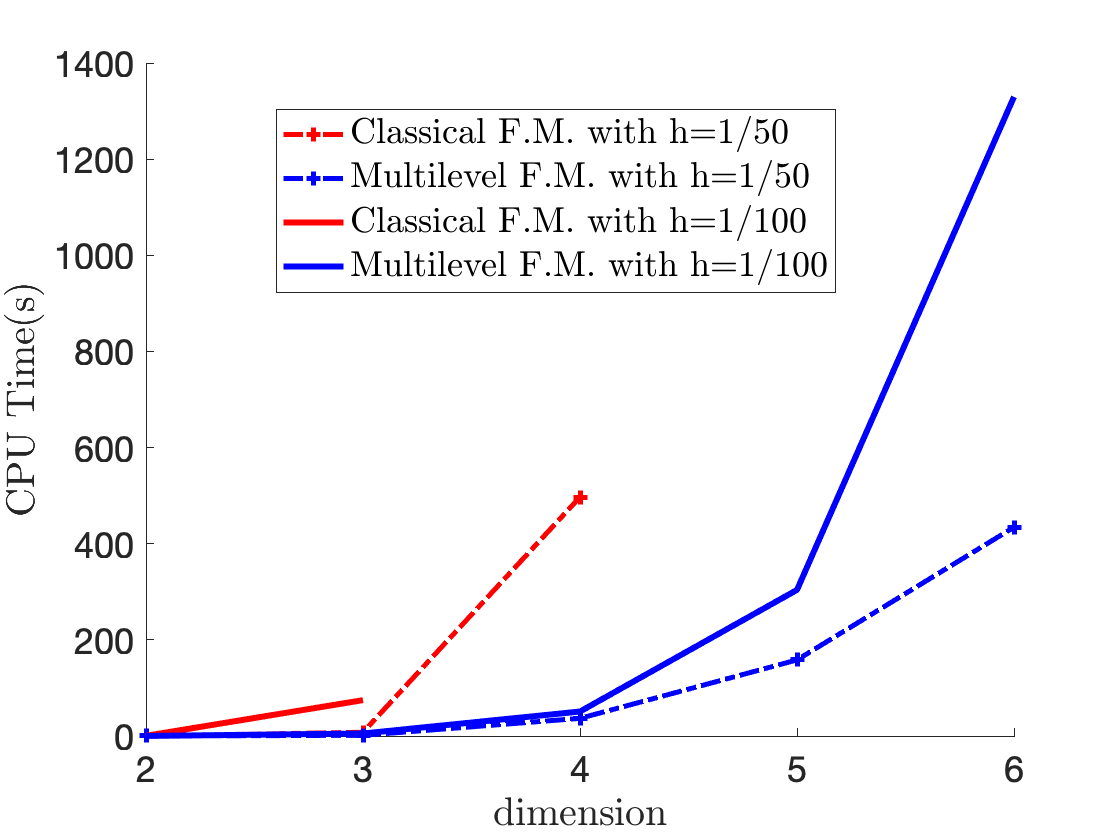} 
	\includegraphics[width=0.4\textwidth]{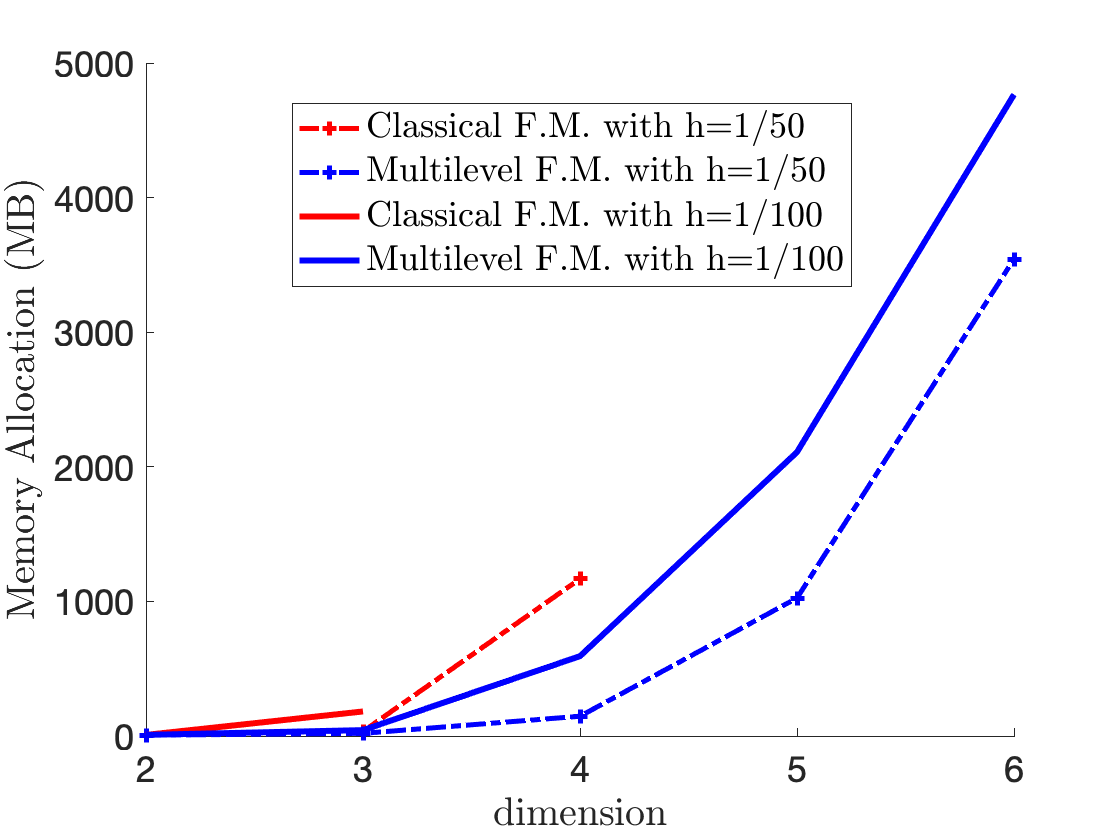}
	\caption{CPU time (left) and memory allocation (right) w.r.t. dimension, $h$ fixed.}
	\label{test1_1}
\end{figure}

To better compare the multi-level method with the classical method, we test the algorithms with several (finest) mesh steps (which are proportional to the predictable error up to some exponent $1/\gamma$), when the dimension is fixed to be $3$, see~\Cref{test1error}. We consider both the 2-level and the multi-level algorithm. In the multi-level case, the number of levels is adjusted to be (almost) optimal.

\begin{figure}[htbp]
	\centering
		\includegraphics[width=0.4\textwidth]{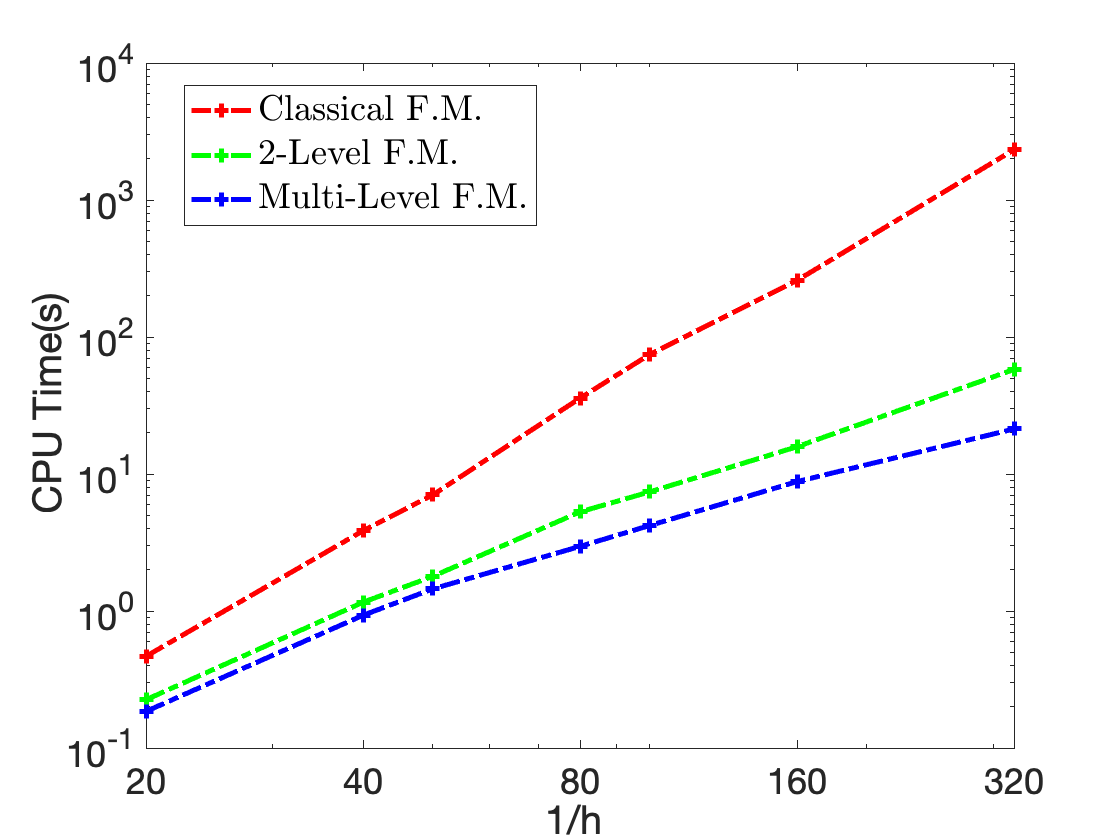}
		\includegraphics[width=0.4\textwidth]{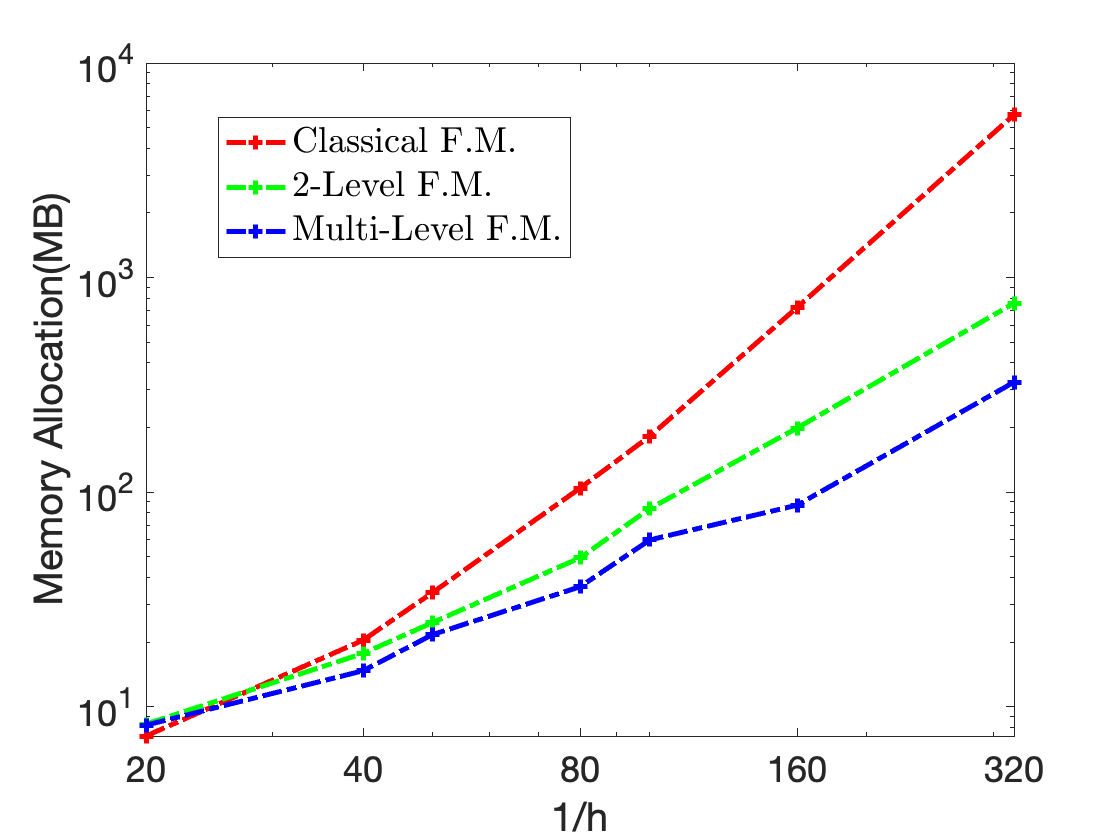}
	\caption{CPU time (left) and memory allocation (right) 
		for several values of the finest mesh step $h$, in dimension $3$, log-log scale.}
	\label{test1error}
\end{figure}

\bibliographystyle{alpha} 

\newcommand{\etalchar}[1]{$^{#1}$}

\end{document}